\documentclass[10pt,leqno]{article} 

   \usepackage[centertags]{amsmath}
   \usepackage{amsfonts}
   \usepackage{amsmath}
   \usepackage{amssymb}
   \usepackage{amsthm}
   \usepackage{newlfont}
   \usepackage[latin1]{inputenc}

 \usepackage{multirow, bigdelim}

\usepackage{color}

\allowdisplaybreaks[3]

\theoremstyle{plain}
\newtheorem{theorem}{Theorem}[section]
\newtheorem{lemma}[theorem]{Lemma}

\theoremstyle{definition}
\newtheorem{definition}[theorem]{Definition}

\theoremstyle{remark}
\newtheorem{remark}[theorem]{Remark}
\newtheorem*{remark*}{Remark}

\numberwithin{equation}{section}

\newcommand{\dosfilas}[2]{
  \ldelim[{2}{2mm}& #1 &\rdelim]{2}{2mm} \\
  & #2 & &  & &
}

\newcommand*\pFqskip{8mu}
\catcode`,\active
\newcommand*\pFq{\begingroup
        \catcode`\,\active
        \def ,{\mskip\pFqskip\relax}%
        \dopFq
}
\catcode`\,12
\def\dopFq#1#2#3#4#5{%
        {}_{#1}F_{#2}\biggl(\genfrac..{0pt}{}{#3}{#4};#5\biggr)%
        \endgroup
}

\newcommand\D{{\mathcal D}}

\newcommand\Hh{{\mathsf h}}
\newcommand\Pp{{\mathsf P}}
\newcommand\M{{\mathcal M}}
\newcommand\ax{{\mathsf{a}}}
\newcommand\bx{{\mathsf{b}}}
\newcommand\Nn{{\hat N}}

\newcommand\CC{{\mathbb C}}

\newcommand\RR{{\mathbb R}}
\newcommand\ZZ{{\mathbb Z}}
\newcommand\NN{{\mathbb N}}
\newcommand\PP{{\mathbb P}}
\newcommand\Aa{{\mathbb A}}
\newcommand\Bb{{\mathbb B}}

\newcommand\supp{\operatorname{Supp}}

\newcommand\sign{\operatorname{sign}}

\newcommand\Sh{\mbox{\Large $\mathfrak {s}$}}

   \parindent 10pt 

   \title{Exceptional Hahn and Jacobi polynomials with an arbitrary number of continuous parameters.
   \footnote{Partially supported by PGC2018-096504-B-C31
(FEDER(EU)/Ministerio de Ciencia e Innovaci\'on-Agencia Estatal de Investigaci\'on),
FQM-262 and Feder-US-1254600 (FEDER(EU)/Jun\-ta de Anda\-lu\-c\'ia).}}
   \author{Antonio J. Dur\'{a}n\\
     \footnotesize
        \  Departamento de An\'{a}lisis Matem\'{a}tico.
       Universidad de Sevilla \\
       \footnotesize Apdo (P. O. BOX) 1160. 41080 Sevilla. Spain.
   duran@us.es \\
          \ \ }
   \date{}
   \begin{document}
   \maketitle

\bigskip

\begin{abstract}
We construct new examples of exceptional Hahn and Jacobi polynomials.
Exceptional polynomials are orthogonal polynomials with respect to a measure which are also eigenfunctions of a second order difference or differential operator.  The most apparent difference between classical or classical discrete orthogonal polynomials and their exceptional counterparts
is that the exceptional families have gaps in their degrees, in the sense that not all degrees are present in the sequence of polynomials. The new examples have the novelty that they depend on an arbitrary number of continuous parameters.
\end{abstract}

\section{Introduction and results}
Exceptional and exceptional discrete orthogonal polynomials $p_n$, $n\in X\varsubsetneq \NN$, with $\NN\setminus X$ a finite set, are complete orthogonal polynomial systems with respect to a positive measure which in addition are eigenfunctions of a second order differential or difference operator, respectively. They extend the  classical families of Hermite, Laguerre and Jacobi or the classical discrete families of Charlier, Meixner and Hahn.

The last decade and a half has seen a great deal of activity in the area of exceptional orthogonal polynomials (see, for instance,
\cite{Be,BK,duch,dume,duha,GFGM,GUKM1,GUKM2} (where the adjective \textrm{exceptional} for this topic was introduced),  \cite{GUGM,GQ,MR,OS3,OS4,STZ}, and the references therein). The most apparent difference between classical or classical discrete orthogonal polynomials and their exceptional counterparts
is that the exceptional families have gaps in their degrees, in the
sense that not all degrees are present in the sequence of polynomials (as it happens with the classical families) although they form a complete orthonormal set of the underlying $L^2$ space defined by the orthogonalizing positive measure.

In all the examples appeared before 2021 apart from the parameters associated to
the classical and classical discrete weights, only discrete parameters appear in the construction of each exceptional family.
This scenario has recently changed. Indeed, in \cite{xle}, M.A. Garc\'\i a Ferrero, D. G\'omez-Ullate and R. Milson have introduced exceptional Legendre polynomials depending on an arbitrary number of continuous parameters.

The purpose of this paper is to construct new examples of exceptional Hahn and Jacobi polynomials depending on an arbitrary number of continuous parameters. We use the same approach than in our previous paper \cite{duch,dume,duha}, and hence we construct new families of exceptional  Hahn polynomials
by dualizing the examples of Krall dual Hahn polynomials introduced in \cite{dundh} and which depend on an arbitrary number of continuous parameters. Krall or Krall discrete polynomials $q_n$, $n\ge 0$, are orthogonal polynomials which are eigenfunctions of a higher order differential or difference operator, respectively. Krall polynomials we introduced  more than eighty years ago when H.L. Krall raised the issue of orthogonal polynomials which are also common eigenfunctions of a higher order differential operator. He obtained a complete classification for the case of a differential operator of order four (\cite{Kr2}).
Since the eighties a lot of effort has been devoted to find Krall polynomials (\cite{du0,du1,dudh,DdI,DdI2,DdI3,DdI4,koekoe,koe,koekoe2,L1,L2,GrH1,GrHH,GrY,Plamen1,Plamen2,Zh}, the list is by no mean exhaustive).

Our starting point is the following example of exceptional Hahn and Jacobi polynomials. For $\alpha,\beta$ real numbers with $\alpha,\beta\not =-1,-2,\cdots$, and $N$ a positive integer let $h_n^{\alpha, \beta, N}$, $P_n^{\alpha, \beta}$ be the $n$-th Hahn and Jacobi polynomial, respectively (see (\ref{hpol}) and (\ref{defjac}) below). For a finite set $F$ of positive integers, consider the following polynomials (of degree $n$)
\begin{align}\label{xh1}
h_n^{\alpha, \beta, N;F}(x)&=\left|
  \begin{array}{@{}c@{}lccc@{}c@{}}
    & h_{n-u_F}^{\alpha ,\beta ,N}(x+j-1) &&\hspace{-.4cm}{}_{1\le j\le n_F+1} \\
    \dosfilas{ h_{f}^{\alpha ,\beta ,N}(x+j-1) }{f\in F} \\
      \end{array}
  \hspace{-.3cm}\right|,\\\label{xj1}
P_n^{\alpha,\beta ;F}(x)&= \left|
  \begin{array}{@{}c@{}lccc@{}c@{}}
    & (P_{n-u_F}^{\alpha ,\beta })^{(j-1)}(x) &&\hspace{-.4cm}{}_{1\le j\le n_F+1} \\
    \dosfilas{(P_{f}^{\alpha ,\beta })^{(j-1)}(x) }{f\in F}
  \end{array}
  \hspace{-.3cm}\right|,
\end{align}
where $n\in \sigma_F$ and
\begin{equation}\label{elsig}
\sigma_F=\{u_F,u_F+1,\cdots\}\setminus \{u_F+f:f\in F\},\quad u_F=\sum_{f\in F}f-\binom{n_F+1}{2}
\end{equation}
(the examples (\ref{xh1}) and (\ref{xj1}) are the case $F_2=\emptyset$ in \cite{duha}).

Along this paper, we use the following notation:
given a finite set of positive integers $F=\{f_1,\ldots , f_{n_F}\}$, the expression
\begin{equation}\label{defdosf}
  \begin{array}{@{}c@{}lccc@{}c@{}}
  &  &&\hspace{-.9cm}{}_{1\le j\le n_F} \\
    \dosfilas{ z_{f,j}  }{f\in F}
  \end{array}
\end{equation}
inside of a matrix or a determinant will mean the submatrix defined by
$$
\left(
\begin{array}{cccc}
z_{f_1,1} & z_{f_1,2} &\cdots  & z_{f_1,n_F}\\
\vdots &\vdots &\ddots &\vdots \\
z_{f_{n_F},1} & z_{f_{n_F},2} &\cdots  & z_{f_{n_F},{n_F}}
\end{array}
\right) .
$$
The determinants (\ref{xh1}) and (\ref{xj1}) should be understood in this form.
If $X$ is a finite set, we denote by $n_X$ the number of elements of $X$.

It was proved in \cite{duha}, that the polynomials $h_n^{\alpha, \beta, N;F}$, $n\in \sigma_F$, are eigenfunctions of a second order difference operator, while the polynomials $P_n^{\alpha, \beta;F}$, $n\in \sigma_F$, are eigenfunctions of a second order differential operator. Under certain admissibility conditions on $\alpha$, $\beta$ and $F$ both sequences of polynomials are orthogonal with respect to positive measures. For instance, that is the case when $\alpha,\beta >-1$ and $\prod_{f\in F}(x-f)\ge 0$, $x\in \NN$. In this paper, the families (\ref{xh1}) and (\ref{xj1}) are called standard examples.

The cases $\alpha,\beta = -1,-2,\cdots$ were not considered in \cite{duha} (and, as far as this author knows, in any other paper on exceptional polynomials)
because some of the Hahn and Jacobi polynomials collapse to zero and then both determinants (\ref{xh1}) and (\ref{xj1}) collapse also to zero. Apparently this degeneracy has the consequence that the cases $\alpha,\beta = -1,-2,\cdots$ seem to have little interest. However one should take into account that appearances can be very deceiving! Indeed, in the new examples of exceptional Hahn and Jacobi polynomials constructed in this paper the parameters $\alpha$ and $\beta$ are taken to be negative integers. By choosing the finite set $F$ appropriately, we show that the degeneracy can be avoided and a pletora of new examples of exceptional Hahn and Jacobi polynomials, depending now of an arbitrary number of continuous parameters, can be constructed. More precisely, consider two negative integers $\ax,\bx$ and a positive integer $N$ satisfying $-N\le \ax\le \bx\le -1$ (we use the notation $\ax,\bx$ instead of the usual $\alpha,\beta$ to stress that the numbers $\ax$ and $\bx$ are negative integers). Let $F$ be a finite set of positive integers satisfying
\begin{equation}\label{cis}
\{-\bx,\cdots, -\ax-\bx-1\}\subset F.
\end{equation}
As explained above, the formulas (\ref{xh1}) and (\ref{xj1}) does not work because   $h_{f}^{\ax,\bx,N}=P_{f}^{\ax,\bx}=0$, $f\in \{-\ax,\cdots ,-\ax-\bx-1\}\subset F$, and then $h_n^{\ax,\bx,N;F}=P_n^{\ax,\bx;F}=0$, $n\ge 0$. But we can fix this problem by substituting some of the Hahn polynomials $h_n^{\alpha,\beta,N}$ in (\ref{xh1}) or some of the Jacobi polynomials $P_n^{\alpha,\beta}$ in (\ref{xj1}) by some relative families of polynomials $\Hh_n^{\ax,\bx,N;\M}$ and $\Pp_n^{\ax,\bx;\M}$, respectively. In fact, we have found such families which it turns out to depend on a finite set of $-\bx$ real parameters. Miraculously, everything then works as in the standard examples: the new families are eigenfunctions of a second order difference or differential operator, respectively, and, formally these operators are identical to the operators of the standard families. And there is a surprisingly simple admissibility condition for the new families of exceptional Hahn and Jacobi polynomials to be orthogonal with respect to positive measures. These measures are of the same form as the orthogonalizing measures of the standard families. The proofs of these results are however much more complicated, and some of then have needed a different approach to the one used for the standard families (for instance, we can not use the Christoffel transform machinery as in \cite{duch,dume,duha} because the new Krall dual Hahn families constructed in \cite{dundh} are not anymore Christoffel transform of the dual Hahn measure).

The content of this paper is as follows.

In Section \ref{sec3}, we construct new families of exceptional Hahn polynomials depending on an arbitrary number of parameters.
We denote by
$$
\M=\{M_0,M_1,M_2, \cdots \}
$$
a set consisting of real parameters $M_i$ with $M_i\not =0,1$,
and consider two negative integers $\ax,\bx$  satisfying $\ax\le \bx\le -1$ and a real number $N\not=0,-1,\cdots$.

We need to introduce some auxiliary functions. As usual, $\lceil x\rceil$ denotes the ceiling function: $\lceil x\rceil=\min \{n\in \ZZ:n\ge x\}$, and $(x)_m$, $m\in \NN$, denotes the Pochhammer symbol $(x)_m=x(x+1)\cdots (x+m-1)$; we also set $(x,y)_m=(x)_m(y)_m$.
For $u\in \NN$, $u\le -\ax-1$, we define
$$
\varphi_{u}^{\ax,\bx,N}(s,x)=(\ax+1,-N)_{\max(u,-\ax-\bx-u-1)}\pFq{3}{2}{-u,u-s+\ax+\bx+1,-x}{\ax-s+1,-N}{1}.
$$
Since $u\in \NN$, except for  normalization, $\varphi_{u}^{\ax,\bx,N}(s,x)$ is the Hahn polynomial $h_u^{\ax-s,\bx,N}(x)$. Hence as a function of $x$ $\varphi_{u}^{\ax,\bx,N}(s,x)$ is a polynomial of degree at most $u$, and as a function of $s$ it is rational and analytic at $s=0$ when $u\le -\ax-1$. We next define the sequence of polynomials $(\Hh _n^{\ax,\bx,N;\M})_n$ which are going to play the role of the Hahn polynomials in the new examples of exceptional Hahn polynomials.

\begin{definition}\label{losw} Let $\ax,\bx$ and $N$ be two negative integers satisfying $\ax\le \bx\le -1$ and a real number $N\not=0,-1,\cdots$.
We define the sequence $(\Hh _n^{\ax,\bx,N;\M})_n$ of polynomials, $\Hh_n^{\ax,\bx,N;\M}$ of degree $n$, as follows.

\noindent
For $\lceil\frac{-\ax-\bx}{2}\rceil \le n\le -\ax-1$
\begin{equation}\label{losw1}
\Hh _n^{\ax,\bx,N;\M}(x)=\frac{\partial}{\partial s}\varphi_{n}^{\ax,\bx,N}(0,x)-\frac{\partial}{\partial s}\varphi_{-\ax-\bx-n-1}^{\ax,\bx,N}(0,x);
\end{equation}
for $-\ax\le n\le -\ax-\bx-1$
\begin{align}\label{losw2}
\Hh _n^{\ax,\bx,N;\M}(x)&=(-1)^{\bx+n}(n+\bx)!\Big[(-\ax-\bx-n-1)!(-x)_{-\ax}h_{\ax+n}^{-\ax,\bx,\ax+N}(x+\ax)\\\nonumber &\quad \quad+\frac{(n+\ax)!(-N-\ax-\bx-n-1)_{2n+\ax+\bx+1}}{M_{\ax+n}-1}h_{-\ax-\bx-n-1}^{\ax,\bx,N}(x)\Big] ;
\end{align}
otherwise
\begin{equation}\label{losw3}
\Hh _n^{\ax,\bx,N;\M}(x)=h_n^{\ax,\bx,N}(x)
\end{equation}
(as before $h_n^{\ax,\bx,N}$ denotes the $n$-th Hahn polynomial, see (\ref{hpol}) below).
\end{definition}

Notice that only the polynomials $\Hh_{n}^{\ax,\bx,N;\M}$, $-\ax\le n\le -\ax-\bx-1$, depend on the parameters in $\M$, more precisely: only the polynomial
$\Hh_{i-\ax}^{\ax,\bx,N;\M}$ depends on the parameter $M_i$, $i=0,\cdots , -\bx-1$. We introduced these polynomials in \cite{dundh}, but we will explain in Section \ref{sec1} how these auxiliary polynomials $(\Hh_n^{a,b,N;\M})_n$ can be constructed by taking limit in a suitable way in (\ref{xh1}).

The new families of exceptional Hahn polynomials $h_n^{\ax,\bx,N;\M, F}$, $n\in \sigma _F$ (\ref{elsig}), are defined  by
\begin{equation}\label{defmexi}
h_n^{\ax, \bx, N;\M, F}(x)=\left|
  \begin{array}{@{}c@{}lccc@{}c@{}}
    & \Hh_{n-u_F}^{\ax ,\bx ,N;\M}(x+j-1) &&\hspace{-.4cm}{}_{1\le j\le n_F+1} \\
    \dosfilas{\Hh_{f}^{\ax ,\bx ,N;\M}(x+j-1) }{f\in F} \\
  \end{array}
  \hspace{-.4cm}\right|,
\end{equation}
where  $(\Hh_n^{\ax,\bx;\M})_n$ are the polynomials introduced in Definition \ref{losw}.
Using Lemma 3.4 of \cite{DdI}, we deduce that the polynomial $h_n^{\ax, \bx, N;\M, F}$ has degree $n$
for $n\in \sigma_F$. The sequence of polynomials $h_n^{\ax,\bx,N;\M, F}$, $n\in \sigma _F$, depend on the $-\bx-n_-$ parameters $b_i$, $i\in F_{\bx}$, where
\begin{equation}\label{conp}
F_\bx=\{0,1,\cdots, -\bx-1\}\setminus \{-\bx-f-1:f\in F\},
\end{equation}
and $n_-$ is the number of positive integers in $F$ which are less than $-\bx$.

As mentioned above, we study the polynomials $(h_n^{\ax, \bx, N;\M, F})_n$
by dualizing the orthogonal polynomials with respect to the Krall dual Hahn measures constructed in \cite{dundh}.
In order to introduce here these measures we assume $N$ to be a positive integer with $-N\le \ax\le \bx\le -1$ and
that the finite set $F$ of positive integers satisfies (\ref{cis}). We adapt the notation to that of \cite{dundh} and set
\begin{equation}\label{mn}
a=-\ax,\quad b=-\bx,\quad \Nn=N+\ax+\bx,
\end{equation}
so that $1\le a,b\le N$. Consider finally the finite set of integers $U_F$  defined by
\begin{align}\label{elu}
U_F&=U_{F_-}\cup U_{F_+},\\\label{elu1}
U_{F_-}&=\{f+\ax+\bx:\mbox{$f\in F$ and $1\le f\le -\bx-1$}\},\\\label{elu2}
U_{F_+}&=\{f+\ax+\bx:\mbox{$f\in F$ and $-\ax-\bx\le f$}\}.
\end{align}
We then define the measures $\nu_{a,b,\Nn}^{\M,U_F}$ and $\nu_{a,b,\Nn}^{\M}$ by
\begin{align}\label{lctnu}
\nu_{a,b,\Nn}^{\M,U_F}&=\prod_{u\in U_F}(x-\lambda^{a,b}(u))\nu_{a,b,\Nn}^{\M},\\\label{lanu}
\nu_{a,b,\Nn}^{\M}&=\sum_{x=-b}^{-1}\frac{(2x+a+b+1)(\Nn+1-x)_{x+b}}{(\Nn+b+1)_{x+a+1}}M_{x+b}\delta_{\lambda^{a,b}(x)}\\\nonumber
&\qquad +\frac{(\Nn+1)_b^2}{(b+1)_{a-b}}\sum_{x=0}^\Nn \frac{\rho_{b,a,\Nn}(x)}{\prod_{i=0}^{b-1}(x+a+i+1)(x+b-i)}\delta _{\lambda^{a,b}(x)},
\end{align}
where $\rho_{b,a,N}$ is the dual Hahn measure (see (\ref{masdh}) below) and
\begin{equation}\label{deflamb}
\lambda^{a,b}(x)=x(x+a+b+1).
\end{equation}
Note that the measure $\nu_{a,b,\Nn}^{\M}$ depends on the parameters $M_i$, $i=0,\cdots, -\bx-1$, and it is positive if and only if these parameters are positive.
However, the measure $\nu_{a,b,\Nn}^{\M,U_F}$ depends on the parameters $M_i$, $i\in F_\bx$ (see (\ref{conp})) because each integer $u\in U_{F_-}$ kills the mass at $\lambda^{a,b}(-u-\ax-\bx-1)$ of the measure $\nu_{a,b,\Nn}^{\M,U_F}$ (note that $\lambda^{a,b}(-u-\ax-\bx-1)=\lambda^{a,b}(u)$ and $-\bx\le -u-\ax-\bx-1\le -2$ when $u\in U_{F_-}$).

In Lemma \ref{lem3.2}, we prove that the sequence of orthogonal polynomials with respect to the measure (\ref{lctnu}) and $h_n^{\ax,\bx,N;\M, F}$ are dual sequences.

As a consequence, we show in Theorem \ref{th3.3} that the polynomials $h_n^{\ax,\bx,N;\M,F}$, $n\in \sigma _F$, are eigenfunctions of a second order difference operator $D$, whose coefficients are rational functions (and which correspond to the coefficients of the three term recurrence formula for the orthogonal polynomials with respect to the measure $\nu_{a,b,\Nn}^{\M,U_F}$).

The most interesting case appears when the measure $\nu_{a,b,\Nn}^{\M,U_F}$ is positive. This gives rise to the concept of admissibility:

\begin{definition}\label{laad} We say that $\ax,\bx$ ($\ax\le\bx\le -1$), $\M$ and $F$ (satisfying (\ref{cis})) are admissible if the two following conditions holds
\begin{enumerate}
\item $\displaystyle\sign M_i=\sign \big[ \prod_{f\in F_{\textrm{ext}}}(i-f-\ax)(i+f+\bx+1)\big]$,\quad $i\in F_\bx$ (\ref{conp}), where
$F_{\textrm{ext}}=F\setminus \{-\bx,\cdots,-\ax-\bx-1\}$.
\item $\displaystyle\prod_{f\in F;f\ge-\ax-\bx}(x-f-\ax-\bx)\ge 0$, \quad $x=0,\cdots, \max \{f+\ax+\bx:f\in F\}$.
\end{enumerate}
\end{definition}
It is not difficult to see that $\ax,\bx$, $\M$ and $F$ are admissible if and only if the measure $\nu_{a,b,\Nn}^{\M,U_F}$ is positive.

In Lemma \ref{l3.1} we prove that this admissibility condition is equivalent to
$$
\Omega _{\M,F}^{\ax, \bx, N}(n)\Omega _{\M,F}^{\ax, \bx, N}(n+1)>0,\quad n=0,\cdots,N-n_F,
$$
where $\Omega _{\M, F}^{\ax, \bx, N}$ is the polynomial defined by
\begin{equation}\label{defom}
\Omega _{\M,F}^{\ax, \bx, N}(x)=\left|
  \begin{array}{@{}c@{}lccc@{}c@{}}
    &  &&\hspace{-.9cm}{}_{1\le j\le n_F} \\
    \dosfilas{\Hh_{f}^{\ax ,\bx ,N}(x+j-1) }{f\in F}
  \end{array}
  \hspace{-.3cm}\right|.
\end{equation}
The admissibility condition in Definition \ref{laad} allows us to define the positive measure
$$
\omega_{\ax,\bx,N}^{\M, F}=\sum_{x=0}^{N-n_F} \frac{\binom{\ax +n_F+x}{x}\binom{\bx +N-x}{N-n_F-x}}{\Omega_{\M, F}^{\ax,\bx,N}(x)\Omega_{\M, F}^{\ax,\bx,N}(x+1)}\delta_x.
$$
In Theorem \ref{th4.5} we prove that, under the assumption of the admissibility condition in Definition \ref{laad}, the polynomials $h_n^{\ax,\bx,N;\M, F}$, $n\in \sigma_F, n\le N+n_F$, are orthogonal and complete with respect to the positive measure $\omega_{\ax,\bx,N}^{\M, F}$.

We complete Section \ref{sec3} showing how to remove the assumption $\ax\le \bx$.

In Section \ref{sec5}, we construct new sequences of exceptional Jacobi polynomials depending of an arbitrary number of continuous parameters. We do that by taking limits in the exceptional Hahn families constructed in Section \ref{sec3}.

For $u\in \NN$, $u\le -\ax-1$, we define
$$
\varphi_{u}^{\ax,\bx}(s,x)=\frac{(\ax+1)_{\max(u,-\ax-\bx-u-1)}\pFq{2}{1}{-u,u-s+\ax+\bx+1}{\ax-s+1}{(1-x)/2}}{\max(u!,(-\ax-\bx-u-1)!)}.
$$
Except for the normalization constant in front of the hypergeometric function, and since $u\in \NN$, $\varphi_{u}^{\ax,\bx}(s,x)$ is the Jacobi polynomial $P_u^{\ax-s,\bx}(x)$. Hence, as a function of $x$ $\varphi_{u}^{\ax,\bx}(s,x)$ is a polynomial of degree at most $u$, and as a function of $s$ it is rational and analytic at $s=0$ when $u\le -\ax-1$. We next define the sequence of polynomials $(\Pp _n^{\ax,\bx;\M})_n$ which are going to play the role of the Jacobi polynomials in the new examples of exceptional Jacobi polynomials.

\begin{definition}\label{losj} Let $\ax,\bx$ be negative integers satisfying $\ax\le\bx\le -1$.
We define the sequence $(\Pp _n^{\ax,\bx;\M})_n$ of polynomials, $\Pp_n^{\ax,\bx;\M}$ of degree $n$, as follows.

\noindent
For $\lceil\frac{-\ax-\bx}{2}\rceil \le n\le -\ax-1$
\begin{equation}\label{losj1}
\Pp _n^{\ax,\bx;\M}(x)=\frac{\partial}{\partial s}\varphi_{n}^{\ax,\bx}(0,x)-\frac{\partial}{\partial s}\varphi_{-\ax-\bx-n-1}^{\ax,\bx}(0,x);
\end{equation}
for $-\ax\le n\le -\ax-\bx-1$
\begin{align}\label{losj2}
\Pp _n^{\ax,\bx,N;\M}(x)&=\frac{(n+\ax)!(n+\bx)!(-\ax-\bx-n-1)!}{(-1)^{\bx+n}n!}\\\nonumber &\quad\quad \times\Big[\frac{1}{M_{\ax+n}-1}P_{-\ax-\bx-n-1}^{\ax,\bx}(x) +\left(\frac{1-x}{2}\right)^{-\ax}P_{\ax+n}^{-\ax,\bx}(x)\Big];
\end{align}
otherwise
\begin{equation}\label{losj3}
\Pp _n^{\ax,\bx;\M}(x)=P_n^{\ax,\bx}(x)
\end{equation}
(as before $P_n^{\ax,\bx}$ denotes the $n$-th Jacobi polynomial, see (\ref{defjac}) below).
\end{definition}

Notice that again, only the polynomial $\Pp_{i-\ax}^{\ax,\bx;\M}$ depends on the parameter $M_i$, $i=0,\cdots , -\bx-1$.

The polynomials $(\Pp _n^{\ax,\bx;\M})_n$ can be obtained in two different ways. We explain in Section \ref{sec1} how these auxiliary polynomials $(\Pp_n^{a,b;\M})_n$ can be constructed by taking limit in a suitable way in (\ref{xj1}). But also, the polynomial $\Pp_n^{a,b;\M}$ can be produced by changing $x\to (1-x)N/2$ in $\Hh_n^{\ax,\bx,N;\M}$ and taking limit when $N\to \infty$ (i.e., in the same way as the $n$-th Jacobi polynomial can be produced from the $n$-th Hahn polynomial).

The new families of exceptional Jacobi polynomials $P_n^{\ax,\bx;\M, F}$, $n\in \sigma _F$ (\ref{elsig}), are defined  by
\begin{equation}\label{losjx}
P_n^{\ax, \bx;\M, F}(x)=\left|
  \begin{array}{@{}c@{}lccc@{}c@{}}
    & (\Pp_{n-u_F}^{\ax ,\bx;\M})^{(j-1)}(x) &&\hspace{-.4cm}{}_{1\le j\le n_F+1} \\
    \dosfilas{(\Pp_{f}^{\ax ,\bx ;\M})^{(j-1)}(x) }{f\in F} \\
  \end{array}
  \hspace{-.3cm}\right|,
\end{equation}
where  $(\Pp_n^{\ax,\bx;\M})_n$ are the polynomials introduced in Definition \ref{losj}.
The polynomial $P_n^{\ax, \bx;\M, F}$ has degree $n$ for $n\in \sigma_F.$
As for the exceptional Hahn family, the sequence of polynomials $P_n^{\ax,\bx,N;\M, F}$, $n\in \sigma _F$, depend on the $-\bx-n_-$ parameters $M_i$, $i\in F_{\bx}$ (\ref{conp}).

Assuming that the finite set $F$ of positive integers satisfies (\ref{cis}), we prove in Theorem \ref{th5.1} that the polynomials $P_n^{\ax,\bx;\M,F}$, $n\in \sigma _F$, are eigenfunctions of a second order differential operator, whose coefficients are rational functions.

The most interesting case appears when $\ax,\bx$, $\M$ and $F$ are admissible (Definition \ref{laad}). Admissibility is closely related to the fact that
the polynomial
\begin{equation}\label{defomj}
\Omega _{\M, F}^{\ax, \bx}(x)=\left|
  \begin{array}{@{}c@{}lccc@{}c@{}}
    &  &&\hspace{-.9cm}{}_{1\le j\le n_F} \\
    \dosfilas{(\Pp_{f}^{\ax ,\bx})^{(j-1)}(x) }{f\in F}
  \end{array}
  \hspace{-.3cm}\right|
\end{equation}
has not roots in $[-1,1]$. In fact we prove that if
\begin{equation}\label{lamie}
\Omega _{\M, F}^{\ax, \bx}(x)\not =0,\quad x\in [-1,1],
\end{equation}
then $\ax,\bx$, $\M$ and $F$ are admissible. We have computation evidence showing that the converse is also true, but we have not been able to prove it. Hence, we propose it here as a conjecture.

\medskip

\noindent\textit{Conjecture}.  If $\ax,\bx$ ($\ax\le\bx\le -1$), $\M$ and $F$ (satisfying (\ref{cis})) are admissible (see Definition \ref{laad}) then
$\Omega _{\M, F}^{\ax, \bx}(x)\not =0$, $x\in [-1,1]$.

If (\ref{lamie}) holds, we prove that the polynomials $P_n^{\ax,\bx;\M,F}$, $n\in \sigma _F$, are orthogonal and complete with respect to the positive weight in $[-1,1]$ defined by
$$
\omega_{\alpha,\beta;\M, F} =\frac{(1-x)^{\ax +n_F}(1+x)^{\bx+n_F}}{(\Omega_{\M, F}^{\ax,\bx}(x))^2}
$$
(see Theorem \ref{th5.6}).

We complete  Section \ref{sec5} showing how to remove the assumption $\ax\le \bx$, and comparing our examples with that constructed in \cite{xle}. We point out that the exceptional Legendre polynomials are constructed in \cite{xle} using a completely different approach (the so-called confluent Darboux transformation is the main tool) and that neither the polynomials $(\Pp_{n}^{\ax ,\bx ;\M})_n$ (\ref{losj}) nor the Wronskian determinant (\ref{losjx}) are used in \cite{xle}.

In the last Section, we consider the case when the finite set $F$ does not satisfy (\ref{cis}).

\section{Preliminares}
In Section \ref{sec3} of this paper, we deal with discrete measures supported in a finite number of mass points. The following lemma will be useful to manage these measures.

\begin{lemma}[Lemma 2.1 of \cite{dudh}]\label{ldmp} Consider a discrete measure $\mu =\sum_{i=0}^N\mu_i\delta_{x_i}$, with $\mu_i\not =0$, $i=0,\cdots , N$.
\begin{enumerate}
\item If we assume that there exists a sequence $p_i$, $i=0,\cdots, N$, of orthogonal polynomials, with $\deg (p_i)=i$ and such that $\langle p_i,p_i\rangle\not =0$ has constant sign, then either $\mu_i>0$ or $\mu_i<0$, $i=0,\cdots, N$.
\item If we assume that there exists a sequence $(f_i)_{i=0}^{N+1}$ of orthogonal functions with non-null $L^2$ norm, then these functions form a basis of $L^2(\mu)$.
\end{enumerate}
\end{lemma}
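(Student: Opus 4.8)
The plan is to treat $L^2(\mu)$ as what it really is: the finite-dimensional space $V$ of (real-valued) functions on the support $\{x_0,\dots,x_N\}$, endowed with the symmetric bilinear form $\langle f,g\rangle=\sum_{i=0}^N\mu_i f(x_i)g(x_i)$. Since the $\mu_i$ are nonzero but possibly of mixed sign, this form is nondegenerate yet in general \emph{indefinite}, so I must avoid any argument that secretly uses positivity (Gram--Schmidt normalization, Cauchy--Schwarz, and the like). The single computation that drives everything is elementary: if $\ell_j$ denotes the Lagrange function with $\ell_j(x_k)=\delta_{jk}$, then $\langle \ell_j,\ell_j\rangle=\mu_j$, and $\{\ell_0,\dots,\ell_N\}$ is a basis of $V$, which therefore has dimension $N+1$.

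For the first statement I would first observe that the hypotheses force $\{p_0,\dots,p_N\}$ to be a basis of $V$: the $p_i$ have pairwise distinct degrees $0,\dots,N$, hence are independent as polynomials, and since a polynomial of degree $\le N$ vanishing at the $N+1$ distinct points $x_i$ is identically zero, they remain independent as elements of $V$; being $N+1$ of them, they span. Now expand each Lagrange function in this orthogonal basis, $\ell_j=\sum_{i=0}^N c_{ij}p_i$. Orthogonality of the $p_i$ collapses the norm to a diagonal sum,
\[
\mu_j=\langle \ell_j,\ell_j\rangle=\sum_{i=0}^N c_{ij}^2\,\langle p_i,p_i\rangle .
\]
If all $\langle p_i,p_i\rangle$ share a common sign $\varepsilon$, then every nonzero term on the right carries sign $\varepsilon$; since $\ell_j\ne 0$, at least one $c_{ij}\ne 0$, so $\mu_j$ has sign $\varepsilon$ for every $j$, which is exactly the claim. (Equivalently one may invoke Sylvester's law of inertia: the form is diagonal with entries $\mu_i$ in the Lagrange basis and diagonal with entries $\langle p_i,p_i\rangle$ in the $p_i$-basis, so these two diagonals share the same number of positive and of negative entries.)

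For the second statement the key point is that orthogonality together with non-null norm yields linear independence with \emph{no} recourse to definiteness: from a relation $\sum_i c_i f_i=0$, pairing with $f_j$ gives $c_j\langle f_j,f_j\rangle=0$, and $\langle f_j,f_j\rangle\ne 0$ forces $c_j=0$. Hence the given orthogonal family is linearly independent in $V$, and a linearly independent family of $N+1=\dim L^2(\mu)$ functions is automatically a basis.

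I do not expect a deep obstacle here; the lemma is elementary and the only genuine care-points are structural rather than computational. The first is establishing that the $p_i$ really constitute a basis of the finite-dimensional function space (the Lagrange-interpolation/distinct-degree observation), since this is precisely what lets the sign comparison bite. The second is making sure every step remains valid for an \emph{indefinite} form, i.e.\ that one never divides by, or takes a square root of, an inner product under the assumption $\langle\cdot,\cdot\rangle\ge 0$. Both the sign-propagation in the first part and the independence argument in the second use only the nonvanishing of the relevant self-inner-products, so they survive the loss of positivity intact.
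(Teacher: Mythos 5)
Your proof is correct, but there is nothing in-paper to compare it against: the paper states this lemma as a quotation of Lemma 2.1 of \cite{dudh} and gives no proof of it, so the only meaningful review is of correctness, and your argument passes. Both halves are handled properly and without hidden positivity assumptions: identifying $L^2(\mu)$ with the $(N+1)$-dimensional space of functions on $\{x_0,\dots,x_N\}$, computing $\langle \ell_j,\ell_j\rangle=\mu_j$ for the Lagrange functions, observing that $p_0,\dots,p_N$ stay linearly independent after restriction to the support (a polynomial of degree $\le N$ vanishing at $N+1$ distinct points is zero), and expanding $\mu_j=\sum_{i}c_{ij}^2\langle p_i,p_i\rangle$ gives part 1; the pairing argument $c_j\langle f_j,f_j\rangle=0$ gives independence, hence part 2, and both survive indefiniteness of the form exactly as you say. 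This care is the right one, since the paper applies part 1 precisely to deduce positivity of a measure (Lemma \ref{l3.1}), so positivity cannot be presupposed. One small mismatch with the statement as printed: it reads $(f_i)_{i=0}^{N+1}$, i.e.\ $N+2$ functions, which cannot form a basis of the $(N+1)$-dimensional space $L^2(\mu)$ --- indeed, by your own independence argument the hypothesis would then be unsatisfiable, making part 2 vacuous. This is evidently an off-by-one slip in the statement: in the paper's applications (e.g.\ the proof of Theorem \ref{th4.5}) the number of orthogonal functions equals the number of mass points, and your reading ($N+1$ functions, matching $\dim L^2(\mu)$) is the intended one.
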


We also will need the Sylvester's determinant identity (for the proof and a more general formulation of the Sylvester's identity see \cite{Gant}, p. 32).

\bigskip
\begin{lemma}\label{lemS}
For a square matrix $M=(m_{i,j})_{i,j=1}^k$,  and for each $1\le i, j\le k$, denote by $M_i^j$ the square matrix that results from $M$ by deleting the $i$-th row and the $j$-th column. Similarly, for $1\le i, j, p,q\le k$ denote by $M_{i,j}^{p,q}$ the square matrix that results from $M$ by deleting the $i$-th and $j$-th rows and the $p$-th and $q$-th columns.
The Sylvester's determinant identity establishes that for $i_0,i_1, j_0,j_1$ with $1\le i_0<i_1\le k$ and $1\le j_0<j_1\le k$, then
$$
\det(M) \det(M_{i_0,i_1}^{j_0,j_1}) = \det(M_{i_0}^{j_0})\det(M_{i_1}^{j_1}) - \det(M_{i_0}^{j_1}) \det(M_{i_1}^{j_0}).
$$
\end{lemma}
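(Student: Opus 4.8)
The plan is to regard the entries $m_{i,j}$ as independent indeterminates, so that both sides of the asserted equality are polynomials with integer coefficients; it then suffices to prove the identity on the Zariski-dense set where the central minor is nonzero, the general polynomial identity following by continuity. Accordingly, write $B=M_{i_0,i_1}^{j_0,j_1}$ for the central $(k-2)\times(k-2)$ submatrix (all rows except $i_0,i_1$ and all columns except $j_0,j_1$) and assume $\det B\neq0$. The whole argument is then an exercise in Schur complements relative to $B$.

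First I would move the rows indexed $i_0,i_1$ to the bottom and the columns indexed $j_0,j_1$ to the right; this is a permutation whose sign $\eta$ depends only on $i_0,i_1,j_0,j_1$, and it places $B$ in the top-left corner. The Schur complement formula gives $\det M=\eta\,\det(B)\,\det(S)$, where $S=(S_{p,q})$, $p\in\{i_0,i_1\}$, $q\in\{j_0,j_1\}$, is the $2\times2$ matrix with $S_{p,q}=m_{p,q}-r_p\,B^{-1}c_q$ ($r_p$ the part of row $p$ lying in the columns of $B$, and $c_q$ the part of column $q$ lying in the rows of $B$). Carrying out the same elimination inside each of the four minors $M_{i_a}^{j_b}$ ($a,b\in\{0,1\}$) — in which $B$ now appears as a block of corank one, bordered by the single surviving row $i_{1-a}$ and column $j_{1-b}$ — yields $\det(M_{i_a}^{j_b})=\eta_{a,b}\,\det(B)\,S_{i_{1-a},j_{1-b}}$, with a sign $\eta_{a,b}$ again depending only on the deleted indices.

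Substituting these expressions and using $\det(M_{i_0,i_1}^{j_0,j_1})=\det B$, the claim collapses to
\[
\eta\,\det(B)^2\det(S)=\det(B)^2\big(\eta_{0,0}\eta_{1,1}\,S_{i_0,j_0}S_{i_1,j_1}-\eta_{0,1}\eta_{1,0}\,S_{i_0,j_1}S_{i_1,j_0}\big),
\]
and since $\det(S)=S_{i_0,j_0}S_{i_1,j_1}-S_{i_0,j_1}S_{i_1,j_0}$ is merely the $2\times2$ determinant, the entire statement reduces to the two sign relations $\eta=\eta_{0,0}\eta_{1,1}=\eta_{0,1}\eta_{1,0}$. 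The hard part will be exactly this sign bookkeeping, which I would settle by evaluating each permutation sign as a count of adjacent transpositions: carrying $i_0<i_1$ to the last two rows costs $(-1)^{i_0+i_1+1}$ and the analogous column move $(-1)^{j_0+j_1+1}$, so $\eta=(-1)^{i_0+i_1+j_0+j_1}$; the same count inside the smaller minors gives $\eta_{0,0}\eta_{1,1}=(-1)^{i_1+j_1}(-1)^{i_0+j_0}$ and $\eta_{0,1}\eta_{1,0}=(-1)^{i_1+j_0+1}(-1)^{i_0+j_1+1}$, both equal to $\eta$. With the signs matched, the identity holds wherever $\det B\neq0$, hence identically.
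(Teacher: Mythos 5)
Your proof is correct. There is, however, nothing in the paper to compare it against: the paper does not prove Lemma \ref{lemS} at all, but quotes it from the literature, referring to Gantmacher's book for the proof. Your argument---pass to generic entries, move the central block $B=M_{i_0,i_1}^{j_0,j_1}$ into a corner, and express the five determinants through the $2\times 2$ Schur complement $S=D-RB^{-1}C$---is the standard self-contained proof (and is, in spirit, the Gaussian-elimination mechanism from which the identity emerges in Gantmacher's treatment). I verified the two places where such a proof can silently fail. The sign bookkeeping is right: moving rows $i_0<i_1$ to the bottom costs $(-1)^{(k-i_1)+(k-1-i_0)}=(-1)^{i_0+i_1+1}$, and with the column count this gives $\eta=(-1)^{i_0+i_1+j_0+j_1}$; in the minor $M_{i_a}^{j_b}$ the surviving row $i_{1-a}$ occupies position $i_{1-a}$ or $i_{1-a}-1$ according as $i_{1-a}<i_a$ or not, and the same count yields your four values $\eta_{0,0}=(-1)^{i_1+j_1}$, $\eta_{1,1}=(-1)^{i_0+j_0}$, $\eta_{0,1}=(-1)^{i_1+j_0+1}$, $\eta_{1,0}=(-1)^{i_0+j_1+1}$, so that indeed $\eta_{0,0}\eta_{1,1}=\eta_{0,1}\eta_{1,0}=\eta$. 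The pairing of Schur entries with minors is also the correct one, $\det(M_{i_a}^{j_b})=\eta_{a,b}\det(B)\,S_{i_{1-a},j_{1-b}}$, which turns the right-hand side of the lemma into $\eta(\det B)^2\bigl(S_{i_0,j_0}S_{i_1,j_1}-S_{i_0,j_1}S_{i_1,j_0}\bigr)=\eta(\det B)^2\det S$, matching the left-hand side. One small improvement: the passage from the generic case to the full identity needs no topology. Since the $m_{i,j}$ are independent indeterminates, $\det B$ is a nonzero element of the integral domain $\ZZ[m_{i,j}]$, so the Schur computation is legitimate over the fraction field $\QQ(m_{i,j})$; the resulting equality of polynomials then holds identically, and specializing the variables proves the lemma over any commutative ring, not merely where a density-and-continuity argument applies.
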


Given a finite set of numbers $X=\{x_1,\cdots, x_{n_X}\}$, $x_i<x_j$ if $i<j$, we denote by $V_X$ the Vandermonde determinant defined by
\begin{equation}\label{defvdm}
V_X=\prod_{1=i<j=k}(x_j-x_i).
\end{equation}

\subsection{Dual Hahn, Hahn and Jacobi polynomials}
We include here basic definitions and facts about dual Hahn, Hahn and Jacobi polynomials, which we will need in the following Sections.

We write $(R_{n}^{a,b,N})_n$ for the sequence of dual Hahn polynomials defined by
\begin{equation}\label{dhpol}
R_{n}^{a,b,N}(x)=\sum _{j=0}^n\frac{(-n)_j(-N+j)_{n-j}(a+j+1)_{n-j}}{n!(-1)^j j!}\prod_{i=0}^{j-1}(x-i(a+b+1+i))
\end{equation}
(see \cite{KLS}, pp, 209-13).
We have taken a different normalization that in \cite{dudh} since
we deal here with the case when $a$ is a negative integer.

Notice that $R_{n}^{a,b,N}$ is always a polynomial of degree $n$.
Using that
$$
(-1)^j\prod_{i=0}^{j-1}(\lambda^{a,b}(x)-i(a+b+1+i))=(-x)_j(x+a+b+1)_j,
$$
we get the hypergeometric representation
$$
R_{n}^{a,b,N}(\lambda^{a,b}(x))=\frac{(a+1)_n(-N)_n}{n!}\pFq{3}{2}{-n,-x,x+a+b+1}{a+1,-N}{1},
$$
where $\lambda^{a,b}(x)$ is defined by (\ref{deflamb})

The dual Hahn polynomials satisfy the identity
\begin{align}\label{cp1}
R_{n}^{-a,b,N}(\lambda^{-a,b}(x))&=R_{n}^{-a,-b,N+b}(\lambda^{-a,-b}(x+b)).
\end{align}

When $N$ is a positive integer and $a ,b \not =-1,-2,\cdots -N $, $a+b \not=-1,\cdots, -2N-1$, the dual Hahn polynomials $R_n^{a,b,N}$, $n=0,\cdots , N$, are
orthogonal with respect to the following measure
\begin{align}\label{masdh}
\rho_{a,b,N}&=\sum _{x=0}^N \frac{(2x+a+b+1)(a+1)_x(-N)_xN!}{(-1)^x(x+a+b+1)_{N+1}(b+1)_xx!}\delta_{\lambda^{a,b}(x)},
\\\label{normedh}
\langle R_n^{a,b,N},R_n^{a,b,N}\rangle &=\frac{(-N)_n^2\binom{a+n}{n}}{\binom{b+N-n}{N-n}},\quad  n=0,\cdots , N.
\end{align}
The measure $\rho_{a,b,N}$ is positive or negative only when either $-1<a,b$ or $a,b<-N$, respectively.

If $N$ is not a nonnegative integer and $a,-b-N-1\not =-1,-2,\cdots$, the dual Hahn polynomials $(R_n^{a,b,N})_n$ are always orthogonal with respect to a signed measure.

We write $(h_{n}^{a,b,N})_n$ for the sequence of Hahn polynomials defined by
\begin{equation}\label{hpol}
h_{n}^{a,b,N}(x)=\sum _{j=0}^n\frac{(-n)_j(a+b+n+1)_j(-N+j)_{n-j}(a+j+1)_{n-j}(-x)_j}{j!}.
\end{equation}
We have taken a different normalization that in \cite{dudh} since
we deal here with the case when $a$ is a negative integer (see \cite{KLS}, pp, 204-8).

When $\ax,\bx\in\{-1,-2,\cdots\}$, $\ax\le \bx$, we have that
\begin{align}\label{cos1}
&h_{n}^{\ax,\bx,N}(x)=0,\quad \mbox{for $-\ax\le n\le -\ax-\bx-1$,}\\\label{cos2}
&\mbox{$h_{n}^{\ax,\bx,N}$ has degree $-\ax-\bx-n-1$, with $-\bx\le-\ax-\bx-n-1<n$,}\\\nonumber &\hspace{4cm} \mbox{for $\lceil\frac{-\ax-\bx}{2}\rceil \le n\le -\ax-1$},\\\label{cos3}
&\mbox{$h_{n}^{\ax,\bx,N}$ has degree $n$, for $n\not \in \{\lceil\frac{-\ax-\bx}{2}\rceil,\cdots ,-\ax-\bx-1\}$,}\\\nonumber&\hspace{2cm}\mbox{and it is divisible by $(x+\ax+1)_{-\ax}$ when $n\ge -\ax-\bx$.}
\end{align}
The hypergeometric representation of the Hahn and dual Hahn polynomials
shows the following duality when $n,m\ge 0,$
\begin{equation}\label{sdm2b}
(a+1)_n(-N)_n h_{m}^{a,b,N}(n)=n!(a+1)_m(-N)_m R_{n}^{a,b,N}(\lambda^{a,b}(m)).
\end{equation}
Hahn polynomials also satisfies the following identities.
\begin{align}\label{hcp}
(-1)^nh_{n}^{a,b,N}(x)&=h_{n}^{b,a,N}(N-x),\\\label{fi1}
(-1)^nh_{n}^{a,b,N}(x)&=h_{n}^{a,b,-a-b-2-N}(-x-a-1),\\\label{cph}
h_{n+a+b}^{-a,-b,N+a+b}(x+a)&\\\nonumber&\hspace{-1.8cm}=(n+1)_{a+b}(x+1)_a(x-N-b)_bh_n^{a,b,N}(x),\quad \mbox{when $a,b\in \NN$}.
\end{align}
For $\alpha,\beta \in \RR $, we use the standard definition of the Jacobi polynomials $(P_{n}^{\alpha,\beta})_n$
\begin{equation}\label{defjac}
P_{n}^{\alpha,\beta}(x)=2^{-n}\sum _{j=0}^n \binom{n+\alpha}{j}\binom{n+\beta}{n-j}(x-1)^{n-j}(x+1)^{j}
\end{equation}
(see \cite{KLS}, pp. 216-221).

When $\ax,\bx\in\{-1,-2,\cdots\}$, $\ax\le \bx$, we have that
\begin{align}\label{cos1j}
&P_{n}^{\ax,\bx}(x)=0,\quad \mbox{for $-\ax\le n\le -\ax-\bx-1$,}\\\label{cos2j}
&\mbox{$P_{n}^{\ax,\bx}$ has degree $-\ax-\bx-n-1$, with $-\bx\le-\ax-\bx-n-1<n$,}\\\nonumber &\hspace{4cm} \mbox{for $\lceil\frac{-\ax-\bx}{2}\rceil \le n\le -\ax-1$},\\\label{cos3j}
&\mbox{$P_{n}^{\ax,\bx}$ has degree $n$, for $n\not \in \{\lceil\frac{-\ax-\bx}{2}\rceil,\cdots ,-\ax-\bx-1\}$.}
\end{align}
When $\alpha ,\beta >-1$, Jacobi polynomials are orthogonal with respect to the positive weight
\begin{equation}\label{jacw}
(1-x)^\alpha (1+x)^{\beta}, \quad -1<x<1.
\end{equation}
One can obtain Jacobi polynomials from Hahn polynomials using the limit
\begin{equation}\label{blmel}
\lim_{N\to +\infty}\frac{h_n^{\alpha,\beta,N}\left(\frac{(1-x)N}{2}\right)}{N^n}=(-1)^nn!P_n^{\alpha,\beta}(x)
\end{equation}
see \cite{KLS}, p. 207 (note that we are using for
Hahn polynomials a different normalization to that in \cite{KLS}). This limit is uniform in compact sets of $\CC$.

\section{Where do the  polynomials $\Hh _n^{\ax,\bx,N;\M}$ and $\Pp _n^{\ax,\bx;\M}$ come from?}\label{sec1}
As explained in the Introduction the polynomials $(\Hh_n^{\ax,\bx,N;\M})_n$ (Definition \ref{losw}) and $(\Pp _n^{\ax,\bx;\M})_n$ (Definition \ref{losj})  play in the new  exceptional Hahn and Jacobi families the role played by the Hahn and Jacobi polynomials in the standard families, respectively. We find the polynomials $(\Hh_n^{\ax,\bx,N;\M})_n$ in \cite{dundh}. In this Section, we show how to get the polynomials
$(\Hh_n^{\ax,\bx,N;\M})_n$ and $(\Pp _n^{\ax,\bx;\M})_n$ by taking limit in a suitable way in (\ref{xh1}) and (\ref{xj1}), respectively.

So, let $\ax, \bx$ be negative integers with $\ax\le \bx\le -1$. We
do not need to assume here that $N$ is a positive integer.
Let $F$ be a finite set of positive integers satisfying (\ref{cis}).

Write
\begin{equation}\label{losas}
\alpha_s=\ax+s/M,\quad \beta_s=\bx-s,
\end{equation}
where $s,M$ are positive real numbers with $s$ small enough so that $\alpha_s,\beta_s\not\in \ZZ$. Consider  the exceptional Hahn family defined by
\begin{equation}\label{defmexis}
h_n^{\alpha_s, \beta_s, N;F}(x)=\left|
  \begin{array}{@{}c@{}lccc@{}c@{}}
    & h_{n-u_F}^{\alpha_s ,\beta_s ,N}(x+j-1) &&\hspace{-.6cm}{}_{1\le j\le k+1} \\
    \dosfilas{ h_{f}^{\alpha_s ,\beta_s ,N}(x+j-1) }{f\in F} \\
      \end{array}
  \hspace{-.3cm}\right|.
\end{equation}
We split ut the finite set $F$ in four parts
\begin{align*}
F_p&=\{f:\mbox{$f\in F$ and $f\le \lceil\frac{-\ax-\bx}{2}\rceil -1$\},\quad $F_s=\{f:\lceil\frac{-\ax-\bx}{2}\rceil \le f\le -\ax-1\}$},\\
F_t&=\{f:-\ax\le f\le -\ax-\bx-1\},\hspace{1.2cm} F_c=\{f:\mbox{$f\in F$ and $-\ax-\bx \le f$}\}.
\end{align*}
Note that  $f\in F_s$ if and only if $-\bx\le -\ax-\bx-f-1\le \lceil\frac{-\ax-\bx}{2}\rceil -1$ and so if $f\in F_s$ then $-\ax-\bx-f-1\in F_p$ (since we assume (\ref{cis})).
Hence, except for the normalization constant $s^{n_{F_s}+n_{F_t}}$, the exceptional Hahn polynomial $h_n^{\alpha_s, \beta_s, N;F}$ (\ref{defmexis}) can be rewritten in the form
\begin{equation}\label{defmexi22}
\left|
  \begin{array}{@{}c@{}lccc@{}c@{}}
    & h_{n-u_F}^{\alpha_s ,\beta_s ,N}(x+j-1) &&\hspace{-.9cm}{}_{1\le j\le k+1} \\
    \dosfilas{ h_{f}^{\alpha_s ,\beta_s ,N}(x+j-1) }{f\in F_p} \\
    \dosfilas{ \frac{1}{s}(h_{f}^{\alpha_s ,\beta_s ,N}(x+j-1)-\frac{h_{f}^{\alpha_s ,\beta_s ,N}(\tau_f)}{h_{-\ax-\bx-f-1}^{\alpha_s ,\beta_s ,N}(\tau_f)} h_{-\ax-\bx-f-1}^{\alpha_s ,\beta_s ,N}(x+j-1)) }{f\in F_s} \\
    \dosfilas{ \frac{1}{s}h_{f}^{\alpha_s ,\beta_s ,N}(x+j-1) }{f\in F_t} \\
    \dosfilas{ h_{f}^{\alpha_s ,\beta_s ,N}(x+j-1) }{f\in F_c}
      \end{array}
  \hspace{-.3cm}\right|,
\end{equation}
where $\tau_f$ is not a root of the polynomial $h_{-\ax-\bx-f-1}^{\alpha_s ,\beta_s ,N}$.
Since
$$
F_s\cup F_t=\{f:\lceil\frac{-\ax-\bx}{2}\rceil \le f\le -\ax-\bx-1\}\subset F,
$$
we deduce from the definition of $\sigma_F$ (\ref{elsig}) that $n-u_F\not \in F_s\cup F_t$.

We next take limit  in (\ref{defmexi22}) as $s\to 0$. It is easy to see that for $f\not\in\{\lceil\frac{-\ax-\bx}{2}\rceil ,\cdots, -\ax-\bx-1\}$
(i.e. $f\not \in F_s\cup F_t$), the Hahn polynomial $h_{f}^{\alpha_s ,\beta_s ,N}(x+j-1)$
goes to $h_{f}^{\ax ,\bx ,N}(x)$ which it is a polynomial of degree $f$. This is the reason why we have defined in Definition (\ref{losw})  $\Hh_{f}^{\ax ,\bx ,N;\M}(x)=h_{f}^{\ax ,\bx ,N}(x)$ when $f\not \in \{\lceil\frac{-\ax-\bx}{2}\rceil,\cdots, -\ax-\bx-1\}$.

If $f\in F_t$, that is $-\ax\le f\le -\ax-\bx-1$, a careful computation using (\ref{hpol}) shows that except for the multiplicative constant
$M/(M-1)$, the limit
\begin{equation}\label{lim1}
\lim_{s\to 0}\frac{1}{s}h_{f}^{\ax+s/M,\bx-s,N}(x)
\end{equation}
coincides with the combination of two Hahn polynomials in the right hand side of the identity (\ref{losw2}), and this is the reason why
we have defined $\Hh_{f}^{\ax ,\bx ,N,\M}(x)$ in that form when $-\ax\le f\le -\ax-\bx-1$. Note that, in Definition \ref{losw} we have taken an arbitrary parameter $M_{\ax+f}$ for each $f$, $-\ax\le f\le -\ax-\bx-1$. Hence, we conclude
\begin{equation}\label{lim1b}
\Hh_{f}^{\ax ,\bx ,N;\M}(x)=\frac{M_{\ax+f}}{M_{\ax+f}-1}\lim_{s\to 0}\frac{1}{s}h_{f}^{\ax+s/M_{\ax+f},\bx-s,N}(x).
\end{equation}

If $f\in F_s$, that is $\lceil\frac{-\ax-\bx}{2}\rceil \le f\le -\ax-1$, and $\tau_f$ is not a root of $h_{-f-\ax-\bx-1}^{\ax,\bx,N}$, it is not difficult to see that
$$
\lim_{s\to 0}\frac{h_{f}^{\ax+s/M,\bx-s,N}(\tau_f)}{h_{-f-\ax-\bx-1}^{\ax+s/M,\bx-s,N}(\tau_f)}=\frac{h_{f}^{\ax,\bx,N}(0)}{h_{-f-\ax-\bx-1}^{\ax,\bx,N}(0)}.
$$
Hence a careful computation shows that when $\tau_f$ is not a root of $h_{-f-\ax-\bx-1}^{\ax,\bx,N}$,
the limit
$$
\lim_{s\to 0}\frac{1}{s}
\left(h_{f}^{\ax+s/M,\bx-s,N}(x)-\frac{h_{f}^{\ax+s/M,\bx-s,N}(\tau_f)}{h_{-f-\ax-\bx-1}^{\ax+s/M,\bx-s,N}(\tau_f)}h_{-f-\ax-\bx-1}^{\ax+s/M,\bx-s,N}(x)\right)
$$
is always a polynomial of degree $f$. Any of these limit polynomials would be a good candidate for defining $\Hh_{f}^{\ax ,\bx ,N;\M}$. We choose $\tau_f=0$ (which it seems to be the simplest choice). Then, it easy to see that, except for the multiplicative constant
$M/(M-1)$, the limit
$$
\lim_{s\to 0}\frac{1}{s}
\left(h_{f}^{\ax+s/M,\bx-s,N}(x)-\frac{h_{f}^{\ax+s/M,\bx-s,N}(0)}{h_{-f-\ax-\bx-1}^{\ax+s/M,\bx-s,N}(0)}h_{-f-\ax-\bx-1}^{\ax+s/M,\bx-s,N}(x)\right)
$$
coincides with the combination of derivatives of the hypergeometric function in the right hand side of the identity (\ref{losw1}), and this is the reason why we have defined $\Hh_{f}^{\ax ,\bx ,N,\M}(x)$ in this form when $\lceil\frac{-\ax-\bx}{2}\rceil \le f\le -\ax-1$.
Note that in this case, the parameter $M$ does not play any role, and moreover, we have
\begin{equation}\label{lim2h}
\Hh_{f}^{\ax ,\bx ,N;\M}(x)=\lim_{s\to 0}\frac{1}{s}
\left(h_{f}^{\ax-s,\bx,N}(x)-\frac{h_{f}^{\ax-s,\bx,N}(0)}{h_{-f-\ax-\bx-1}^{\ax-s,\bx,N}(0)}h_{-f-\ax-\bx-1}^{\ax-s,\bx,N}(x)\right).
\end{equation}
A careful computation gives the following explicit expression for the polynomial $\Hh_f^{\ax,\bx,N;\M}$ in (\ref{losw1}) when $\lceil\frac{-\ax-\bx}{2}\rceil -1\le f\le -\ax-1$:
\begin{align*}\label{elhe1}
\frac{(-f,-x)_{-\ax-\bx-f}}{(-1)^{f-\ax-\bx}}&\sum_{j=0}^{2f+\ax+\bx}(j-N-\ax-\bx-f,j-\bx-f+1)_{2f+\ax+\bx-j}\\\nonumber
&\hspace{2.5cm}\times \frac{(-2f-\ax-\bx,-x-\ax-\bx-f)_j}{(-f-\ax-\bx)\binom{j-\ax-\bx-f}{j}}\\\nonumber
&\hspace{-.5cm}+\sum_{j=0}^{-\ax-\bx-f-1}\frac{(j-N,\ax+j+1)_{f-j}(-f,f+\ax+\bx+1,-x)_j}{j!}\\\nonumber
&\hspace{2.53cm}\times \sum_{i=0}^{j-1}\frac{(2f+\ax+\bx+1)}{(-f+i)(f+\ax+\bx+1+i)}.
\end{align*}

We can now adapt this approach for the case $\bx\le\ax\le-1$.

\begin{definition}\label{loswt} Let $\ax,\bx$ be negative integers with $\bx<\ax\le -1$ and $N$ a real number.
We define the sequence $(\Hh _n^{\ax,\bx,N;\M})_n$ of polynomials, $\Hh_n^{\ax,\bx,N;\M}$ of degree $n$, as follows.

For $n\in\{\lceil\frac{-\ax-\bx}{2}\rceil ,\cdots, -\bx-1\}$,
\begin{equation}\label{loswt1}
\Hh_{n}^{\ax ,\bx ,N;\M}(x)=\lim_{s\to 0}\frac{1}{s}
\left(h_{f}^{\ax-s,\bx,N}(x)-\frac{h_{f}^{\ax-s,\bx,N}(N)}{h_{-f-\ax-\bx-1}^{\ax-s,\bx,N}(N)}h_{-f-\ax-\bx-1}^{\ax-s,\bx,N}(x)\right);
\end{equation}
for $f\in\{-\bx ,\cdots, -\ax-\bx-1\}$,
\begin{equation}\label{loswt2}
\Hh_{f}^{\ax ,\bx ,N;\M}(x)=\frac{M_{\bx+f}}{M_{\bx+f}-1}\lim_{s\to 0}\frac{1}{s}h_{f}^{\ax+s/M_{\bx+f},\bx-s,N}(x);
\end{equation}
otherwise
\begin{equation}\label{loswt3}
\Hh _n^{\ax,\bx,N;\M}(x)=h _n^{\ax,\bx,N}(x).
\end{equation}
\end{definition}
With this definition, it is easy to see that the polynomials $(\Hh _n^{\ax,\bx,N;\M})_n$ inherit the symmetry of the Hahn polynomials with respect to the interchange of the parameters $\ax$ and $\bx$:
\begin{equation}\label{fi2a}
(-1)^n\Hh_{n}^{\ax,\bx,N;\M}(x)=\Hh_{n}^{\bx,\ax,N;\M^{-1}}(N-x)
\end{equation}
where we write $\M^{-1}$ for the set of parameters $\{1/M_0,1/M_1,\cdots \}$
(to get this identity is the reason why we have substituted $0$ by $N$ in (\ref{loswt1}) with respect to (\ref{lim2h})). Using (\ref{fi2a}), we can find explicit expressions for $\Hh _n^{\ax,\bx,N;\M}$ when $\bx\le \ax$ from those ones for $\Hh _n^{\bx,\ax,N;\M}$.

From Definition \ref{losw} and the identity (\ref{fi2a}), it is not difficult to see that the polynomial $\Hh_{n}^{\ax ,\bx ,N;\M}$ has degree $n$ and leading coefficient equal to
\begin{equation}\label{lcha}
\begin{cases} (-1)^{n+\ax+\bx}(2n+\ax+\bx)!(-\ax-\bx-n-1)!,&\lceil\frac{-\ax-\bx}{2}\rceil\le n\le -\ax-\bx-1,\\
(\ax+\bx+n+1)_n,&\mbox{otherwise}.
\end{cases}
\end{equation}

\bigskip

As explained in the Introduction, we can construct the polynomials $(\Pp _n^{\ax,\bx;\M})_n$ (Definition \ref{losj}) in two different ways. On the one hand, we can proceed similarly as we have done to get the polynomials $\Hh_{n}^{\ax ,\bx ,N;\M}(x)$ but using the Jacobi polynomials in the determinant (\ref{xj1}) instead of the Hahn polynomials in the determinant (\ref{xh1}). In doing that, for $f$ satisfying $-\ax\le f\le -\ax-\bx-1$, we get
\begin{equation}\label{lim1j}
\Pp_{f}^{\ax ,\bx ;\M}(x)=\frac{M_{\ax+f}}{M_{\ax+f}-1}\lim_{s\to 0}\frac{1}{s}P_{f}^{\ax+s/M_{\ax+f},\bx-s}(x),
\end{equation}
and when $\lceil\frac{-\ax-\bx}{2}\rceil \le f\le -\ax-1$, we have
\begin{equation}\label{lim2j}
\Pp_{f}^{\ax ,\bx ;\M}(x)=\lim_{s\to 0}\frac{1}{s}
\left(P_{f}^{\ax-s,\bx}(x)-\frac{P_{f}^{\ax-s,\bx}(1)}{P_{-f-a-b-1}^{\ax-s,\bx}(1)}P_{-f-\ax-\bx-1}^{\ax-s,\bx}(x)\right).
\end{equation}
A careful computation gives the following explicit expression for the polynomial $\Pp_f^{\ax,\bx;\M}$ in (\ref{losj1}) when $\lceil\frac{-\ax-\bx}{2}\rceil -1\le f\le -\ax-1$:
\begin{align*}\label{elhe12}
&\frac{(-1)^f(x+1))^{-\ax-\bx-f}}{(2f+\ax+\bx)!}\sum_{j=0}^{2f+\ax+\bx}\frac{(-2f-\ax-\bx)_j(j-\ax-f+1)_{2f+\ax+\bx-j}(x+1)^j}{2^{-\ax-\bx-f+j}(-f-\ax-\bx)\binom{j-\ax-\bx-f}{j}}
\\\nonumber&\hspace{2.5cm}+
\frac{(-1)^f}{f!}\sum_{j=0}^{-f-\ax-\bx-1}\frac{(\bx+j+1)_{f-j}(-f,f+\ax+\bx+1)_j(x+1)^j}{2^jj!}
\\\nonumber&\hspace{2.9cm}\times\left[\sum_{i=0}^{j-1}\frac{(2f+\ax+\bx+1)}{(-f+i)(f+\ax+\bx+1+i)}
-\sum_{i=0}^{2f+\ax+\bx}\frac{1}{-\ax-f+i}\right].
\end{align*}

On the other hand, we can construct the polynomials $(\Pp _n^{\ax,\bx;\M})_n$
taking into account that the exceptional Jacobi polynomials can be constructed by setting $x\to (1-x)N/2$ in the exceptional Hahn polynomials and taking limit as $N\to +\infty$ (i.e., in the same way as Hahn polynomials produce Jacobi polynomials (\ref{blmel})). Hence, if we set $x\to (1-x)N/2$ in $\Hh_{n}^{\ax ,\bx ,N;\M}$ and take limit as $N\to +\infty$, using (\ref{blmel}) we deduce
\begin{equation}\label{blmel2}
\lim_{N\to +\infty}\frac{\Hh_n^{\ax,\bx,N;\M}\left(\frac{(1-x)N}{2}\right)}{N^n}=(-1)^nn!\Pp_n^{\ax,\bx;\M}(x)
\end{equation}
uniform in compact sets of $\CC$.

We can now adapt any of these approaches for the case $\bx\le\ax\le-1$.

\begin{definition}\label{losjt} Let $\ax,\bx$ be negative integers with $\bx<\ax\le -1$.
We define the sequence $(\Pp _n^{\ax,\bx;\M})_n$ of polynomials, $\Pp_n^{\ax,\bx;\M}$ of degree $n$, as follows.

For $n\in\{\lceil\frac{-\ax-\bx}{2}\rceil ,\cdots, -\bx-1\}$,
\begin{equation}\label{losjt1}
\Pp_{n}^{\ax ,\bx ;\M}(x)=\lim_{s\to 0}\frac{1}{s}
\left(P_{f}^{\ax-s,\bx}(x)-\frac{P_{f}^{\ax-s,\bx}(-1)}{P_{-f-\ax-\bx-1}^{\ax-s,\bx}(-1)}P_{-f-\ax-\bx-1}^{\ax-s,\bx}(x)\right);
\end{equation}
for $f\in\{-\bx ,\cdots, -\ax-\bx-1\}$,
\begin{equation}\label{losjt2}
\Pp_{f}^{\ax ,\bx ;\M}(x)=\frac{M_{\bx+f}}{M_{\bx+f}-1}\lim_{s\to 0}\frac{1}{s}P_{f}^{\ax+s/M_{\bx+f},\bx-s}(x);
\end{equation}
otherwise
\begin{equation}\label{losjt3}
\Pp _n^{\ax,\bx;\M}(x)=h _n^{\ax,\bx}(x).
\end{equation}
\end{definition}
With this definition, it is easy to see that the polynomials $(\Pp _n^{\ax,\bx;\M})_n$ inherit the symmetry of the Jacobi polynomials with respect to the interchange of the parameters $\ax$ and $\bx$:
\begin{equation}\label{fi2ja}
(-1)^n\Pp_{n}^{\ax,\bx;\M}(x)=\Pp_{n}^{\bx,\ax;\M^{-1}}(-x).
\end{equation}
Using (\ref{fi2ja}), we can find explicit expressions for $\Pp _n^{\ax,\bx;\M}$ when $\bx\le \ax$ from those ones for $\Pp _n^{\bx,\ax;\M}$.

From Definition \ref{losj} and the identity (\ref{fi2ja}), it is not difficult to see that the polynomial $\Pp_{n}^{\ax ,\bx ;\M}$ has degree $n$ and leading coefficient equal to
\begin{equation}\label{lcja}
\begin{cases} \frac{(2n+\ax+\bx)!(-\ax-\bx-n-1)!}{(-1)^{n+\ax+\bx}2^nn!},&\lceil\frac{-\ax-\bx}{2}\rceil\le n\le -\ax-\bx-1,\\
\frac{(\ax+\bx+n+1)_n}{2^nn!},&\mbox{otherwise}.
\end{cases}
\end{equation}

\bigskip
The sequences $(\Hh _n^{\ax,\bx,N;\M})_n$ and $(\Pp _n^{\ax,\bx;\M})_n$ inherit many of the structural formulas that the Hahn and Jacobi polynomials enjoy, respectively. However, there are slight perturbations in these formulas (and the perturbations cause many problems in the proofs of the results).
Here it is an instance of these formulas which we will use later on (note the perturbation in (\ref{fi1a2}) with respect to (\ref{fi1a})). The proof is a matter of calculation and is omitted.

For $n\not \in \{\lceil\frac{-\ax-\bx}{2}\rceil,\cdots , f\le -\ax-1\}$ we have
\begin{align}\label{fi1a}
\Hh_{n}^{\ax,\bx,N;\M}(x)=(-1)^n\Hh_{n}^{\ax,\bx,-\ax-\bx-2-N;\M}(-x-\ax-1);
\end{align}
and for $n\in \{\lceil\frac{-\ax-\bx}{2}\rceil,\cdots , f\le -\ax-1\}$
\begin{align}\label{fi1a2}
\Hh_{n}^{\ax,\bx,N;\M}(x)&=(-1)^n\Hh_{n}^{\ax,\bx,-\ax-\bx-2-N;\M}(-x-\ax-1)\\\nonumber &\hspace{2.2cm}+\gamma_n^{\ax,\bx,N} h_{-n-\ax-\bx-1}^{\ax,\bx,N}(x),
\end{align}
where
\begin{equation}\label{lpc}
\gamma_n^{\ax,\bx,N}=(-1)^{\ax+\bx}(-n-\bx)_{2n+\ax+\bx+1}\sum_{j=0}^{2n+\ax+\bx}\frac{(N-n+1)_{2n+\ax+\bx+1}}{N-n+i+1}.
\end{equation}


\section{New exceptional Hahn families depending on an arbitrary number of continuous parameters}\label{sec3}
As in the rest of this paper, $\M$ denotes the set of real parameters $\M=\{M_0,M_1,\cdots \}$, $F$ a finite set of positive integers, and $\ax$ and $\bx$ denote negative integers. Along this Section, $N$ denotes a real number.

\begin{definition}\label{dll}
We associate to $\ax,\bx,N$, $\M$ and $F$ the sequence of polynomials
\begin{equation}\label{defmexi2}
h_n^{\ax, \bx, N;\M, F}(x)=\left|
  \begin{array}{@{}c@{}lccc@{}c@{}}
    & \Hh_{n-u_F}^{\ax ,\bx ,N;\M}(x+j-1) &&\hspace{-.4cm}{}_{1\le j\le n_F+1} \\
    \dosfilas{\Hh_{f}^{\ax ,\bx ,N;\M}(x+j-1) }{f\in F} \\
  \end{array}
  \hspace{-.4cm}\right|,
\end{equation}
where $n\in \sigma _F$ (\ref{elsig}) and $(\Hh_n^{\ax,\bx,N;\M})_n$ are the polynomials introduced in Definitions \ref{losw} and \ref{loswt} (depending on whether $\ax\le\bx$ or $\bx\le \ax$).
\end{definition}

Using \cite[Lemma 3.4]{DdI}, we deduce that  $h_n^{\ax,\bx,N;\M,F}$, $n\in \sigma _F$, is a polynomial of degree $n$ with leading coefficient equal to
\begin{equation*}\label{lcrn}
V_{F}\prod_{i\in \{n-u_F\},F}r_i^{\ax,\bx;\M}\prod_{f\in F}(f-n+u_F),
\end{equation*}
where $V_F$ is the Vandermonde determinant (\ref{defvdm}) and $r_i^{\ax,\bx;\M}$  is the leading coefficient of the  polynomial $\Hh_i^{\ax,\bx,N;\M}$ (see (\ref{lcha})).

We only have to consider the case $\ax\le \bx$ because it follows easily from (\ref{fi2a}) that
$$
h_n^{\ax, \bx, N;\M, F}(x)=(-1)^nh_n^{\bx, \ax, N;\M^{-1}, F}(N-x-n_F).
$$
Hence, from now on, we assume $\ax\le\bx$. There are some good reasons (which we explain in Section \ref{secu}) to assume also that
\begin{equation}\label{cis2}
\{-\bx,\cdots, -\ax-\bx-1\}\subset F.
\end{equation}

The sequence of polynomials $h_n^{\ax,\bx,N;\M, F}$, $n\in \sigma _F$, only depend on the parameters
$M_i$, $i\in F_{\bx}$, where
\begin{equation}\label{conp2}
F_\bx=\{0,1,\cdots, -\bx-1\}\setminus \{-\bx-f-1:f\in F\}.
\end{equation}
Indeed, if $f\in F$, $-\ax\le f\le -\ax-\bx-1$ and $-\ax-\bx-f-1\in F$ then we can use the polynomial $\Hh_{-\ax-\bx-f-1}^{\ax,\bx,N;\M}$ in the determinant (\ref{defmexi2}) to remove the second summand in the right hand side of the identity (\ref{losw2}) which defines the polynomial $\Hh_{f}^{\ax,\bx,N;\M}$. In doing that we remove the dependence of the polynomial $h_{n}^{\ax,\bx,N;\M,F}$ on the parameter $M_{\ax+f}$. More precisely, enumerate the polynomials $h_n^{\ax,\bx,N;\M, F}$, $n\in \sigma _F$, in accordance to the position of $n$ in the set $\sigma _F$ (i.e., the first polynomial would be $h_{u_F}^{\ax,\bx,N;\M, F}$)
and similarly enumerate the parameters $M_i$, $i\in F_{\bx}$, in accordance to the position of $i$ in the set $F_b$. it is then not difficult to check that for $i=1,\cdots, n_{F_b}$, the $i$-th polynomial $h_{n}^{\ax,\bx,N;\M,F}$ does not depend on the $(n_{F_b}-i)$-th parameter, and for $i\ge n_{F_b}+1$, the $i$-th  polynomial $h_{n}^{\ax,\bx,N;\M,F}$  depends on all the parameters $M_i$, $i\in F_b$.

The following property will be useful to show that the exceptional Legendre polynomials introduced in \cite{xle} are particular cases of the exceptional Jacobi polynomials introduced here.

\begin{remark}\label{hlp1} We first renormalize the polynomials $h_{n}^{\ax,\bx,N;\M,F}$ as follows:
$$
\bar h_{n}^{\ax,\bx,N;\M,F}=(\prod_{i=0}^{-\bx -1}(M_i-1))h_{n}^{\ax,\bx,N;\M,F},\quad n\in \sigma_F.
$$
For a finite set $J$ of nonnegative integers, we denote by $\M_J$ the particular case of the set of parameters $\M$ obtained by setting $M_j=1$, $j\in J$.

If $f\in\{-\ax,\cdots,-\ax-\bx-1\}\cap F$ and $-f-\ax-\bx-1\not \in F$, write $\tilde F=(F\setminus\{f\})\cup \{-f-\ax-\bx-1\}$ (i.e., we remove from $F$ the positive integer $f$ and include $-f-\ax-\bx-1$). Then
for $n\in \sigma_F$, $n\not =u_F-f-\ax-\bx-1$
$$
\bar h_{n-(2f+\ax+\bx+1)}^{\ax, \bx, N;\M, \tilde F}=\frac{1-M_{\ax+f}}{(-1)^{n_f}c_f}\bar h_n^{\ax, \bx, N;\M_{\{\ax+f\}}, F},
$$
where
$$
c_f=(-1)^{\bx+f}(f+\bx)!(f+\ax)!(-N-\ax-\bx-f-1)_{2f+\ax+\bx+1},
$$
and $n_f$ denotes the number of elements in $F$ which are bigger than  $-f-\ax-\bx-1$ and less than $f$;
similarly
$$
h_{u_F-f-\ax-\bx-1}^{\ax, \bx, N;\M, \tilde F}=(-1)^{n_f-1}h_{u_F-f-\ax-\bx-1}^{\ax, \bx, N;\bar \M_{\{\ax+f\}}, F}.
$$
Indeed, if $f\in\{-\ax,\cdots,-\ax-\bx-1\}\cap F$ then $0\le-f-\ax-\bx-1\le -\bx-1$. (\ref{losw2}) then gives
$\left[(M_{\ax+f}-1)\Hh_f^{\ax,\bx,N;\M}\right]_{\vert M_{\ax+f}=1}=c_f\Hh_{-f-\ax-\bx-1}^{\ax,\bx,N;\M}$, from where the remark follows easily.
\end{remark}

\subsection{Duality with Krall dual Hahn families}
As for the standard families, the tool we use to prove that the polynomials introduce in Definition \ref{dll} are exceptional Hahn polynomials is the duality of these polynomials
and certain  Krall dual Hahn polynomials. In this case, we consider the Krall dual Hahn polynomials constructed in \cite{dundh} which we next display.
We do not need to assume yet that $N$ is a positive integer but assume that $\ax\le\bx\le-1$ and that the finite set $F$ satisfies (\ref{cis}).

Using again the notation (\ref{mn}) (i.e. $a=-\ax,b=-\bx$ and $\Nn=N+\ax+\bx$) we set
\begin{equation}\label{losww}
W_n^{a,b,\Nn;\M}(x)=\Hh_n^{\ax,\bx,-2-\Nn;\M}(x),
\end{equation}
where $(\Hh_n^{\ax,\bx,-2-\Nn;\M})_n$ is the sequence of polynomials introduced in Definition \ref{losw}.
Consider the finite set of integers $U_F$ (\ref{elu})  and define the polynomials
\begin{equation}\label{qusmei2u}
q_n^{a,b,\Nn;\M,U_F}(x)=
\frac{\left|
  \begin{array}{@{}c@{}lccc@{}c@{}}
  & (-1)^{j-1}R_{n-a+j-1}^{a,b,\Nn}(x) &&\hspace{-2.6cm}{}_{1\le j\le n_F+1} \\
    \dosfilas{(-1)^{j-1}R_{n-a+j-1}^{a,b,\Nn}(\lambda^{a,b}(u))}{u\in U_{F_-}}\\
    \dosfilas{(a+b+\Nn-n-j+2)_{j-1}W_{f}^{a,b,\Nn;\M}(-n+a-j) }{f\in \{b,b+1,\cdots, a+b-1\}}\\
    \dosfilas{(-1)^{j-1}R_{n-a+j-1}^{a,b,\Nn}(\lambda^{a,b}(u))}{u\in U_{F_+}}
  \end{array}\hspace{-.5cm}\right|}{(-1)^{(n+a+b)(n_{U_F}+1)+\binom{a+b}{2}+\binom{b}{2}+a}\prod_{u\in U_F}(x-\lambda^{a,b}(u))}.
\end{equation}

Under mild conditions on the parameters, we prove in \cite{dundh} that the polynomials $q_n^{a,b,\Nn;\M,U_F}(x)$, $n\ge 0$, are orthogonal with respect to the measure $\nu_{a,b,\Nn}^{\M,U_F}$ (see (\ref{lctnu})). In particular, that is the case
when $N$ is a positive integer and the measure $\nu_{a,b,\Nn}^{\M,U_F}$ is positive; the polynomials (\ref{qusmei2u}) have then positive norm when $n=0,\cdots, N-n_F$. Although it is not important for the construction of the exceptional Hahn polynomials,
we also prove in \cite{dundh} that these polynomials (\ref{qusmei2u}) are eigenfunctions of a higher order difference operator.
We point out that in the determinant which appears in the right hand side of (\ref{qusmei2u}), we have rearranged rows and columns and renormalized with a sign with respect to the definition of the polynomials $q_n^{a,b,\Nn;\M,U_F}(x)$ in \cite{dundh}.

We next prove that the  polynomials $h_n^{\ax,\bx,N;\M,F}$ (\ref{defmexi}) are related by duality with the polynomials $q_n^{a,b,\Nn;\M,U_F}$.

\begin{lemma}\label{lem3.2}
If $n\ge 0$ and $v\in \NN\setminus F$, then
\begin{align}\label{duaqnrn}
\xi_{n}&h_{v+u_F}^{\ax,\bx,N;\M , F}(n)=\kappa_\M\tau_v\zeta_{v+\ax+\bx}\theta_{v}^{\M}q_{n}^{a,b,\Nn;\M,U_F}(\lambda^{a,b}(v+\ax+\bx)),
\end{align}
where
\begin{align*}
\xi_n&=(N-n+1)_{n+\ax+\bx}^{\ax+n_F+1}\prod_{j=1}^{n_F+1}(N-n-j+2)_{j-1},\\
\zeta_v&=(-N-\ax-\bx)_v(v+1)_{-\ax}(v-\ax-\bx)!,\\
\kappa_\M&=\left(\prod_{u\in U_{F_-}}(1-M_{-u+\ax-1})\right)\left(\prod_{u\in U_F}\zeta_u\right),\\
\tau_v&=\prod_{u\in U_F}(\lambda^{-\ax,-\bx}(v+\ax+\bx)-\lambda^{-\ax,-\bx}(u)),\\
\theta_{v}^{\M}&= \begin{cases}1-M_{-\bx-1-v},&0\le v\le-\bx-1,\\ 1,&\mbox{otherwise}.\end{cases}
\end{align*}
\end{lemma}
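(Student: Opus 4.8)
The plan is to establish the duality identity (\ref{duaqnrn}) by manipulating the determinant (\ref{defmexi2}) defining $h_{v+u_F}^{\ax,\bx,N;\M,F}(n)$ and showing that, after using the basic Hahn--dual Hahn duality (\ref{sdm2b}) entry by entry, it transforms into a multiple of the determinant (\ref{qusmei2u}) defining $q_n^{a,b,\Nn;\M,U_F}$. First I would substitute $x=n$ in (\ref{defmexi2}), so that every entry is of the form $\Hh_g^{\ax,\bx,N;\M}(n+j-1)$ for $g\in F\cup\{v\}$. The key mechanism is that (\ref{sdm2b}) converts a Hahn polynomial evaluated at the integer argument $n+j-1$ into a dual Hahn polynomial $R_{n+j-1}^{a,b,N}(\lambda^{a,b}(g))$ evaluated at the spectral point $\lambda^{a,b}(g)$, with appropriate Pochhammer normalizing factors. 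Since the three defining cases of $\Hh_g$ in Definition \ref{losw} behave differently, I expect the argument to split into three regimes according to whether $g$ falls in $F_c$-type indices (where $\Hh_g=h_g$ and (\ref{sdm2b}) applies directly), the range $-\ax\le g\le-\ax-\bx-1$ (where (\ref{losw2}) produces the combination responsible for the $\theta_v^\M$ and $\kappa_\M$ factors, and for the rows indexed by $u\in U_{F_-}$), or the range $\lceil\frac{-\ax-\bx}{2}\rceil\le g\le-\ax-1$ (where the $s$-derivative in (\ref{losw1}) corresponds, via differentiating the duality (\ref{sdm2b}) in the parameter, to the $W_f$-rows in (\ref{qusmei2u})).

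Next I would track the bookkeeping of normalizing constants. Applying (\ref{sdm2b}) to each entry pulls out a factor depending on the row index $g$ and on $j$; the factors depending on $j$ (i.e. on the column) multiply the whole column and can be extracted from the determinant, accounting for the Pochhammer products in $\xi_n$ and for the $(a+b+\Nn-n-j+2)_{j-1}$ weights attached to the $W_f$-rows in (\ref{qusmei2u}). The factors depending only on $g$ (the row) multiply individual rows and assemble into $\tau_v$, $\zeta_{v+\ax+\bx}$, $\kappa_\M$ and the sign prefactor in the denominator of (\ref{qusmei2u}). A central point is that the relation (\ref{losww}) $W_n^{a,b,\Nn;\M}=\Hh_n^{\ax,\bx,-2-\Nn;\M}$ together with $\Nn=N+\ax+\bx$ matches the shift in the third argument of the Hahn polynomials coming through the duality; I would use (\ref{fi1}) and possibly (\ref{fi1a2}) to reconcile the argument $-2-\Nn$ appearing in $W$ with the argument $N$ appearing in $\Hh$. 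The perturbation term $\gamma_n^{\ax,\bx,N}h_{-n-\ax-\bx-1}^{\ax,\bx,N}$ in (\ref{fi1a2}) is exactly the kind of extra piece that, in the determinant, gets absorbed by row operations using a matching row already present (because (\ref{cis}) guarantees the complementary indices $-g-\ax-\bx-1$ also belong to $F$), so that the net effect is only a normalization change.

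The main obstacle I anticipate is the $s$-derivative rows coming from (\ref{losw1}): these are not bare dual Hahn values but $s$-derivatives of the parametrized family $\varphi_g^{\ax,\bx,N}(s,\cdot)$ at $s=0$, and I must show that differentiating the duality relation (\ref{sdm2b}) in a suitable parameter direction produces precisely the rows $(a+b+\Nn-n-j+2)_{j-1}W_f^{a,b,\Nn;\M}(-n+a-j)$ in (\ref{qusmei2u}), with the correct argument $-n+a-j$ rather than a spectral variable $\lambda^{a,b}(\cdot)$. Handling this cleanly will likely require differentiating the hypergeometric form of (\ref{sdm2b}) with respect to the first upper parameter and then re-expressing the result through $W_f=\Hh_f^{\ax,\bx,-2-\Nn;\M}$; the bookkeeping of which limit-defined rows of $h_n^{\ax,\bx,N;\M,F}$ correspond to the $U_{F_-}$-rows versus the $W_f$-rows of $q_n$ is delicate. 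I would verify the constants by checking the top degree and one low-degree case, and by specializing to $F=\emptyset$ or to the standard (non-degenerate) situation where the identity must reduce to the known duality of \cite{duha}. Once the entrywise correspondence and the constant matching are in place, the identity (\ref{duaqnrn}) follows, since both sides are then the same determinant up to the scalar factor $\kappa_\M\tau_v\zeta_{v+\ax+\bx}\theta_v^\M/\xi_n$.
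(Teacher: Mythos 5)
Your skeleton---dualize the determinant entry by entry, split by row type, absorb perturbation terms by row operations, and track row and column constants---is indeed the paper's strategy, but two of the concrete mechanisms you commit to would fail as stated. First, your claimed entry-wise conversion is wrong: you assert that (\ref{sdm2b}) turns an entry into $R_{n+j-1}^{a,b,N}(\lambda^{a,b}(g))$, whereas the actual entries of (\ref{qusmei2u}) are $R_{n-a+j-1}^{a,b,\Nn}(\lambda^{a,b}(g+\ax+\bx))$: the degree is shifted by $-a$, the third parameter is $\Nn=N+\ax+\bx$, and the spectral point is shifted by $\ax+\bx$. The Hahn polynomials in (\ref{defmexi2}) carry the \emph{negative} parameters $\ax,\bx$, and applying (\ref{sdm2b}) to them directly would produce dual Hahn polynomials with those same negative parameters, which are not the entries of $q_n^{a,b,\Nn;\M,U_F}$. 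The missing ingredient is (\ref{cph}): one must first trade $h_{v}^{\ax,\bx,N}$ for the positive-parameter polynomial $h_{v+\ax+\bx}^{a,b,\Nn}$ at a shifted argument, and only then apply (\ref{sdm2b}) (legitimate when $n+j-1\ge -\ax$; when $n+j-1\le-\ax-1$ both sides vanish and the identity holds trivially). This is the paper's first and fifth steps, and it is where all three shifts come from; without it the two determinants simply do not match.

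Second, your plan for the $s$-derivative rows---differentiating the hypergeometric form of (\ref{sdm2b}) in a parameter---is a misdirection, and the worry you voice (getting the linear argument $-n+a-j$ rather than a spectral variable) is precisely the symptom. In the paper's proof those rows never pass through the duality at all. Every row with $f\in\{-\bx,\cdots,-\ax-\bx-1\}$, whichever of (\ref{losw1}), (\ref{losw2}), (\ref{losw3}) defines it, is matched to the middle block of (\ref{qusmei2u}) purely by the reflection identities (\ref{fi1a}) and (\ref{fi1a2}) combined with (\ref{losww}): since $-\ax-\bx-2-N=-2-\Nn$ and $-(n+j-1)-\ax-1=-n+a-j$, the reflected polynomial is exactly $W_f^{a,b,\Nn;\M}(-n+a-j)$, and the extra term $\gamma_f^{\ax,\bx,N}h_{-f-\ax-\bx-1}^{\ax,\bx,N}$ in (\ref{fi1a2}) is first removed by a row operation against the row of index $-f-\ax-\bx-1$, which lies in $F$ by (\ref{cis}). (Proving the reflection identities themselves amounts to differentiating the symmetry (\ref{fi1}) in the parameter, not the duality.) Relatedly, your block bookkeeping should be fixed: the $U_{F_-}$ rows of (\ref{qusmei2u}) do not come from the indices $-\ax\le g\le -\ax-\bx-1$; they come from the ordinary Hahn rows $f\in F$ with $1\le f\le-\bx-1$, dualized after a row operation against the (\ref{losw2})-rows (this row operation is what generates the factors $1-M_{-u+\ax-1}$ in $\kappa_\M$, and likewise $\theta_v^{\M}$ when $0\le v\le-\bx-1$), while the (\ref{losw2})-rows themselves end up in the $W$ block. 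Until these two mechanisms are corrected, the proposed argument does not go through.
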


\begin{proof}
We  dualize each  entry $(i,j)$, $i,j=1\cdots, n_F+1$, of (the determinant which defines) the polynomial $h_{v+u_F}^{\ax,\bx,N;\M ,F}(n)$ (\ref{defmexi})
and compare with the entry $(i,j)$ of (the determinant which defines) the polynomial $q_{n}^{a,b,\Nn;\M,U_F}(\lambda^{a,b}(v+\ax+\bx))$ (\ref{qusmei2u}).

We proceed in several steps, depending on the rows and on $v\in \NN\setminus F$.

\noindent
\textit{First step.} Consider the first row $i=1$ and assume that $v\ge -\ax-\bx$.
From (\ref{defmexi}) and Definition \ref{losw}, we deduce that the entry $(1,j)$, $j=1,\cdots , n_F+1$, of $h_{v+u_F}^{\ax,\bx,N;\M,F}(n)$ has the form $h_{v}^{\ax,\bx,N}(n+j-1)$. We then prove that
\begin{align}\label{ptg}
&(N-n+1)_{n+\ax+\bx}(N-n-j+2)_{j-1}h_{v}^{\ax,\bx,N}(n+j-1)\\\nonumber
&\hspace{1cm}=
(v+\ax+\bx+1)_{-\ax}v!(-N-\ax-\bx)_{v+\ax+\bx}\\\nonumber&\hspace{2cm}\times (-1)^{n+j-1+\ax+\bx} R^{a,b,\Nn}_{n-a+j-1}(\lambda^{a,b}(v+\ax+\bx)).
\end{align}

Note that $R^{a,b,\Nn}_{n-a+j-1}(\lambda^{a,b}(v+\ax+\bx))$ is the  entry $(1,j)$  of the determinant which defines the polynomial $q_{n}^{a,b,\Nn;\M,U_F}(\lambda^{a,b}(v+\ax+\bx))$ (\ref{qusmei2u}).
Indeed, if $n+j-1\ge -\ax$, by applying to $h_{v}^{\ax,\bx,N}(n+j-1)$ firstly the identity (\ref{cph}) and then the duality (\ref{sdm2b}) (to do that we need $n+j-1\ge -\ax$), we get (\ref{ptg}) after straightforward computations. For $n+j-1\le -\ax-1$, the identity (\ref{ptg}) holds because both sides are equal to zero: the left hand side because for $v\ge -\ax-\bx$, $h_{v}^{\ax,\bx,N}(n+j-1)=0$ for $n+j-1=0,\cdots,-\ax-1$ (see (\ref{cos3})), and the right hand side because $n-a+j-1\le -1$ and then $R^{a,b,\Nn}_{n-a+j-1}=0$.

\medskip

\noindent
\textit{Second step.} Consider the first row $i=1$ and assume now that $v\le -\ax-\bx-1$.
Since $v\not \in F$ (see (\ref{elsig})), we have $0\le v\le -\bx-1$. If write $g=-\ax-\bx-1-v$, then $-\ax\le g\le -\ax-\bx-1$. In view of (\ref{losw2}), we see that the polynomial $h_{v}^{\ax,\bx,N}(x)$ (which defines the first row $i=1$ of $h_{v+u_F}^{\ax,\bx,N;\M,F}(n)$) is the polynomial in the first summand of $\Hh_{g}^{\ax,\bx,N;\M}(x)$. Since $\Hh_{g}^{\ax,\bx,N;\M}(x)$ in turn defines the $(2+n_{U_{F_-}}+g+\bx)$-th row of the determinant (\ref{defmexi}) (from which  $h_{v+u_F}^{\ax,\bx,N;\M,F}(n)$ is defined), the polynomial $h_{v+u_F}^{\ax,\bx,N;\M,F}(n)$ remains the same if we add the $(2+n_{U_{F_-}}+g+\bx)$-th row of the determinant (\ref{defmexi}) multiplied by
$$
\frac{(M_{\ax+g}-1)(-1)^{\bx+g+1}}{(g+\ax)!(g+\bx)!(-N-\ax-\bx-g-1)_{2g+\ax+\bx+1}}
$$
to the first row of the determinant (\ref{defmexi}). In doing that, we deduce  from (\ref{losw2}) that the entry $(1,j)$ in the first row of $h_{v+u_F}^{\ax,\bx,N;\M,F}(n)$ can be taken to be
$$
\tilde h_{v,n}^{1,j}=\frac{(1-M_{\ax+g})(-\ax-\bx-g-1)!(-n-j+1)_{-\ax}}{(g+\ax)!(-N-\ax-\bx-g-1)_{2g+\ax+\bx+1}}h_{\ax+g}^{-\ax,\bx,N+\ax}(n+j-1+\ax),
$$
$j=1,\cdots, n_F+1$. If $n+j-1+\ax\ge 0$, by applying to $h_{\ax+g}^{-\ax,\bx,N+\ax}(n+j-1+\ax)$  the duality (\ref{sdm2b}) and then the identity (\ref{cp1}), we get after careful computations
\begin{align}\label{ptg1}
&(-1)^{n+j-1+\ax+\bx}(N-n+1)_{n+\ax+\bx}(N-n-j+2)_{j-1}\tilde h_{v,n}^{1,j}\\\nonumber
&\hspace{1.5cm}=
(v+\ax+\bx+1)_{-\ax}v!(-N-\ax-\bx)_{v+\ax+\bx}\\\nonumber&\hspace{3cm}\times (1-M_{-\bx-1-v})R^{a,b,\Nn}_{n-a+j-1}(\lambda^{a,b}(v+\ax+\bx)).
\end{align}
For $n+j-1\le -\ax-1$, the identity (\ref{ptg1}) holds because both sides are equal to zero: the left hand side because the factor
$(-n-j+1)_{-\ax}=0$ in $\tilde h_{v,n}^{1,j}$, and the right hand side because $n-a+j-1\le -1$ and then $R^{a,b,\Nn}_{n-a+j-1}=0$.

\medskip

\noindent
\textit{Third step.} Consider next the rows $i=2,\cdots, 1+n_{U_{F_-}}$. The entry $(i,j)$, $j=1,\cdots , n_F+1$, of $h_{v+u_F}^{\ax,\bx,N;\M,F}(n)$ has now the form $h_{f}^{\ax,\bx,N}(n+j-1)$, $f\in F, 1\le f\le -\bx-1$ (see (\ref{elu1})). Hence $u=f+\ax+\bx\in U_{F_-}$. Proceeding as in the second step (here $f$ plays the role of $v$), we conclude that the entry $(i,j)$ in the $i$-th row of $h_v^{\ax,\bx,N;\M,F}(n)$ can be taken to be
$$
\tilde h_{v,n}^{1,j}=\frac{(1-M_{-u+\ax-1})f!(-n-j+1)_{-\ax}}{(-\bx-f-1)!(-N+f)_{-\ax-\bx-2f-2}}h_{-\bx-f-1}^{-\ax,\bx,N+\ax}(n+j-1+\ax),
$$
$j=1,\cdots, n_F+1$, and
\begin{align}\label{ptg2}
&(-1)^{n+j-1+\ax+\bx}(N-n+1)_{n+\ax+\bx}(N-n-j+2)_{j-1}\tilde h_{v,n}^{i,j}\\\nonumber
&\hspace{1.5cm}=
(u+1)_{-\ax}(u-\ax-\bx)!(-N-\ax-\bx)_{u}\\\nonumber&\hspace{3cm}\times (1-M_{-u+\ax-1})R^{a,b,\Nn}_{n-a+j-1}(\lambda^{a,b}(u)).
\end{align}

\medskip

\noindent
\textit{Fourth step.} Consider next the rows $i=1+n_{F_-}+r$, $r=1,\cdots,-\ax$.
From (\ref{defmexi}) and Definition \ref{losw}, we deduce that the entry $(i,j)$, $j=1,\cdots , n_F+1$, of $h_{v+u_F}^{\ax,\bx,N;\M,F}(n)$ has the form
$\Hh_f^{\ax,\bx,N;\M}(n+j-1)$, $f=-\bx,\cdots ,-\ax-\bx-1$. If $f\not \in \{\lceil\frac{-\ax-\bx}{2}\rceil,\cdots, -\ax-1\}$, using
(\ref{losww}) and then (\ref{fi1a}) we have
\begin{align}\label{ptg3}
&(N-n-j+2)_{j-1}\Hh_f^{\ax,\bx,N;\M}(n+j-1)\\\nonumber &\hspace{2cm}=(-1)^f(N-n-j+2)_{j-1}W_f^{a,b,\Nn;\M}(-n+a-j).
\end{align}
Note that $(N-n-j+2)_{j-1}W_f^{a,b,\Nn;\M}(-n+a-j)$ is the  entry $(i,j)$  of the determinant which defines the polynomial $q_{n}^{a,b,\Nn;\M,U_F}(\lambda^{a,b}(v+\ax+\bx))$ (\ref{qusmei2u}).

If $f\in \{\lceil\frac{-\ax-\bx}{2}\rceil,\cdots, -\ax-1\}$, then $g=-f-\ax-\bx-1\in \{-\bx,\cdots,\lceil\frac{-\ax-\bx}{2}\rceil-1\}$, and
since $\Hh_{g}^{\ax,\bx,N;\M}(n)=h_g^{\ax,\bx,N}(n)$ in turn defines the $(2+n_{U_{F_-}}+g+\bx)$-th row of the determinant (\ref{defmexi}) (from which  $h_{v+u_F}^{\ax,\bx,N;\M,F}(n)$ is defined), the polynomial $h_{v+u_F}^{\ax,\bx,N;\M,F}(n)$ remains the same if we add the $(2+n_{U_{F_-}}+g+\bx)$-th row of the determinant (\ref{defmexi}) multiplied by $-\gamma_{f}^{\ax,\bx,N}$ (\ref{lpc}) to the $(2+n_{U_{F_-}}+f+\bx)$-th row of the determinant (\ref{defmexi}). In doing that, we deduce   that the entries of the $(2+n_{U_{F_-}}+f+\bx)$-th row of $h_{v+u_F}^{\ax,\bx,N;\M,F}(n)$ can be taken to be
$$
\Hh_{f}^{\ax,\bx,N;\M}(n+j-1)-\gamma_{f}^{\ax,\bx,N}h_{-f-\ax-\bx-1}^{\ax,\bx,N}(n+j-1).
$$
Using (\ref{losww}) and then (\ref{fi1a2}) we have
\begin{align}\label{ptg3i}
&(N-n-j+2)_{j-1}(\Hh_{f}^{\ax,\bx,N;\M}(n+j-1)-\gamma_{f}^{\ax,\bx,N}h_{-f-\ax-\bx-1}^{\ax,\bx,N}(n+j-1))\\\nonumber &\hspace{2cm}=(-1)^f(N-n-j+2)_{j-1}W_f^{a,b,\Nn;\M}(-n+a-j).
\end{align}

\medskip

\noindent
\textit{Fifth step.} Consider finally the rows $i=2-\ax+n_{U_{F_-}},\cdots, n_{F}+1$.
The entry $(i,j)$, $j=1,\cdots , n_F+1$, of $h_{v+u_F}^{\ax,\bx,N;\M,F}(n)$ has the form $h_{f}^{\ax,\bx,N}(n+j-1)$, for $f\in F$ and $f\ge -\ax-\bx$ (see (\ref{elu2})). Hence $u=f+\ax+\bx\in U_{F_+}$. Proceeding as in the first step (here $f$ plays the role of $v$), we have
\begin{align}\label{ptg4}
&(-1)^{n+j-1+\ax+\bx}(N-n+1)_{n+\ax+\bx}(N-n-j+2)_{j-1}h_{f}^{\ax,\bx,N}(n+j-1)\\\nonumber&\hspace{1.5cm} =
(u+1)_{-\ax}(u-\ax-\bx)!(-N-\ax-\bx)_{u}R^{a,b,\Nn}_{n-a+j-1}(\lambda^{a,b}(u)).
\end{align}
\bigskip

We can now prove the duality (\ref{duaqnrn}) from the identities (\ref{ptg}), (\ref{ptg1}), (\ref{ptg2}), (\ref{ptg3}), (\ref{ptg3i}) and (\ref{ptg4}).

\end{proof}

\subsection{The second order difference operator}
We next use the duality stated in Lemma \ref{lem3.2} to construct a second order difference operator with respect to which the
polynomials $h_{n}^{\ax,\bx,N;\M,F}(x)$, $n\in \sigma_F$, are eigenfunctions. The second order difference operator is constructed from the polynomials
$\Omega _{\M,F}^{\ax, \bx, N}$ defined in (\ref{defom}) and $\Lambda _{\M,F}^{\ax,\bx,N}(x)$ defined by
\begin{align}\label{deflam}
\Lambda _{\M,F}^{\ax, \bx, N}(x)&= \left|
  \begin{array}{@{}c@{}lccc@{}c@{}}
    & &&\hspace{-1.3cm}{}_{1\le j\le n_F+1,j\not =n_F} \\
    \dosfilas{ \Hh_{f}^{\ax ,\bx ,N}(x+j-1) }{f\in F}
  \end{array}
  \hspace{-.4cm}\right|.
\end{align}
Both are polynomials of degree $n_F+u_F$. The leading coefficient of $\Omega _{\M,F}^{\ax, \bx, N}$ is
\begin{equation}\label{lcrnw}
V_{F}\prod_{i\in F}r_i^{\ax,\bx;\M},
\end{equation}
where $V_F$ is the Vandermonde determinant (\ref{defvdm}) and $r_i^{\ax,\bx;\M}$  is the leading coefficient of the  polynomial $\Hh_i^{\ax,\bx,N;\M}$ (see (\ref{lcha})).

We need some more definitions.
We also define the sequences
\begin{align}\label{lasph}
\Phi_{n;\M,U_F}^{a,b,\Nn}&=
\frac{\left|
  \begin{array}{@{}c@{}lccc@{}c@{}}
  & &&\hspace{-2.6cm}{}_{1\le j\le n_F} \\
    \dosfilas{(-1)^{j-1}R_{n-a+j-1}^{a,b,\Nn}(\lambda^{a,b}(u))}{u\in U_{F_-}}\\
    \dosfilas{(a+b+\Nn-n-j+2)_{j-1}W_{f}^{a,b,\Nn;\M}(-n+a-j) }{f\in \{b,b+1,\cdots, a+b-1\}}\\
    \dosfilas{(-1)^{j-1}R_{n-a+j-1}^{a,b,\Nn}(\lambda^{a,b}(u))}{u\in U_{F_+}}
  \end{array}\hspace{-.6cm}\right|}{(-1)^{(n+a+b)(n_{U_F}+1)+\binom{a+b}{2}+\binom{b}{2}+a}},\\\label{lasps}
  \Psi_{n;\M,U_F}^{a,b,\Nn}&=
\frac{\left|
  \begin{array}{@{}c@{}lccc@{}c@{}}
  & &&\hspace{-2.9cm}{}_{1\le j\le n_F+1; j\not=n_F} \\
    \dosfilas{(-1)^{j-1}R_{n-a+j-1}^{a,b,\Nn}(\lambda^{a,b}(u))}{u\in U_{F_-}}\\
    \dosfilas{(a+b+\Nn-n-j+2)_{j-1}W_{f}^{a,b,\Nn;\M}(-n+a-j) }{f\in \{b,b+1,\cdots, a+b-1\}}\\
    \dosfilas{(-1)^{j-1}R_{n-a+j-1}^{a,b,\Nn}(\lambda^{a,b}(u))}{u\in U_{F_+}}
  \end{array}\hspace{-.65cm}\right|}{(-1)^{(n+a+b)(n_{U_F}+1)+\binom{a+b}{2}+\binom{b}{2}+a}}.
\end{align}

From Lemma \ref{lem3.2}, we can deduce the duality between the polynomials $\Omega _{\M,F}^{\ax,\bx,N}$ (\ref{defom}), $\Lambda _{\M,F}^{\ax,\bx,N}$ (\ref{deflam})
and the sequences (\ref{lasph}) and (\ref{lasps}), respectively:

\begin{align}\label{duomph}
\xi _n\Omega _{\M,F}^{\ax,\bx,N}(n)&=(-1)^{n+\bx}(N-n-n_F+1)_{n+\ax+\bx+n_F}\kappa_\M\Phi_n^{a,b,\Nn;\M,U_F} , \\\label{dulaps}
\xi _n\Lambda _{\M,F}^{\ax,\bx,N}(n)&=(-1)^{n+\bx}(N-n-n_F+2)_{n+\ax+\bx+n_F-1}\kappa_\M\Psi_n^{a,b,\Nn;\M,U_F}.
\end{align}

\begin{theorem}\label{th3.3} The polynomials $h_n^{\ax,\bx,N;\M,F}$ (\ref{defmexi}), $n\in \sigma _F$, are common eigenfunctions of the second order difference operator
\begin{equation}\label{sodomex}
D=h_{-1}(x)\Sh_{-1}+h_0(x)\Sh_0+h_1(x)\Sh_{1},
\end{equation}
where
\begin{align*}
h_{-1}(x)&=\frac{x(x-\bx-N-1)\Omega _{\M,F}^{\ax,\bx,N}(x+1)}{\Omega _{\M,F}^{\ax,\bx,N}(x)},\\
h_0(x)&=-(x+n_F)(x-\bx-N-1+n_F)-(x+\ax+1+n_F)(x-N+n_F)\\\nonumber &\quad \quad +\Delta\left(\frac{(x+\ax+n_F)(x-N-1+n_F)\Lambda _{\M,F}^{\ax, \bx, N}(x)}{\Omega _{\M,F}^{\ax,\bx,N}(x)}\right),\\
h_1(x)&=\frac{(x+\ax+n_F+1)(x-N+n_F)\Omega _{\M,F}^{\ax,\bx,N}(x)}{\Omega _{\M,F}^{\ax,\bx,N}(x+1)},
\end{align*}
and $\Delta $ denotes the first order difference operator $\Delta f=f(x+1)-f(x)$. Moreover $D(h_n^{\ax,\bx,N;\M,F})=\lambda^{\ax,\bx}(n-u_F)h_n^{\ax,\bx,N;\M,F}$, $n\in \sigma_F$.
\end{theorem}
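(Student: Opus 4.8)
The plan is to transfer the three–term recurrence relation satisfied by the Krall dual Hahn polynomials $q_n^{a,b,\Nn;\M,U_F}$ into a second order difference equation for the polynomials $h_n^{\ax,\bx,N;\M,F}$, using the duality established in Lemma~\ref{lem3.2}. Since under mild conditions the $q_n^{a,b,\Nn;\M,U_F}$ are orthogonal polynomials (this is proved in \cite{dundh}), they satisfy, for $n\ge 1$,
\[
\lambda\, q_n^{a,b,\Nn;\M,U_F}(\lambda)=a_n q_{n+1}^{a,b,\Nn;\M,U_F}(\lambda)+b_n q_n^{a,b,\Nn;\M,U_F}(\lambda)+c_n q_{n-1}^{a,b,\Nn;\M,U_F}(\lambda),
\]
with $a_n,b_n,c_n$ rational functions of $n$. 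Writing $k_n,\tilde k_n$ for the two highest coefficients of $q_n^{a,b,\Nn;\M,U_F}$ in $\lambda$, the determinantal definition (\ref{qusmei2u}) shows that $k_n$ is proportional to the leading coefficient of $R_{n-a+n_F}^{a,b,\Nn}$ times the minor $\Phi_{n;\M,U_F}^{a,b,\Nn}$ (\ref{lasph}), and that $\tilde k_n$ involves $\Psi_{n;\M,U_F}^{a,b,\Nn}$ (\ref{lasps}). Hence $a_n=k_n/k_{n+1}$ is a ratio of consecutive $\Phi$'s (times a ratio of leading coefficients of dual Hahn polynomials), while $b_n=\tilde k_n/k_n-\tilde k_{n+1}/k_{n+1}$ is a first difference of the quotients $\Psi_n/\Phi_n$.

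Fix a degree $m\in\sigma_F$ and set $v=m-u_F\in\NN\setminus F$. Evaluating the recurrence at $\lambda=\lambda^{a,b}(v+\ax+\bx)$ and rewriting each of $q_n^{a,b,\Nn;\M,U_F}$, $q_{n\pm1}^{a,b,\Nn;\M,U_F}$ at that point through (\ref{duaqnrn}), the $v$–dependent constant $\kappa_\M\tau_v\zeta_{v+\ax+\bx}\theta_v^{\M}$ factors out of every term and cancels; dividing by $\xi_n$ then yields, for all integers $n\ge 1$,
\[
a_n\tfrac{\xi_{n+1}}{\xi_n}h_m^{\ax,\bx,N;\M,F}(n+1)+b_n\,h_m^{\ax,\bx,N;\M,F}(n)+c_n\tfrac{\xi_{n-1}}{\xi_n}h_m^{\ax,\bx,N;\M,F}(n-1)=\lambda^{a,b}(v+\ax+\bx)\,h_m^{\ax,\bx,N;\M,F}(n).
\]
The coefficients $a_n\xi_{n+1}/\xi_n$, $b_n$, $c_n\xi_{n-1}/\xi_n$ are rational functions of $n$ and the identity holds at infinitely many integers, so it holds identically in $x=n$. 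Since $\lambda^{a,b}(v+\ax+\bx)=\lambda^{\ax,\bx}(m-u_F)+(\ax+\bx)$, transferring the constant term $(\ax+\bx)h_m^{\ax,\bx,N;\M,F}$ to the left produces the eigenvalue $\lambda^{\ax,\bx}(m-u_F)$ on the right. As $a_n,b_n,c_n$ and the Pochhammer ratios $\xi_{n\pm1}/\xi_n$ are independent of $m$, this exhibits one operator $D$ as in (\ref{sodomex}) with $h_1(x)=a_x\xi_{x+1}/\xi_x$, $h_{-1}(x)=c_x\xi_{x-1}/\xi_x$ and $h_0(x)=b_x-(\ax+\bx)$, satisfying $D(h_m^{\ax,\bx,N;\M,F})=\lambda^{\ax,\bx}(m-u_F)h_m^{\ax,\bx,N;\M,F}$ for every $m\in\sigma_F$.

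It remains to check that these rational functions coincide with the ones displayed in (\ref{sodomex}). For this one replaces $\Phi_{n;\M,U_F}^{a,b,\Nn}$ by $\Omega_{\M,F}^{\ax,\bx,N}(n)$ through (\ref{duomph}) and $\Psi_{n;\M,U_F}^{a,b,\Nn}$ by $\Lambda_{\M,F}^{\ax,\bx,N}(n)$ through (\ref{dulaps}), up to the explicit Pochhammer prefactors occurring there. Inserting these, together with the explicit value of $\xi_{x+1}/\xi_x$ and of the leading coefficient of $R^{a,b,\Nn}_{x-a+n_F}$, the factor $a_x\xi_{x+1}/\xi_x$ collapses to $(x+\ax+n_F+1)(x-N+n_F)\,\Omega_{\M,F}^{\ax,\bx,N}(x)/\Omega_{\M,F}^{\ax,\bx,N}(x+1)$, i.e. to $h_1(x)$. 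Because $b_x$ is a first difference of $\Psi/\Phi$–quotients, and $\Psi_n/\Phi_n$ turns into $(x+\ax+n_F)(x-N-1+n_F)\Lambda_{\M,F}^{\ax,\bx,N}(x)/\Omega_{\M,F}^{\ax,\bx,N}(x)$ up to a polynomial remainder, $b_x-(\ax+\bx)$ reassembles into the $\Delta$–term of $h_0(x)$ plus the two quadratic summands.

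The main obstacle is exactly this last identification. For $h_0$ one must verify that the polynomial remainder produced by the two quadratic prefactors, after incorporating the shift $-(\ax+\bx)$, equals $-(x+n_F)(x-\bx-N-1+n_F)-(x+\ax+1+n_F)(x-N+n_F)$, and that the remaining part assembles precisely into $\Delta\big((x+\ax+n_F)(x-N-1+n_F)\Lambda_{\M,F}^{\ax,\bx,N}(x)/\Omega_{\M,F}^{\ax,\bx,N}(x)\big)$; this is a delicate but routine manipulation of the Pochhammer prefactors in (\ref{duomph}) and (\ref{dulaps}). The coefficient $c_x$ entering $h_{-1}$ in principle requires the squared norms of the $q_n^{a,b,\Nn;\M,U_F}$, which are available from \cite{dundh}; alternatively, once $h_1$ and $h_0$ are known, $h_{-1}$ is already forced by the difference equation via $h_{-1}(x)=\big([\lambda^{\ax,\bx}(m-u_F)-h_0(x)]h_m^{\ax,\bx,N;\M,F}(x)-h_1(x)h_m^{\ax,\bx,N;\M,F}(x+1)\big)/h_m^{\ax,\bx,N;\M,F}(x-1)$, and one checks that this rational function equals $x(x-\bx-N-1)\Omega_{\M,F}^{\ax,\bx,N}(x+1)/\Omega_{\M,F}^{\ax,\bx,N}(x)$.
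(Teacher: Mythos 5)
Your proposal is correct and takes essentially the same route as the paper: the paper's proof is a brief reference that combines the three-term recurrence relation for the polynomials $q_n^{a,b,\Nn;\M,U_F}$ from \cite[Corollary 5.2]{dundh} with the dualities in Lemma \ref{lem3.2}, (\ref{duomph}) and (\ref{dulaps}), which is precisely the dualization argument you spell out. Your identifications are also consistent with that scheme, including the eigenvalue shift $\lambda^{a,b}(v+\ax+\bx)=\lambda^{\ax,\bx}(v)+\ax+\bx$ absorbed into $h_0$ and the collapse of $a_x\xi_{x+1}/\xi_x$ to $(x+\ax+n_F+1)(x-N+n_F)\,\Omega_{\M,F}^{\ax,\bx,N}(x)/\Omega_{\M,F}^{\ax,\bx,N}(x+1)$.
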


\begin{proof}
The proof is similar to that of Theorem 3.3 in \cite{duch} but using here the three term recurrence relation for the polynomials
$(q_n^{a,b,\hat N;\M,U_F})_n$ in \cite[Corollary 5.2]{dundh}
and the dualities in Lemma \ref{lem3.2}, (\ref{duomph}) and (\ref{dulaps}).
\end{proof}

\subsection{Orthogonality of the polynomials $h_n^{\ax,\bx,N;\M,F}$, $n\in\sigma_F$}
In this Section we assume that $N$ is a positive integer, and define
\begin{equation}\label{sigN}
\sigma_{N;F}=\{n\in \sigma_F: n\le N+u_F\},
\end{equation}
where the set of nonnegative integers $\sigma_F$ and the nonnegative integer $u_F$ are defined in (\ref{elsig}).

As we point out in the Introduction, the key concept for the existence of a positive measure with respect to which the polynomials $(h_n^{\ax,\bx,N;\M,F})_n$ are orthogonal is that of admissibility (see Definition \ref{laad}).
This admissibility  arise from the positivity of the measure $\nu_{a,b,\Nn}^{\M, F}$ (see (\ref{mn}) and (\ref{lctnu})), but it can also be characterized by the sign of the polynomial $\Omega _{\M,F} ^{\ax,\bx, N}(x)$ when $x\in \{0,\cdots, N-n_F+1\}$.

\begin{lemma}\label{l3.1} Given two negative integers $\ax,\bx$ and a positive integer $N$,
satisfying $-N\le\ax\le \bx\le -1$, and a finite set $F$ such that (\ref{cis}) holds, the following conditions are equivalent (we use again the notation (\ref{mn}), i.e. $a=-\ax,b=-\bx$ and $\Nn=N+\ax+\bx$).
\begin{enumerate}
\item The measure $\nu_{a,b,\Nn}^{\M, F}$ is positive.
\item $\ax, \bx$, $\M$ and $F$ are  admissible.
\item $\Omega_{\M,F} ^{\ax,\bx, N}(n)\Omega_{\M,F} ^{\ax,\bx, N}(n+1)$ is positive for $n=0,\cdots, N-n_F$, where the polynomial $\Omega_{\M,F}^{\ax,\bx , N}$ is defined by (\ref{defom}).
\end{enumerate}
\end{lemma}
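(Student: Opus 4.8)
The plan is to establish the chain of equivalences by proving $(1)\Leftrightarrow(2)$ through a direct analysis of the signs of the masses of $\nu_{a,b,\Nn}^{\M,U_F}$, and $(1)\Leftrightarrow(3)$ through the duality of Lemma \ref{lem3.2} together with a formula for the squared norm of the orthogonal polynomials $q_n^{a,b,\Nn;\M,U_F}$. The engine for the second equivalence is Lemma \ref{ldmp}: since the polynomials $q_n^{a,b,\Nn;\M,U_F}$, $n=0,\dots,N-n_F$, form a complete orthogonal system for $\nu_{a,b,\Nn}^{\M,U_F}$ (this is what is proved in \cite{dundh}, and the support has exactly $N-n_F+1$ points once the factors $\prod_{u\in U_F}(x-\lambda^{a,b}(u))$ have annihilated $n_{U_F}=n_F+\ax$ masses), the measure is positive if and only if all the squared norms are positive.

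For $(1)\Leftrightarrow(2)$ I would write out the masses of $\nu_{a,b,\Nn}^{\M,U_F}=\prod_{u\in U_F}(x-\lambda^{a,b}(u))\,\nu_{a,b,\Nn}^{\M}$ point by point, using the explicit form (\ref{lanu}). The support splits into the points $\lambda^{a,b}(x)$ with $-b\le x\le -1$, whose masses carry the factors $M_{x+b}$, and the points $\lambda^{a,b}(x)$ with $0\le x\le\Nn$, coming from the dual Hahn part. Under $-N\le\ax\le\bx\le-1$ the Pochhammer factors in (\ref{lanu}) and the factors $\prod_{u\in U_F}(\lambda^{a,b}(x)-\lambda^{a,b}(u))$ have signs that can be read off explicitly (recall that each $u\in U_{F_-}$ annihilates one of the masses of the first group, which is what makes the surviving masses depend only on $M_i$, $i\in F_\bx$). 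Requiring every surviving mass to be positive then splits into precisely the two requirements of Definition \ref{laad}: the first group yields condition $(1)$ on $\sign M_i$, and the second group yields condition $(2)$ on $\prod_{f\in F;\,f\ge -\ax-\bx}(x-f-\ax-\bx)$. This is the bookkeeping announced as ``not difficult'' in the text preceding the lemma.

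For $(1)\Leftrightarrow(3)$ the central step is the norm identity
\[
\langle q_n^{a,b,\Nn;\M,U_F},q_n^{a,b,\Nn;\M,U_F}\rangle_{\nu_{a,b,\Nn}^{\M,U_F}}=c_n\,\Omega_{\M,F}^{\ax,\bx,N}(n)\,\Omega_{\M,F}^{\ax,\bx,N}(n+1),\qquad n=0,\dots,N-n_F,
\]
with $c_n>0$ explicit. I would obtain it from the three–term recurrence for the $q_n$ of \cite[Corollary 5.2]{dundh} (the very recurrence used in the proof of Theorem \ref{th3.3}): writing $\langle q_n,q_n\rangle=\langle q_0,q_0\rangle\prod_{m=1}^{n}(\hat\gamma_m/\hat\alpha_{m-1})$ and using that the recurrence coefficients are ratios of consecutive values of $\Omega_{\M,F}^{\ax,\bx,N}$ (as is already visible in the coefficients $h_{-1},h_1$ of Theorem \ref{th3.3}), the product telescopes into $\Omega_{\M,F}^{\ax,\bx,N}(n)\,\Omega_{\M,F}^{\ax,\bx,N}(n+1)$. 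The duality (\ref{duomph}) between $\Omega_{\M,F}^{\ax,\bx,N}$ and the minors $\Phi_n^{a,b,\Nn;\M,U_F}$, together with the dual Hahn norms (\ref{normedh}) and, where determinantal simplification is needed, Sylvester's identity (Lemma \ref{lemS}), pins down $c_n$; the two binomial coefficients $\binom{\ax+n_F+n}{n}$ and $\binom{\bx+N-n}{N-n_F-n}$ that occur are strictly positive for $0\le n\le N-n_F$ because (\ref{cis}) forces $n_F\ge-\ax\ge-\bx$, so $c_n>0$. Granting this identity, the equivalence closes. If $\nu_{a,b,\Nn}^{\M,U_F}$ is positive then each $\langle q_n,q_n\rangle=\sum_v\nu_v\,q_n(\lambda^{a,b}(v))^2>0$, hence each $\Omega_{\M,F}^{\ax,\bx,N}(n)\,\Omega_{\M,F}^{\ax,\bx,N}(n+1)>0$, which is $(3)$. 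Conversely, if $(3)$ holds then all squared norms are positive, so by Lemma \ref{ldmp}$(1)$ the masses share a common sign; since $\langle q_0,q_0\rangle$ is a positive multiple of the total mass $\sum_v\nu_v$, that common sign is positive and $\nu_{a,b,\Nn}^{\M,U_F}$ is positive.

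I expect the main obstacle to be the norm identity, and specifically producing $c_n$ in closed form and verifying $c_n>0$: the telescoping of the recurrence coefficients and the reduction of the large determinants defining $\Phi_n^{a,b,\Nn;\M,U_F}$ require careful tracking of the many Pochhammer and sign factors $(-1)^{n+\bx}$, $(N-n-n_F+1)_{\cdots}$, $\kappa_\M$ and $\xi_n$ appearing in (\ref{duomph}) and in Lemma \ref{lem3.2}, and this is exactly where Sylvester's identity and the perturbed structural formulas such as (\ref{fi1a2}) enter. By contrast, the sign analysis for $(1)\Leftrightarrow(2)$ is routine once the masses are written out, and the final logical assembly through Lemma \ref{ldmp} is immediate.
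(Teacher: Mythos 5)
Your proposal is correct and follows essentially the same route as the paper: $(1)\Leftrightarrow(2)$ by direct sign bookkeeping of the masses of $\nu_{a,b,\Nn}^{\M,U_F}$, and $(1)\Leftrightarrow(3)$ by turning the norm of $q_n^{a,b,\Nn;\M,U_F}$ into a positive multiple of $\Omega_{\M,F}^{\ax,\bx,N}(n)\,\Omega_{\M,F}^{\ax,\bx,N}(n+1)$ via the dualities (\ref{duaqnrn}) and (\ref{duomph}), then closing with part 1 of Lemma \ref{ldmp}. The only divergences are minor: the paper quotes the norm formula ready-made from \cite[Theorem 5.1]{dundh} rather than re-deriving it by telescoping the recurrence of \cite[Corollary 5.2]{dundh} as you propose, and your last step (using $\langle q_0,q_0\rangle$ as a positive multiple of the total mass to upgrade ``common sign'' to ``positive'') makes explicit a point the paper leaves implicit.
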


\begin{proof}

Note that
\begin{equation}\label{pmc}
\lambda^{a,b}(u)-\lambda^{a,b}(v)=(u-v)(u+v+a+b+1).
\end{equation}
The equivalence between parts 1 and 2 is now an easy consequence of Definition \ref{laad} (admissibility), the definition of the measures (\ref{lctnu}) and (\ref{lanu}) and the assumptions on the parameters $a,b,\Nn$.

According to \cite[Theorem 5.1]{dundh}, the norm of the polynomials $q_n^{a,b,\Nn ;\M,U_F}$ with respect to the measure $\nu_{a,b,\Nn}^{\M, U_F}$ is given by
\begin{align}\label{normqu}
&\langle q_n^{a,b,\Nn ;\M,U_F},q_n^{a,b,\Nn ;\M,U_F}\rangle_{\nu_{a,b,\Nn}^{\M,U_F}}\\\nonumber
&\quad =\frac{-n!(N+b)!^2(N+a+b-n)^{a}}
{(n+n_{U_F})!(N+a-n)!(N+a+b-n)!}
\Phi_{n;\M,U_F}^{a,b,\Nn }\Phi_{n+1;\M,U_F}^{a,b,\Nn }.
\end{align}
Using the dualities (\ref{duaqnrn}) in Lemma \ref{lem3.2} and (\ref{duomph}), we get
\begin{align}\label{norkdh}
&\langle q_n^{a,b,\Nn ;\M,U_F},q_n^{a,b,\Nn ;\M,U_F}\rangle_{\nu_{a,b,\Nn}^{\M,U_F}}=\frac{n!(N+b)!^2(N+a+b-n)^{a}}
{(n+n_{U_F})!(N+a-n)!(N+a+b-n)!}\\\nonumber
&\hspace{3cm}\times\frac{\xi_n\xi_{n+1}\Omega_{\M,F}^{\ax,\bx,N}(n)\Omega_{\M,F}^{\ax,\bx,N}(n+1)}{\kappa_\M^{2}(N-n-n_F)(N-n-n_F+1)_{n+\ax+\bx+n_F}^2}.
\end{align}
From where we deduce that
$$
\sign\left(\langle q_n^{a,b,\Nn ;\M,U_F},q_n^{a,b,\Nn ;\M,U_F}\rangle_{\nu_{a,b,\Nn}^{\M,U_F}}\right)=
\sign\left(\Omega_{\M,F}^{a,b,N}(n)\Omega_{\M,F}^{a,b,N}(n+1)\right).
$$
Part 1 $\Rightarrow$ part 2 is then an easy consequence of the positivity of the measure $\nu_{a,b,\Nn}^{\M, U_F}$. And part 2 $\Rightarrow$ part 1 follows from the part 1 of Lemma \ref{ldmp}.

\end{proof}

In the following Theorem we prove that when  $\ax,\bx $, $\M$ and $F$ are Hahn admissible the polynomials $h_n^{\ax,\bx,N;\M,F}$, $n\in \sigma _{N;F}$, are orthogonal and complete with respect to a positive measure.

\begin{theorem}\label{th4.5} Let $\ax,\bx$ and  $N$ be two negative integers and a positive integer
satisfying $-N\le\ax\le \bx\le -1$, and let $F$ be a finite set of positive integers such that (\ref{cis}) holds.
Assume that $\ax,\bx$, $\M$ and $F$ are admissible, then
the  polynomials $h_n^{\ax,\bx, N;\M, F}$, $n\in \sigma _{N;F}$, are orthogonal and complete with respect to the positive measure
\begin{equation}\label{momex}
\omega_{\ax,\bx,N}^F=\sum_{x=0}^{N-n_F} \frac{\binom{\ax +n_F+x}{x}\binom{\bx +N-x}{N-n_F-x}}{\Omega_{\M,F}^{\ax,\bx,N}(x)\Omega_{\M,F}^{\ax,\bx,N}(x+1)}\delta_x.
\end{equation}
Hence $h_n^{\ax,\bx, N;\M,F}$, $n\in \sigma _{N;F}$, are exceptional Hahn polynomials. Moreover, if we set $a=-\ax, b=-\bx, \Nn=N+\ax+\bx$, we have for $n\in\NN\setminus F$ and $n\le N$
\begin{align}\label{nomh}
\Vert h_{n+u_F}^{\ax,\bx, N;\M,F}\Vert_2=\frac{\tau_n\zeta_{n+\ax+\bx}^2(\theta_n^\M)^2\prod_{j=1}^{-\bx}(N+\ax+\bx+j)^2}
{n_{U_F}!(-\ax+\bx+n_{U_F})!\nu_{a,b,\Nn}^{\M}(n+\ax+\bx)},
\end{align}
where $\zeta_v$, $\tau_v$ and $\theta_v^\M$ are defined in Lemma \ref{lem3.2}, and  we denote by $\nu_{a,b,\Nn}^{\M}(s)$ the mass of the discrete positive measure $\nu_{a,b,\Nn}^{\M}$ (\ref{lanu}) at the point $\lambda^{a,b}(s)$ (see also (\ref{mn})).
\end{theorem}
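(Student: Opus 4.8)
The plan is to transfer the orthogonality of the Krall dual Hahn polynomials $q_n^{a,b,\Nn;\M,U_F}$ (proved in \cite{dundh}) to the polynomials $h_n^{\ax,\bx,N;\M,F}$ by means of the duality in Lemma \ref{lem3.2}, exploiting the elementary fact that a square table of values whose rows are orthonormal also has orthonormal columns. Setting $a=-\ax$, $b=-\bx$, $\Nn=N+\ax+\bx$, I would first record the dimension count that makes this work. Since the block $\{-\bx,\dots,-\ax-\bx-1\}\subset F$ supplies $a$ elements of $F$ that do \emph{not} enter $U_F$, one has $n_{U_F}=n_F-a$, and a direct count shows that $\nu_{a,b,\Nn}^{\M,U_F}$ is supported on exactly $N-n_F+1$ points $\lambda^{a,b}(s)$, while $\sigma_{N;F}$ likewise has $N-n_F+1$ elements. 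The first piece of bookkeeping is to check that $v\mapsto \lambda^{a,b}(v+\ax+\bx)$, as $v$ runs through $\{0,\dots,N\}\setminus F$, is a bijection onto $\supp(\nu_{a,b,\Nn}^{\M,U_F})$; this follows from the folding identity $\lambda^{a,b}(s)=\lambda^{a,b}(-s-a-b-1)$ together with the definitions \eqref{elu1}--\eqref{elu2} of $U_{F_-}$ and $U_{F_+}$.

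Under admissibility, Lemma \ref{l3.1} gives that $\nu_{a,b,\Nn}^{\M,U_F}$ is positive, so $q_0,\dots,q_{N-n_F}$ form an orthogonal basis of the $(N-n_F+1)$-dimensional space $L^2(\nu_{a,b,\Nn}^{\M,U_F})$ with nonzero norms. Writing $Q_{n,s}=q_n^{a,b,\Nn;\M,U_F}(\lambda^{a,b}(s))$ and $\nu_s$ for the mass of $\nu_{a,b,\Nn}^{\M,U_F}$ at $\lambda^{a,b}(s)$, the matrix with entries $Q_{n,s}\sqrt{\nu_s}/\Vert q_n\Vert$ is square and orthogonal, so transposing yields the dual relations $\sum_n Q_{n,s}Q_{n,t}/\Vert q_n\Vert^2=\delta_{s,t}/\nu_s$. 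Substituting \eqref{duaqnrn} in the form $Q_{n,v+\ax+\bx}=\xi_n\,h_{v+u_F}^{\ax,\bx,N;\M,F}(n)/(\kappa_\M\tau_v\zeta_{v+\ax+\bx}\theta_v^\M)$ turns this into $\sum_{n=0}^{N-n_F} h_{v+u_F}^{\ax,\bx,N;\M,F}(n)\,h_{w+u_F}^{\ax,\bx,N;\M,F}(n)\,\xi_n^2/\Vert q_n\Vert^2=\delta_{v,w}\,(\kappa_\M\tau_v\zeta_{v+\ax+\bx}\theta_v^\M)^2/\nu_{v+\ax+\bx}$. Thus the polynomials are orthogonal with respect to $\tilde\omega=\sum_{n=0}^{N-n_F}(\xi_n^2/\Vert q_n\Vert^2)\delta_n$, with squared norm $(\kappa_\M\tau_v\zeta_{v+\ax+\bx}\theta_v^\M)^2/\nu_{v+\ax+\bx}$.

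It then remains to identify $\tilde\omega$ with a positive multiple of $\omega_{\ax,\bx,N}^{\M,F}$ and to simplify the norm. I would substitute the explicit value \eqref{norkdh} into $\xi_n^2/\Vert q_n\Vert^2$: the factor $\Omega_{\M,F}^{\ax,\bx,N}(n)\Omega_{\M,F}^{\ax,\bx,N}(n+1)$ cancels against the denominator of \eqref{momex}, and the surviving factorial and Pochhammer factors must collapse to a constant $\lambda$ independent of $n$, matching $\binom{\ax+n_F+n}{n}\binom{\bx+N-n}{N-n_F-n}$; verifying this factorial identity, and thereby pinning down $\lambda$, is the main computational obstacle. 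Granting it, $\tilde\omega=\lambda^{-1}\omega_{\ax,\bx,N}^{\M,F}$, and the norm \eqref{nomh} comes out after using $\nu_{a,b,\Nn}^{\M,U_F}(\lambda^{a,b}(s))=\tau_v\,\nu_{a,b,\Nn}^{\M}(s)$ (from \eqref{lctnu}, with $s=v+\ax+\bx$ and $\tau_v=\prod_{u\in U_F}(\lambda^{a,b}(v+\ax+\bx)-\lambda^{a,b}(u))$): one power of $\tau_v$ cancels, and the constant $\lambda$ together with $\kappa_\M^2$ supplies the prefactor $\prod_{j=1}^{-\bx}(N+\ax+\bx+j)^2/[\,n_{U_F}!\,(-\ax+\bx+n_{U_F})!\,]$.

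Positivity of $\omega_{\ax,\bx,N}^{\M,F}$ is immediate from this description: the masses $\xi_n^2/\Vert q_n\Vert^2$ of $\tilde\omega$ have the sign of $\Omega_{\M,F}^{\ax,\bx,N}(n)\Omega_{\M,F}^{\ax,\bx,N}(n+1)$, which is positive for $n=0,\dots,N-n_F$ by Lemma \ref{l3.1}, and $\lambda>0$ follows from the computation above. Finally, completeness is obtained from part 2 of Lemma \ref{ldmp}: the $N-n_F+1$ polynomials $h_{v+u_F}^{\ax,\bx,N;\M,F}$, $v\in\{0,\dots,N\}\setminus F$, are orthogonal with nonzero $L^2(\omega_{\ax,\bx,N}^{\M,F})$-norm in the $(N-n_F+1)$-dimensional space $L^2(\omega_{\ax,\bx,N}^{\M,F})$, hence form a basis. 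Combined with Theorem \ref{th3.3}, which makes them eigenfunctions of a second order difference operator, this establishes that the $h_n^{\ax,\bx,N;\M,F}$, $n\in\sigma_{N;F}$, are exceptional Hahn polynomials.
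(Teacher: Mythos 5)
Your proposal is correct and follows essentially the same route as the paper: both transfer the orthogonality of the $q_n^{a,b,\Nn;\M,U_F}$ to the $h_n^{\ax,\bx,N;\M,F}$ through the duality of Lemma \ref{lem3.2}, identify the resulting weights with $\omega_{\ax,\bx,N}^{\M,F}$ via (\ref{norkdh}) (the paper likewise leaves this as ``careful computations''), and conclude completeness from part 2 of Lemma \ref{ldmp} by matching the $N-n_F+1$ polynomials against the $N-n_F+1$ support points. The only cosmetic difference is that you obtain the dual orthogonality by transposing the square orthogonal matrix of normalized values, whereas the paper derives the same relation (\ref{pf1}) by expanding the point-mass functions $\phi_v$ in the orthonormal basis $(q_n/\Vert q_n\Vert_2)_n$; these are two standard phrasings of the same fact.
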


\begin{proof}
First of all, the part 3 of Lemma \ref{l3.1} shows that the measure $\omega_{\ax,\bx, N}^{\M,F}$ is  positive.

The measure $\nu_{a,b,\Nn}^{\M,U_F}$ (\ref{lctnu}) is also positive  (part 1 of Lemma \ref{l3.1}), and it is not difficult to see that it is supported in the finite set
\begin{equation}\label{sopn}
\supp_{\nu_{a,b,\Nn}^{\M,U_F}}=\{\lambda^{a,b}(v+\ax+\bx):v\in \NN\setminus F, v\le N\},
\end{equation}
formed by $N-n_F+1$ point.

Hence, the polynomials $q_n^{a,b,\hat N;\M,U_F}$ (see (\ref{qusmei2u})), $n=0,\cdots, N-n_F$, have degree $n$ and positive $L^2$-norm. Using part 2 of Lemma \ref{ldmp}, we deduce  that the finite sequence $q_n/\Vert q_n \Vert_2$, $n=0,\cdots, N-n_F$, is an orthonormal basis in $L^2(\nu_{a,b,\Nn}^{\M,U_F})$ (to simplify the notation we remove some of the parameters, and write $q_n$ instead of $q_n^{a,b,\hat N;\M,U_F}$).

The nonnegative integers in $\sigma _{N;F}$ has the form $v+u_F$, $v\in\NN\setminus F$ and $v\le N$. For such $v$, write $s=v+\ax+\bx$. (\ref{sopn}) says that $\lambda^{a,b}(s)$ is in the support of $\nu_{a,b,\Nn}^{\M,U_F}$. Consider then the function
$$
\phi_v(x)=\begin{cases} 1/\nu_{a,b,\Nn}^{\M,U_F}(s),& x=\lambda^{a,b}(s),\\ 0,& x\not =\lambda^{a,b}(s), \end{cases},
$$
where as before we denote by $\nu_{a,b,\Nn}^{\M,U_F}(s)$ the mass of the discrete positive measure $\nu_{a,b,\Nn}^{\M,U_F}$  at the point $\lambda^{a,b}(s)$.

The function $\phi_v\in L^2(\nu_{a,b,\Nn}^{\M,U_F})$ and its Fourier coefficients with respect to the orthonormal basis $(q_n/\Vert q_n\Vert _2)_n$ are $q_n(\lambda^{a,b}(s))/\Vert q_n\Vert _2$, $n=0,\cdots, N-n_F$. Hence if we take other nonnegative integer in $\sigma_{N;F}$, that is, a number of the form $\tilde v+u_F$, $\tilde v\in\NN\setminus F$ and $\tilde v\le N$, we have that $\tilde s=\tilde v+\ax+\bx$ is in the support of $\nu_{a,b,\Nn}^{\M,U_F}$ and
\begin{equation}\label{pf1}
\sum _{n=0}^{N-n_F} \frac{q_n(\lambda^{a,b}(s))q_n(\lambda^{a,b}(\tilde s))}{\Vert q_n\Vert _2 ^2}=\langle \phi_s,\phi_{\tilde s}\rangle _{\nu_{a,b,\Nn}^{\M, U_F}}=\frac{1}{\nu_{a,b,\Nn}^{\M,U_F}(s)}\delta_{s,\tilde s}.
\end{equation}
This is the dual orthogonality associated to the orthogonality
$$
\sum_{s\in \supp_{\nu_{a,b,\Nn}^{\M,U_F}}}q_n (\lambda^{a,b}(s))q_m (\lambda^{a,b}(s))\nu_{a,b,\Nn}^{\M, U_F}(s)=\langle q_n ,q_n \rangle \delta_{n,m}
$$
of the polynomials $q_n $, $n=0,\cdots, N-n_F$, with respect to the positive measure $\nu_{a,b,\Nn}^{\M,U_F}$ (see, for instance, \cite{At}, Appendix III, or \cite{KLS}, Th. 3.8).

Using the duality (\ref{duaqnrn}) in Lemma \ref{lem3.2} and  (\ref{norkdh}), we get from (\ref{pf1}) (after careful computations)
\begin{align*}
\sum _{n=0}^{N-n_F}h_{v+u_F}^{\ax,\bx, N;\M,F}&(n)h_{\tilde v+u_F}^{\ax,\bx, N;\M,F}(n) \omega_{\alpha,\beta, N}^{F}(n)\\\nonumber
&\quad =\frac{\tau_v\zeta_{v+\ax+\bx}^2(\theta_v^\M)^2\prod_{j=1}^{-\bx}(N+\ax+\bx+j)^2}
{n_{U_F}!(-\ax+\bx+n_{U_F})!\nu_{a,b,\Nn}^{\M}(v+\ax+\bx)}\delta_{v,\tilde v}.
\end{align*}
This shows that the  polynomials $h_v^{\ax,\bx, N;\M,F}$, $v\in \sigma_{N;F}$, are orthogonal and have non-null  $L^2$ norm (actually, the admissibility conditions in Definition \ref{laad} show that the norm is positive). Since the positive measure $\omega_{\ax,\bx, N}^{\M,F}$  has $N-n_F+1$ points in its support and we have $N-n_F+1$ polynomials $h_v^{\ax,\bx, N;\M,F}$ of degree $v$, we can conclude using part 2 of Lemma \ref{ldmp} that they form an orthogonal basis in $L^2(\omega_{\ax,\bx, N}^{\M,F})$.

\end{proof}

\section{New exceptional Jacobi families depending on an arbitrary number of continuous parameters}\label{sec5}
As in the rest of this paper, $\M$ denotes the set of parameters $\M=\{M_0,M_1,\cdots \}$, $F$ a finite set of positive integers, and $\ax$ and $\bx$ denote negative integers.

\begin{definition}\label{jll}
We associate to $\ax,\bx$, $\M$ and $F$ the sequence of polynomials
\begin{equation}\label{losjx2}
P_n^{\ax, \bx;\M, F}(x)=\left|
  \begin{array}{@{}c@{}lccc@{}c@{}}
    & (\Pp_{n-u_F}^{\ax ,\bx;\M})^{(j-1)}(x) &&\hspace{-.4cm}{}_{1\le j\le n_F+1} \\
    \dosfilas{(\Pp_{f}^{\ax ,\bx ;\M})^{(j-1)}(x) }{f\in F} \\
  \end{array}
  \hspace{-.4cm}\right|
\end{equation}
where $n\in \sigma _F$ (\ref{elsig}) and $(\Pp_n^{\ax,\bx;\M})_n$ are the polynomials introduced in Definitions \ref{losj} (when $\ax\le \bx$) and \ref{losjt} (when $\bx\le \ax$).
\end{definition}

We next prove that the polynomial $P_n^{\ax, \bx;\M, F}(x)$ can be obtained (up to normalization constants) from the polynomial $h_n^{\ax,\bx,N;\M,F}(x)$ (\ref{defmexi2}) setting $x\to (1-x)N/2$ and taking limit as $N\to +\infty$.

\begin{lemma}\label{tol}
For $n\in \sigma_F$,
\begin{equation}\label{lim1t}
\lim_{N\to +\infty}\frac{h_n^{\ax,\bx,N;\M,F}\left((1-x)N/2\right)}{N^n}=\upsilon_n^{F}P_n^{\ax,\bx ;\M,F}(x)
\end{equation}
uniformly in compact sets, where
\begin{equation}\label{defupn}
\upsilon_n^{F}=(-1)^{n}2^{\binom{n_F+1}{2}}(n-u_F)!\prod_{f\in F}f!.
\end{equation}
\end{lemma}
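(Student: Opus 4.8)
The plan is to substitute $x\to (1-x)N/2$ directly in the determinant (\ref{defmexi2}), let $N\to+\infty$, and reduce everything to the scalar limit (\ref{blmel2}) combined with a determinantal ``confluent'' argument that converts the discrete shifts $x+j-1$ into derivatives. First I would introduce, for each degree $m$ occurring as a row index (namely $m=n-u_F$ and $m=f$, $f\in F$), the entire function
$$
G_m^N(x)=\frac{\Hh_m^{\ax,\bx,N;\M}\big((1-x)N/2\big)}{N^m},
$$
so that (\ref{blmel2}) reads $G_m^N(x)\to(-1)^m m!\,\Pp_m^{\ax,\bx;\M}(x)$ uniformly on compact sets of $\CC$. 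Since these are polynomials converging uniformly on compacts, Weierstrass's theorem on analytic limits gives that every derivative $\partial^r G_m^N$ also converges uniformly on compacts to $(-1)^m m!\,(\Pp_m^{\ax,\bx;\M})^{(r)}$, and is uniformly bounded in $N$ on each compact set; this is the fact that lets me interchange the limit with differentiation.

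Next I would rewrite the entries. After the substitution, the $(i,j)$ entry of (\ref{defmexi2}) is $\Hh_{m_i}^{\ax,\bx,N;\M}\big((1-x)N/2+(j-1)\big)$; writing the shifted argument as $(1-x_j)N/2$ with $x_j=x-2(j-1)/N$ turns this entry into $N^{m_i}G_{m_i}^N(x_j)$. Pulling the factor $N^{m_i}$ out of the $i$-th row, where the total power extracted is $N^{(n-u_F)+\sum_{f\in F}f}=N^{\,n+\binom{n_F+1}{2}}$ by $\sum_{f\in F}f=u_F+\binom{n_F+1}{2}$ (see (\ref{elsig})), yields
$$
\frac{h_n^{\ax,\bx,N;\M,F}\big((1-x)N/2\big)}{N^{\,n+\binom{n_F+1}{2}}}=\det\big(G_{m_i}^N(x+(j-1)h)\big)_{i,j=1}^{n_F+1},\qquad h=-\tfrac{2}{N}.
$$
Now I apply the confluent trick: the unitriangular column operation replacing column $j$ by the $(j-1)$-th forward difference $\Delta_h^{j-1}$ (coefficients $\binom{j-1}{l}(-1)^{j-1-l}$, all independent of $N$) leaves the determinant unchanged and makes the $(i,j)$ entry equal to $\Delta_h^{j-1}G_{m_i}^N(x)=h^{j-1}\big(\partial^{j-1}G_{m_i}^N(x)+O(h)\big)$, with the $O(h)$ uniform in $N$ by the boundedness of derivatives noted above. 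Factoring $h^{j-1}$ from column $j$ (total $h^{\binom{n_F+1}{2}}$) gives
$$
\det\big(G_{m_i}^N(x+(j-1)h)\big)=h^{\binom{n_F+1}{2}}\Big(\det\big(\partial^{j-1}G_{m_i}^N(x)\big)+O(h)\Big).
$$

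Finally I would let $N\to+\infty$. Using $h^{\binom{n_F+1}{2}}=(-2)^{\binom{n_F+1}{2}}N^{-\binom{n_F+1}{2}}$ and the row factor above gives
$$
\frac{h_n^{\ax,\bx,N;\M,F}\big((1-x)N/2\big)}{N^n}=(-2)^{\binom{n_F+1}{2}}\Big(\det\big(\partial^{j-1}G_{m_i}^N(x)\big)+O(1/N)\Big).
$$
By uniform convergence of the derivatives, $\det\big(\partial^{j-1}G_{m_i}^N(x)\big)$ tends uniformly on compacts to $\big(\prod_i(-1)^{m_i}m_i!\big)\det\big((\Pp_{m_i}^{\ax,\bx;\M})^{(j-1)}(x)\big)=\big(\prod_i(-1)^{m_i}m_i!\big)P_n^{\ax,\bx;\M,F}(x)$ by (\ref{losjx2}). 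Since $\prod_i(-1)^{m_i}=(-1)^{\,n+\binom{n_F+1}{2}}$ and $\prod_i m_i!=(n-u_F)!\prod_{f\in F}f!$, the overall constant is $(-2)^{\binom{n_F+1}{2}}(-1)^{n+\binom{n_F+1}{2}}(n-u_F)!\prod_{f\in F}f!=(-1)^n2^{\binom{n_F+1}{2}}(n-u_F)!\prod_{f\in F}f!=\upsilon_n^F$, which is (\ref{defupn}). The main obstacle is precisely that the functions $G_{m_i}^N$ depend on $N$, so the confluent limit cannot be applied to fixed functions; the argument hinges on upgrading the scalar convergence (\ref{blmel2}) to uniform convergence of all derivatives together with uniform-in-$N$ remainder bounds, which is exactly what analyticity and uniform-on-compacts convergence provide.
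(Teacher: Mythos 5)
Your proof is correct, and it takes a genuinely different route from the paper's. The paper proves a more general determinantal limit (for an arbitrary finite set $K$ in place of $\{n-u_F\}\cup F$) by induction on $n_K$: the base case is the scalar limit (\ref{blmel2}); a preliminary step shows, by comparing polynomial coefficients, that if $p_N((1-x)N/2)/N^g\to q(x)$ and $r_N(x)=p_N(x+1)-p_N(x)$, then $r_N((1-x)N/2)/N^{g-1}\to -2q'(x)$; and the inductive step applies Sylvester's determinant identity (Lemma \ref{lemS}) to $h_K$ to reduce to the sets $K_0$, $K_1$, $K_{0,1}$, then reassembles the answer with a second application of Sylvester's identity to $P_K$, using the degree bookkeeping $g_K=g_{K_0}+g_{K_1}-g_{K_{0,1}}-1$. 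You avoid induction entirely: you pull the row factors $N^{m_i}$ out (correctly totalling $N^{n+\binom{n_F+1}{2}}$), perform unitriangular column operations turning the shifts into finite differences $\Delta_h^{j-1}$ with $h=-2/N$, and then pass to the limit entrywise, invoking Weierstrass's theorem and Cauchy estimates to upgrade (\ref{blmel2}) to uniform convergence of all derivatives together with uniform-in-$N$ control of the Taylor remainders; you correctly identify that uniformity in $N$ of the $O(h)$ term is the crux, and analyticity does supply it. Your argument is more direct, treats all rows and columns at once, and makes the constant $\upsilon_n^{F}$ transparent (the sign and power of $2$ come from $h=-2/N$, the factorials and $(-1)^n$ from the scalar limits), at the price of genuinely analytic input; the paper's argument stays essentially algebraic -- coefficient comparison plus Sylvester's identity, a tool it has already set up and uses elsewhere -- at the price of an induction and the auxiliary degree relation.
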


\begin{proof}
The Lemma is an easy consequence of the following result.

\noindent
For a finite set $K$ of positive integers define
$$
P_K(x)=\left|
  \begin{array}{@{}c@{}lccc@{}c@{}}
    &  &&\hspace{-1.4cm}{}_{1\le j\le n_K} \\
    \dosfilas{(\Pp_{k}^{\ax ,\bx ;\M})^{(j-1)}(x) }{k\in K} \\
  \end{array}
  \hspace{-.4cm}\right|,\quad  h_K(x)= \left|\begin{array}{@{}c@{}lccc@{}c@{}}
    & &&\hspace{-1.4cm}{}_{1\le j\le n_K} \\
    \dosfilas{\Hh_{k}^{\ax ,\bx ,N;\M}(x+j-1) }{k\in K} \\
  \end{array}  \hspace{-.4cm}\right|,
$$
and set $g_K=u_K+n_K$. Then
\begin{equation}\label{ldm}
\lim_{N\to \infty}\frac{h_K((1-x)N/2)}{N^{g_K}}=\upsilon_K P_K(x),\quad x\in \CC,
\end{equation}
where
\begin{equation}\label{upn0}
\upsilon_K=(-1)^{g_K}2^{\binom{n_{K}}{2}}\prod_{k\in K}k!.
\end{equation}
We prove (\ref{ldm}) in two steps.
\medskip

\noindent
\textit{Step 1.} Let $p_N$, $N\in \NN$, and $q$ be two polynomials of degree $g$ satisfying that
\begin{equation}\label{pldm}
\lim_{N\to \infty}\frac{p_N((1-x)N/2)}{N^g}=q(x),
\end{equation}
uniformly in compact set of $\CC$.
Write $r_N(x)=p_N(x+1)-p_N(x)$. Then
\begin{equation}\label{sldm}
\lim_{N\to \infty}\frac{r_N((1-x)N/2)}{N^{g-1}}=-2q'(x),
\end{equation}
uniformly in compact set of $\CC$.

Indeed, if we write $p_N(x)=\sum_{j=0}^ga_j^Nx^j$ and $q(x)=\sum_{j=0}^gb_j(1-x)^j$, we easily get from (\ref{pldm}) that
\begin{equation}\label{tldm}
\lim_{N\to \infty}\frac{a_j^N}{2^jN^{g-j}}=b_j,\quad j=0,\cdots, g.
\end{equation}
We now write
\begin{align*}
r_N((1-x)N/2)&=\sum_{j=0}^ga_j^N\left(\frac{(1-x)N}{2}+1\right)^j-\sum_{j=0}^ga_j^N\left(\frac{(1-x)N}{2}\right)^j\\
&=\sum_{j=0}^ga_j^N\sum_{l=0}^{j-1}\binom{j}{l}\left(\frac{(1-x)N}{2}\right)^l\\
&=\sum_{l=0}^{g-1}\left(\frac{(1-x)N}{2}\right)^l\sum_{j=l+1}^{g-1}\binom{j}{l}a_j^N.
\end{align*}
Using (\ref{tldm}), we deduce that
$$
\lim_{N\to \infty}\frac{\sum_{j=l+1}^{g-1}\binom{j}{l}a_j^N}{2^lN^{g-l-1}}=2(l+1)b_{l+1},\quad l=0,\cdots, g-1,
$$
from where (\ref{sldm}) follows easily.

\medskip

\noindent
\textit{Step 2.} We proceed by induction on $n_K$. For $n_K=1$, (\ref{ldm}) is just (\ref{blmel2}).

Assume now that (\ref{ldm}) holds for any finite set $K$ of positive integers with less than $s$ elements.

If $n_K=s$, write
$$
K_0=K\setminus\{\min K\}, K_1=K\setminus\{\max K\}, K_{0,1}=K\setminus\{\min K,\max K\}.
$$
Notice that $g_K=u_K+n_K$ is the degree of $h_K$ (see (\ref{lcrnw})). An easy computation using (\ref{elsig}) shows that
\begin{equation}\label{lgc}
g_K=g_{K_1}+g_{K_0}-g_{K{0,1}}-1.
\end{equation}
Applying Sylvester's identity in Lemma \ref{lemS} to $h_K$ (for $i_0=1,i_1=n_K$ and $j_0=1,j_1=n_K$) and using (\ref{lgc}), we get
\begin{align*}
\frac{h_K(x)}{N^{g_K}}&=\frac{h_{K_1}(x+1)h_{K_0}(x)-h_{K_1}(x)h_{K_0}(x+1)}{N^{g_K}h_{K_{0,1}}(x+1)}\\
&=\frac{\frac{h_{K_1}(x+1)-h_{K_1}(x)}{N^{g_{K_1}-1}}\frac{h_{K_0}(x)}{N^{g_{K_0}}}-\frac{h_{K_0}(x+1)-h_{K_0}(x)}{N^{g_{K_0}-1}}\frac{h_{K_1}(x)}{N^{g_{K_1}}}}
{\frac{h_{K_{0,1}}(x+1)}{N^{g_{K_{0,1}}}}}.
\end{align*}
Setting $x\to (1-x)N/2$, taking limit as $N\to +\infty$ and using the induction hypothesis and the first step, we deduce
$$
\lim_{N\to \infty}\frac{h_K((1-x)N/2)}{N^{g_K}}=\frac{-2\upsilon_{K_0}\upsilon_{K_1}}{\upsilon_{K_{0,1}}}\frac{P_{K_1}'(x)P_{K_0}(x)-P_{K_1}(x)P_{K_0}'(x)}{P_{K_{0,1}}(x)}.
$$
Applying again Sylvester's identity in Lemma \ref{lemS} to $P_K$ (for $i_0=1,i_1=n_K$ and $j_0=n_K-1,j_1=n_K$) and using (\ref{upn0}),
we finally deduce that the expression in the right hand side of the previously identity is
$$
\upsilon_KP_K(x).
$$

\end{proof}

As a consequence of the Lemma, we deduce that $P_n^{\ax,\bx ;\M,F}$ is a polynomial of degree $n$ with leading coefficient equal to
\begin{equation*}\label{lcrnp}
V_{F}\prod_{i\in \{n-u_F\},F_1}s_i^{\ax,\bx;\M}\prod_{f\in F}(f-n+u_F),
\end{equation*}
where $V_F$ is the Vandermonde determinant (\ref{defvdm}) and $s_i^{\ax,\bx;\M}$  is the leading coefficient of the  polynomial $\Pp_i^{\ax,\bx,N;\M}$ (see (\ref{lcja})).

As for the exceptional Hahn polynomials, we only have to consider the case $\ax\le \bx$ because it follows easily from (\ref{fi2ja}) that
$$
P_n^{\ax, \bx;\M, F}(x)=(-1)^nP_n^{\bx, \ax;\M^{-1}, F}(-x).
$$
Hence, from now on, we also assume $\ax\le\bx$.

There are some reasons (which we explain in Section \ref{secu}) to assume also that
\begin{equation}\label{cis3}
\{-\bx,\cdots, -\ax-\bx-1\}\subset F.
\end{equation}

According to Definition \ref{losj}, the polynomials $P_n^{\ax,\bx;\M, F}$, $n\in \sigma _F$, seem to depend on the parameters $M_{i}$,  $i=0,1,\cdots,-\bx-1$. However, as for the exceptional Hahn family, the polynomials only depend on the parameters
$M_i$, $i\in F_{\bx}$ (\ref{conp}). Indeed, if $f\in F$, $-\ax\le f\le -\ax-\bx-1$ and $-\ax-\bx-f-1\in F$ then we can use the polynomial $\Pp_{-\ax-\bx-f-1}^{\ax,\bx;\M}$ in the determinant (\ref{losjx2}) to remove the second summand in the right hand side of the identity (\ref{losj2}) which defines the polynomial $\Pp_{f}^{\ax,\bx;\M}$. In doing that we remove the dependence of the polynomial $P_{n}^{\ax,\bx;\M,F}$ on the parameter $M_{\ax+f}$.
As for the exceptional Hahn polynomials, we can be more precise: enumerate the polynomials $P_n^{\ax,\bx;\M, F}$, $n\in \sigma _F$, in accordance to the position of $n$ in the set $\sigma _F$ (i.e., the first polynomial would be $P_{u_F}^{\ax,\bx;\M, F}$)
and similarly enumerate the parameters $M_i$, $i\in F_{\bx}$, in accordance to the position of $i$ in the set $F_b$ (\ref{conp2}). it is then not difficult to check that for $i=1,\cdots, n_{F_b}$, the $i$-th polynomial $P_{n}^{\ax,\bx;\M,F}$ does not depend on the $(n_{F_b}-i)$-th parameter, and for $i\ge n_{F_b}+1$, the $i$-th  polynomial $P_{n}^{\ax,\bx;\M,F}$  depends on all the parameters $M_i$, $i\in F_b$.

We introduce the associated polynomial
\begin{equation}\label{defhom}
\Omega _{\M,F}^{\ax,\bx}(x)=\left|
  \begin{array}{@{}c@{}lccc@{}c@{}}
    &  &&\hspace{-.9cm}{}_{1\le j\le n_F} \\
    \dosfilas{ (\Pp_{f}^{\ax ,\bx })^{(j-1)}(x) }{f\in F}
  \end{array}
  \hspace{-.4cm}\right|.
\end{equation}
As a consequence of Lemma \ref{tol}, we have
\begin{align}\label{lim2}
\lim _{N\to +\infty}\frac{\Omega _{\M,F}^{\ax,\bx,N}((1-x)N/2)}{N^{u_F+n_F}}&=\upsilon_{F}\Omega _{\M,F}^{\ax,\bx} (x),
\end{align}
uniformly in compact set of $\CC$, where $\upsilon_F$ is defined by (\ref{upn0}).

Notice that $\Omega_{\M,F}^{\ax,\bx}$ is a polynomials of degree $u_F+n_F$ and leading coefficient
\begin{equation*}\label{lcornp}
V_{F}\prod_{i\in F_1}s_i^{\ax,\bx;\M},
\end{equation*}
where $V_F$ is the Vandermonde determinant (\ref{defvdm}) and $s_i^{\ax,\bx;\M}$  is the leading coefficient of the  polynomial $\Pp_i^{\ax,\bx,N;\M}$ (see (\ref{lcja})).

The following property will be useful to show that the exceptional Legendre polynomials introduced in \cite{xle} are particular cases of the exceptional Jacobi polynomials introduced here.

\begin{remark}\label{jlp1} We first renormalize the polynomials $P_{n}^{\ax,\bx;\M,F}$ as follows:
$$
\bar P_{n}^{\ax,\bx;\M,F}(x)=(\prod_{i=0}^{-\bx -1}(M_i-1))P_{n}^{\ax,\bx;\M,F}(x),\quad n\in \sigma_F.
$$
As in Remark \ref{hlp1}, for a finite set $J$ of nonnegative integers, we denote by $\M_J$ the particular case of the set of parameters $\M$ obtained by setting $M_j=1$, $j\in J$.

If $f\in\{-\ax,\cdots,-\ax-\bx-1\}\cap F$ and $-f-\ax-\bx-1\not \in F$, write $\tilde F=(F\setminus\{f\})\cup {-f-\ax-\bx-1}$. Then
for $n\in \sigma_F$, $n\not =u_F-f-\ax-\bx-1$
$$
\bar P_{n-(2f+\ax+\bx+1)}^{\ax, \bx;\M, \tilde F}(x)=\frac{(-1)^{n_f}(M(\ax+f)-1)}{d_f}\bar P_n^{\ax, \bx;\M_{\{\ax+f\}}, F}(x),
$$
where
$$
d_f=\frac{(f+\ax)!(f+\bx)!(-\ax-\bx-f-1)!}{(-1)^{\bx+f}f!},
$$
and $n_f$ denotes the number of elements in $F$ which are bigger than  $-f-\ax-\bx-1$ and less than $f$;
similarly
$$
P_{u_F-f-\ax-\bx-1}^{\ax, \bx;\M, \tilde F}(x)=(-1)^{n_f-1}P_{u_F-f-\ax-\bx-1}^{\ax, \bx;\M_{\{\ax+f\}}, F}(x).
$$
The proof is analogous to that of Remark \ref{hlp1}.
\end{remark}

\subsection{The second order differential operator}
Passing to the limit, we can transform the second order difference operator (\ref{sodomex}) for the polynomials $h_n^{\ax,\bx,N;\M,F}$, $n\in\sigma_F$, in a second order differential operator with respect to which the polynomials $P_n^{\ax,\bx ;\M, F}$, $n\in\sigma_F$, are eigenfunctions.

\begin{theorem}\label{th5.1} The polynomials $P_n^{\ax,\bx;\M, F}$, $n\in \sigma _F$,
are common eigenfunctions of the second order differential operator
\begin{align*}
D&=(1-x^2)\frac{d^2}{dx^2}+h_1(x)\frac{d}{dx}+h_0(x),\\\nonumber
h_1(x)&=\bx-\ax -(\ax+\bx+2n_F+2)x-2(1-x^2)\frac{(\Omega_{\M,F}^{\ax,\bx})'(x)}{\Omega_{\M,F}^{\ax,\bx}(x)},\\\nonumber
h_0(x)&=-\lambda^{\ax,\bx}(n_F) +[\ax-\bx+(2n_F+\ax +\bx)x]\frac{(\Omega_{\M,F}^{\ax,\bx})'(x)}{\Omega_{\M,F}^{\ax,\bx}(x)}+(1-x^2)\frac{(\Omega_{\M,F}^{\ax,\bx})''(x)}{\Omega_{\M,F}^{\ax,\bx}(x)}.
\end{align*}
More precisely $D(P_n^{\ax,\bx;\M, F})=-\lambda^{\ax,\bx}(n- u_F)P_n^{\ax,\bx;\M, F}(x)$.
\end{theorem}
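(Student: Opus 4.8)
The plan is to obtain the differential operator as a scaling limit of the second order difference operator of Theorem \ref{th3.3}, in exactly the way Jacobi polynomials arise from Hahn polynomials in (\ref{blmel}) and for which Lemma \ref{tol} is tailored. Concretely, I would start from the eigenvalue identity of Theorem \ref{th3.3}, written out as
\[
h_{-1}(x)h_n^{\ax,\bx,N;\M,F}(x-1)+h_0(x)h_n^{\ax,\bx,N;\M,F}(x)+h_1(x)h_n^{\ax,\bx,N;\M,F}(x+1)=\lambda^{\ax,\bx}(n-u_F)h_n^{\ax,\bx,N;\M,F}(x),
\]
and substitute $x\to (1-x)N/2$. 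Under this substitution the shift $x\mapsto x+1$ becomes the shift $x\mapsto x-2/N$ of the Jacobi variable (and $x\mapsto x-1$ becomes $x\mapsto x+2/N$), so by Lemma \ref{tol}, eq. (\ref{lim1t}), the term multiplying $h_{-1}$ behaves like $N^n\upsilon_n^F P_n^{\ax,\bx;\M,F}(x+2/N)$ and the one multiplying $h_1$ like $N^n\upsilon_n^F P_n^{\ax,\bx;\M,F}(x-2/N)$. Since all the rescaled polynomials $h_n^{\ax,\bx,N;\M,F}((1-x)N/2)/N^n$ have degree at most $n$ and converge uniformly on compacts, their coefficients converge, so I may Taylor-expand in the small parameter $2/N$ with controlled remainders.

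Next I would determine the large-$N$ asymptotics of the three coefficients under $x\to(1-x)N/2$. The ratios $\Omega_{\M,F}^{\ax,\bx,N}(x)/\Omega_{\M,F}^{\ax,\bx,N}(x+1)$ are handled by (\ref{lim2}): numerator and denominator are both $\sim N^{u_F+n_F}\upsilon_F$ times $\Omega_{\M,F}^{\ax,\bx}$ at the Jacobi variable shifted by $0$ and by $-2/N$, so the normalizing factors cancel and the ratio expands as $1+\tfrac{2}{N}(\Omega_{\M,F}^{\ax,\bx})'/\Omega_{\M,F}^{\ax,\bx}+O(1/N^2)$. The polynomial prefactors of $h_{\pm1}$ and of the two quadratic summands of $h_0$ each contribute a factor $\sim -N^2(1-x^2)/4$ at leading order. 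Assembling these, $h_{-1}$ and $h_1$ are each $\sim -N^2(1-x^2)/4$, while the two quadratic summands of $h_0$ together give $\sim +N^2(1-x^2)/2$, so the leading $O(N^2)$ contributions to the coefficient of $P_n^{\ax,\bx;\M,F}$ cancel.

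I would then collect terms by powers of $N$. Using $P_n^{\ax,\bx;\M,F}(x\pm2/N)=P_n^{\ax,\bx;\M,F}\pm\tfrac{2}{N}(P_n^{\ax,\bx;\M,F})'+\tfrac{2}{N^2}(P_n^{\ax,\bx;\M,F})''\pm\cdots$, the scaled equation becomes, after dividing by $N^n\upsilon_n^F$,
\[
\big(h_{-1}+h_0+h_1-\lambda^{\ax,\bx}(n-u_F)\big)P_n+\tfrac{2}{N}\big(h_{-1}-h_1\big)P_n'+\tfrac{2}{N^2}\big(h_{-1}+h_1\big)P_n''+\cdots=0,
\]
where I abbreviate $P_n=P_n^{\ax,\bx;\M,F}$. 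The $O(N^2)$ and $O(N)$ parts of the bracketed coefficients must be shown to vanish, leaving the finite limits $\tfrac{2}{N^2}(h_{-1}+h_1)\to-(1-x^2)$, $\tfrac{2}{N}(h_{-1}-h_1)\to$ the stated first-order coefficient up to sign, and the surviving $O(1)$ part of the zeroth coefficient. Multiplying the limiting relation by $-1$ produces the operator $(1-x^2)\frac{d^2}{dx^2}+h_1(x)\frac{d}{dx}+h_0(x)$ with the coefficients displayed in the statement and turns the eigenvalue $\lambda^{\ax,\bx}(n-u_F)$ into $-\lambda^{\ax,\bx}(n-u_F)$.

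The main obstacle is precisely this bookkeeping of subleading asymptotics: because the leading $O(N^2)$ terms cancel, one must expand every factor—the polynomial prefactors of $h_{-1},h_0,h_1$, the difference operator $\Delta$ appearing in $h_0$, and the ratios of $\Omega_{\M,F}^{\ax,\bx,N}$—to two orders beyond leading, and verify that the $O(N^2)$ and $O(N)$ remainders cancel identically so that the $O(1)$ survivors reproduce exactly $h_1(x)$ and $h_0(x)$. The ratio expansion used above is legitimate because (\ref{lim2}), together with the fact that all the rescaled determinants are polynomials of bounded degree (so convergence is coefficientwise and derivatives pass to the limit), governs its $1/N$ correction by $(\Omega_{\M,F}^{\ax,\bx})'/\Omega_{\M,F}^{\ax,\bx}$; the same boundedness controls the higher Taylor remainders uniformly on compacts. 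Once these expansions are in hand, the matching of coefficients with the stated $h_1(x)$ and $h_0(x)$ is routine.
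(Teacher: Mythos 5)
Your strategy is exactly the one the paper intends (the paper omits the proof, deferring to the analogous Theorem 5.1 of \cite{duch} via the basic limit (\ref{blmel})), and most of your execution plan is sound: the substitution $x\to(1-x)N/2$ turning the shifts $\Sh_{\pm1}$ into shifts by $\mp 2/N$ of the Jacobi variable, the coefficientwise convergence of the rescaled polynomials (which legitimizes Taylor expansion and the $1/N$-expansion of $\Omega_{\M,F}^{\ax,\bx,N}(x+1)/\Omega_{\M,F}^{\ax,\bx,N}(x)$ via (\ref{lim2})), and the identifications $\tfrac{2}{N^2}(h_{-1}+h_1)\to-(1-x^2)$ and $\tfrac{2}{N}(h_{-1}-h_1)\to 2(1-x^2)\tfrac{(\Omega_{\M,F}^{\ax,\bx})'}{\Omega_{\M,F}^{\ax,\bx}}+\ax-\bx+(\ax+\bx+2n_F+2)x$, which is indeed minus the stated first-order coefficient, are all correct.

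There is, however, a concrete gap in your treatment of the zeroth-order coefficient: $h_0$ of the difference operator in Theorem \ref{th3.3} is not built only from polynomial prefactors and ``ratios of $\Omega_{\M,F}^{\ax,\bx,N}$''; it contains the term $\Delta\bigl((x+\ax+n_F)(x-N-1+n_F)\Lambda_{\M,F}^{\ax,\bx,N}(x)/\Omega_{\M,F}^{\ax,\bx,N}(x)\bigr)$, where $\Lambda_{\M,F}^{\ax,\bx,N}$ (\ref{deflam}) is a \emph{different} determinant (columns $j=1,\dots,n_F+1$ with $j\neq n_F$), whose scaling asymptotics are given neither by Lemma \ref{tol} nor by (\ref{lim2}). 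This term is not negligible: the inner expression is $O(N^2)$ and $\Delta$ lowers the order by only one power of $N$, so it contributes at $O(N)$ to $h_{-1}+h_0+h_1$, and its next order contributes precisely the $(1-x^2)(\Omega_{\M,F}^{\ax,\bx})''/\Omega_{\M,F}^{\ax,\bx}$ and part of the $(\Omega_{\M,F}^{\ax,\bx})'/\Omega_{\M,F}^{\ax,\bx}$ terms in the stated $h_0$. What is needed is a two-term expansion of $\Lambda_{\M,F}^{\ax,\bx,N}$ under the scaling: column-reducing to finite differences gives $\Lambda_{\M,F}^{\ax,\bx,N}(x)=n_F\,\Omega_{\M,F}^{\ax,\bx,N}(x)+D_N(x)$, where $D_N$ is the determinant with difference orders $0,\dots,n_F-2,n_F$, and an induction as in the proof of Lemma \ref{tol} shows that $D_N((1-x)N/2)/N^{u_F+n_F-1}$ converges to a constant multiple of $(\Omega_{\M,F}^{\ax,\bx})'(x)$ (here one uses that replacing the top derivative order $n_F-1$ by $n_F$ in a Wronskian produces its derivative). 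Only with this extra limit lemma can you verify the cancellation of the $O(N)$ terms (the $\Lambda$-term supplies the piece $-n_FNx$ that balances the $O(N)$ remainders of the other four summands) and compute the surviving $O(1)$ part so as to match the stated $h_0$; with it, your plan goes through as described.
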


\begin{proof}
We omit the proof because proceeds as that of Theorem 5.1 in \cite{duch}, using the basic limit (\ref{blmel}).

\end{proof}

\subsection{Orthogonality of the polynomials $P_n^{\ax,\bx ;\M, F}$, $n\in\sigma_F$}
The key concept for the existence of a positive measure with respect to which the polynomials $(P_n^{\ax,\bx;\M,F})_n$ are orthogonal is that of admissibility (see Definition \ref{laad}). Admissibility is implied by the fact that  the polynomial $\Omega _{\M,F} ^{\ax,\bx}(x)$ does not vanish in the interval $[-1,1]$.

\begin{lemma}\label{pdf} Let $\ax,\bx$ be nonnegative integers with $\ax\le \bx$ and let $F$ be a finite set of positive integers satisfying (\ref{cis}).
If $\Omega_{\M,F}^{\ax,\bx}(x)\not =0$, $x\in[-1,1]$, then  $\ax$, $\bx$, $\M$ and $F$ are admissible.
\end{lemma}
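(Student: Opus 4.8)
The plan is to deduce admissibility, which by Definition \ref{laad} does not involve $N$, from the sign criterion in part~3 of Lemma \ref{l3.1}; I will verify that criterion for one conveniently large positive integer $N$ by transporting the non-vanishing of the Jacobi determinant $\Omega_{\M,F}^{\ax,\bx}$ on $[-1,1]$ to the Hahn determinant $\Omega_{\M,F}^{\ax,\bx,N}$ through the limit \eqref{lim2}.

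First I would record two consequences of the hypothesis. Since $\Omega_{\M,F}^{\ax,\bx}$ is a real polynomial with no zero on the compact interval $[-1,1]$, the intermediate value theorem forces it to keep a constant sign $\epsilon\in\{-1,+1\}$ there, and compactness yields $c:=\min_{x\in[-1,1]}|\Omega_{\M,F}^{\ax,\bx}(x)|>0$. Now I use that the convergence in \eqref{lim2},
\[
\frac{\Omega_{\M,F}^{\ax,\bx,N}\!\left((1-x)N/2\right)}{N^{u_F+n_F}}\longrightarrow \upsilon_F\,\Omega_{\M,F}^{\ax,\bx}(x),\qquad N\to\infty,
\]
is uniform on $[-1,1]$. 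Hence there is $N_0$ so that for every $N\ge N_0$ and every $x\in[-1,1]$ the left-hand side differs from $\upsilon_F\Omega_{\M,F}^{\ax,\bx}(x)$ by less than $|\upsilon_F|c/2$; as $|\upsilon_F\Omega_{\M,F}^{\ax,\bx}(x)|\ge|\upsilon_F|c$, the two quantities carry the same sign $\sign(\upsilon_F)\epsilon$. Multiplying by the positive factor $N^{u_F+n_F}$, I conclude that $\Omega_{\M,F}^{\ax,\bx,N}((1-x)N/2)$ has the constant sign $\sign(\upsilon_F)\epsilon$ for all $x\in[-1,1]$ and all $N\ge N_0$.

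Finally I would fix a positive integer $N\ge\max(N_0,-\ax)$ (so that Lemma \ref{l3.1} applies). For each integer $n$ with $0\le n\le N$ the point $x_n:=1-2n/N$ lies in $[-1,1]$ and satisfies $(1-x_n)N/2=n$, whence $\Omega_{\M,F}^{\ax,\bx,N}(n)$ carries the sign $\sign(\upsilon_F)\epsilon$. Since \eqref{cis} makes $n_F\ge -\ax\ge 1$, for $n=0,\dots,N-n_F$ both $n$ and $n+1$ lie in $[0,N]$, so
\[
\Omega_{\M,F}^{\ax,\bx,N}(n)\,\Omega_{\M,F}^{\ax,\bx,N}(n+1)>0,\qquad n=0,\dots,N-n_F,
\]
which is precisely part~3 of Lemma \ref{l3.1}. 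That lemma then gives the admissibility of $\ax,\bx,\M,F$, completing the argument. The one delicate point is that the determinant is sampled at the $N$-dependent nodes $x_n=1-2n/N$: this is exactly why the \emph{uniform} convergence in \eqref{lim2}, together with the strict lower bound $c>0$ afforded by non-vanishing on the \emph{closed} interval, is indispensable, as it converts the single sign of $\Omega_{\M,F}^{\ax,\bx}$ on $[-1,1]$ into a uniform sign of $\Omega_{\M,F}^{\ax,\bx,N}$ at every integer of $[0,N]$ simultaneously.
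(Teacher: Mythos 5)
Your proof is correct, and it takes a genuinely more direct route than the paper's. Both arguments rest on the same two pillars, namely the equivalence of admissibility with the sign condition in part 3 of Lemma \ref{l3.1} and the uniform limit (\ref{lim2}), but the logic is organized differently. The paper argues by \emph{reductio ad absurdum}: if admissibility fails, then for every large $N$ the Hahn determinant $\Omega_{\M,F}^{\ax,\bx,N}$ changes sign on the integers of $[0,N-n_F+1]$, hence has a zero $z_N\in[0,N]$; the paper then extracts a subsequence with $z_{N_k}/N_k$ convergent and combines the mean value theorem with a uniform bound on the scaled derivatives $(\Omega_{\M,F}^{\ax,\bx,N})'(y)/N^{s-1}$, $y\in[0,N]$, to produce a zero of $\Omega_{\M,F}^{\ax,\bx}$ in $[-1,1]$, contradicting the hypothesis. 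You instead argue forward: compactness gives the strict lower bound $c>0$ for $\vert\Omega_{\M,F}^{\ax,\bx}\vert$ on $[-1,1]$, the uniform convergence in (\ref{lim2}) then forces $\Omega_{\M,F}^{\ax,\bx,N}((1-x)N/2)$ to carry one constant nonzero sign for all $x\in[-1,1]$ --- in particular at every integer of $[0,N]$ --- for a single sufficiently large $N$ with $-N\le\ax$, and part 3 $\Rightarrow$ part 2 of Lemma \ref{l3.1} concludes. Your version is more elementary: it needs no derivative estimates, no mean value theorem and no subsequence extraction, only sup-norm convergence plus the lower bound on the closed interval. The two delicate points --- that $n$ and $n+1$ both lie in $[0,N]$ for $n\le N-n_F$ (guaranteed by $n_F\ge-\ax\ge1$, which follows from (\ref{cis})), and that the chosen $N$ satisfies the hypothesis $-N\le\ax$ of Lemma \ref{l3.1} --- you handle explicitly. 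What makes your shortcut legitimate is the observation, which the paper's contrapositive formulation obscures, that admissibility (Definition \ref{laad}) does not involve $N$, so verifying the sign criterion for one admissible value of $N$ already yields the conclusion.
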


\begin{proof}
Consider the polynomial $\Omega_{\M,F}^{\ax,\bx,N}$ (\ref{defom}), and write $s=u_F+n_F$ for its degree.
As a consequence of (\ref{lim2}), we deduce that the sequence of derivatives
$$
\left(\frac{(\Omega_{\M,F}^{\ax,\bx,N})'((1-x)N/2)}{N^{s-1}}\right)_N
$$
is uniformly bounded when $x\in[-1,1]$. Since when $x$ runs in $[-1,1]$, the number $(1-x)N/2$ runs in $[0,N]$, we can find a positive constant $C>0$ such that
\begin{equation}\label{ppat}
\sup _{y\in[0,N]}\left\vert \frac{(\Omega_{\M,F}^{\ax,\bx,N})'(y)}{N^{s-1}} \right\vert\le C.
\end{equation}
We now proceed by \textit{reductio to absurdum}. Assume then that $\ax$, $\bx$, $\M$ and $F$ are not admissible. According to the part 3 of Lemma \ref{l3.1}, for $N\in \NN$ big enough, there exists $n$, $0\le n\le N-n_F$, such that $\Omega_{\M,F}^{\ax,\bx,N}(n)\Omega_{\M,F}^{\ax,\bx,N}(n+1)<0$. Hence there exists $z_N$, $0\le z_N\le N$ such that $\Omega_{\M,F}^{\ax,\bx,N}(z_N)=0$. Since $0\le z_N/N\le 1$, we can find a sequence $N_k$ such that $N_k\to +\infty$ and
$z_{N_k}/N_k\to \zeta\in[0,1]$ as $k\to +\infty$. Write $z_{N_k}=(1-x_{N_k})N_k/2$, so that $x_{N_k}=1-2z_{N_k}/N_k\to \theta=1-2\zeta\in[-1,1]$ as $k\to +\infty$. Hence, by applying the mean value theorem, we have
\begin{align*}
\vert \Omega_{\M,F}^{\ax,\bx,N_k}((1-\theta)N_k/2)\vert&=\vert \Omega_{\M,F}^{\ax,\bx,N_k}((1-x_{N_k})N_k/2)-\Omega_{\M,F}^{\ax,\bx,N_k}((1-\theta)N_k/2)\vert\\
&=\frac{N_k}{2}\vert (\Omega_{\M,F}^{\ax,\bx,N_k})'(y)(x_{N_k}-\theta)\vert,
\end{align*}
for certain $y\in(z_{N_k},(1-\theta)N_k/2)\subset [0,N_k]$. Using (\ref{ppat}), we get
$$
\frac{\vert \Omega_{\M,F}^{\ax,\bx,N_k}((1-\theta)N_k/2)\vert}{N^s}\le C\frac{\vert x_{N_k}-\theta\vert}{2}.
$$
Taking limit when $k$ goes to $+\infty$, we deduce using (\ref{lim2}) that $\Omega_{\M,F}^{\ax,\bx}(\theta)=0$ with $\theta\in[-1,1]$,
which it is a contradiction

\end{proof}

We guess that the converse of Lemma \ref{pdf} is also true and proposed it as a conjecture  in the Introduction. We have a lot of
computational support for it but no proof yet.

The fact that
\begin{equation}\label{ona}
\Omega_{\M,F}^{\ax,\bx}(x)\not =0,\quad x\in[-1,1],
\end{equation}
implies the existence of a positive weight which respect to which the  polynomials $P_n^{\ax,\bx;\M,F}$, $n\in \sigma_F$, are orthogonal and complete with respect to a positive weight in $[-1,1]$.

\begin{theorem}\label{th5.6} Let $\ax$ and $\bx$ be two negative integers and a positive integer
satisfying $\ax\le \bx\le -1$, and let $F$ be a finite set of positive integers such that (\ref{cis}) holds.
If we also assume (\ref{ona}), then
the  polynomials $P_n^{\ax,\bx;\M,F}$, $n\in \sigma_F$, are orthogonal and complete with respect to the positive weight
$$
\omega_{\ax,\bx;\M, F} =\frac{(1-x)^{\ax +n_F}(1+x)^{\bx+n_F}}{(\Omega_{\M, F}^{\ax,\bx}(x))^2},\quad x\in[-1,1].
$$
Moreover for $n\not \in F$
$$
\Vert P_{n+u_F}^{\ax,\bx;\M,F} \Vert_2=\frac{2^{\ax+\bx+1}\varrho^\M_n(\prod_{f\in F}(n-f))(\prod_{u\in U_F}(n+u+1))\prod_{i=1}^{-\ax}(n+\ax+\bx+i)}{2n+\ax+\bx+1},
$$
where
$$
\varrho^\M_n=\begin{cases}-\frac{(1-M_{-\bx-1-n})^2}{M_{-\bx-1-n}},& \mbox{if $0\le n\le -\bx-1$},\\ 1,&\mbox{otherwise}.\end{cases}
$$
\end{theorem}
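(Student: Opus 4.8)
The plan is to split the statement into three parts --- mutual orthogonality of the $P_n^{\ax,\bx;\M,F}$, the explicit value of their norms, and completeness --- and to treat each with a different tool. Orthogonality I would read off directly from the second order differential operator of Theorem \ref{th5.1}; the explicit norms I would obtain by passing to the limit $N\to+\infty$ in the corresponding Hahn statement (Theorem \ref{th4.5}) through Lemma \ref{tol}; and completeness, which is the genuinely delicate point, I would reduce to a density statement in $L^2$. Throughout I use that (\ref{ona}) forces admissibility (Lemma \ref{pdf}), and that because (\ref{cis}) gives $n_F\ge -\ax$, both exponents $\ax+n_F$ and $\bx+n_F$ are $\ge 0$; in particular the weight $\omega_{\ax,\bx;\M,F}$ is nonnegative and, since $\Omega_{\M,F}^{\ax,\bx}$ does not vanish on the compact set $[-1,1]$, it is comparable there to the Jacobi weight $(1-x)^{\ax+n_F}(1+x)^{\bx+n_F}$.

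For orthogonality I would first check that the operator $D$ of Theorem \ref{th5.1} is formally symmetric with respect to $\omega_{\ax,\bx;\M,F}$. Writing $\omega=\omega_{\ax,\bx;\M,F}$ and $\Omega=\Omega_{\M,F}^{\ax,\bx}$, a short computation shows that $[(1-x^2)\omega]'/\omega$ equals exactly the coefficient $h_1$ displayed in Theorem \ref{th5.1}, so that $Df=\omega^{-1}\big[(1-x^2)\omega f'\big]'+h_0f$ is in Sturm--Liouville form. Hence for polynomials $f,g$ the difference $\langle Df,g\rangle_\omega-\langle f,Dg\rangle_\omega$ reduces to the boundary term $\big[(1-x^2)\omega\,(f'g-fg')\big]_{-1}^{1}$, which vanishes because $(1-x^2)\omega=(1-x)^{\ax+n_F+1}(1+x)^{\bx+n_F+1}/\Omega^2$ has both endpoint exponents $\ge 1$ while $\Omega(\pm1)\ne0$. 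Thus $D$ is symmetric on polynomials and eigenfunctions attached to distinct eigenvalues are orthogonal. It remains to see that the eigenvalues $-\lambda^{\ax,\bx}(n-u_F)$ are pairwise distinct for $n\in\sigma_F$: writing $k=n-u_F\in\NN\setminus F$, two of them coincide only if $k+k'=-\ax-\bx-1$ with $k\ne k'$, and then the larger index satisfies $-\bx\le k'\le-\ax-\bx-1$ (here I use $\ax\le\bx$), so that $k'\in\{-\bx,\cdots,-\ax-\bx-1\}\subset F$ by (\ref{cis}), a contradiction. This yields orthogonality for every pair.

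For the norms I would use the limit (\ref{lim1t}) of Lemma \ref{tol}, setting $x\to(1-x)N/2$ in the Hahn orthogonality of Theorem \ref{th4.5}. Under this substitution Lemma \ref{tol} gives $h_{n+u_F}^{\ax,\bx,N;\M,F}\big((1-x)N/2\big)\sim \upsilon_{n+u_F}^{F}P_{n+u_F}^{\ax,\bx;\M,F}(x)\,N^{n+u_F}$, while (\ref{lim2}) controls $\Omega_{\M,F}^{\ax,\bx,N}$ at the two arguments $(1-x)N/2$ and $(1-x)N/2+1$, and Stirling's asymptotics turns the two binomial coefficients in $\omega_{\ax,\bx,N}^{\M,F}$ into $(1-x)^{\ax+n_F}(1+x)^{\bx+n_F}$ up to explicit powers of $N$ and $2$. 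Collecting these, the rescaled discrete measures converge weakly on $[-1,1]$ to a constant multiple of $\omega_{\ax,\bx;\M,F}(x)\,dx$, so that the diagonal term of the Hahn orthogonality divided by the appropriate power of $N$ converges to $\|P_{n+u_F}^{\ax,\bx;\M,F}\|_2^2$. Matching this against the limit of the explicit Hahn norm (\ref{nomh}) then produces the stated closed form, with $\varrho_n^\M$ coming from the mass $\nu_{a,b,\Nn}^{\M}(n+\ax+\bx)$ and from the quantities $\tau_n,\zeta_{n+\ax+\bx},\theta_n^\M$. The one technical care needed here is the behaviour near $x=\pm1$, where Stirling is not uniform; this is handled by the usual splitting of $[-1,1]$ into a bulk and two shrinking endpoint layers, using the integrability of $\omega_{\ax,\bx;\M,F}$ (guaranteed by $\ax+n_F,\bx+n_F\ge0$) to control the tails.

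Completeness is the hard part. Since $\omega_{\ax,\bx;\M,F}$ is comparable on $[-1,1]$ to a Jacobi weight with both exponents $>-1$, polynomials are dense in $L^2(\omega_{\ax,\bx;\M,F})$, and it remains to prove that the orthogonal complement of $\{P_n^{\ax,\bx;\M,F}:n\in\sigma_F\}$ is trivial. The natural device is the unitary map $f\mapsto f/\Omega_{\M,F}^{\ax,\bx}$ from $L^2(\omega_{\ax,\bx;\M,F})$ onto $L^2\big((1-x)^{\ax+n_F}(1+x)^{\bx+n_F}\big)$, which sends the complete classical basis $P_k^{\ax+n_F,\bx+n_F}$ to the complete system $\Omega_{\M,F}^{\ax,\bx}\,P_k^{\ax+n_F,\bx+n_F}$; one then has to identify the closed span of the $P_n^{\ax,\bx;\M,F}$ with the whole space by accounting for the finite defect produced by the gaps $\{u_F+f:f\in F\}$. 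This is exactly where the approach of \cite{duha} does not transfer, since the Christoffel-transform machinery used there for the standard families is unavailable for the present non-Christoffel Krall dual Hahn families; I therefore expect this identification --- equivalently, the self-adjoint realization of $D$ on $[-1,1]$ having the $P_n^{\ax,\bx;\M,F}$ as a complete set of eigenfunctions --- to be the main obstacle, and the step on which I would spend most effort.
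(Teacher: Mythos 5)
Your first two parts are sound and essentially coincide with the paper's own argument. For orthogonality the paper invokes Lemma 2.6 of \cite{duch} to get symmetry of the operator $D$ of Theorem \ref{th5.1} with respect to the pair $(\omega_{\ax,\bx;\M,F},\Aa)$, while you verify the Sturm--Liouville form $\omega^{-1}[(1-x^2)\omega f']'+h_0f$ and the vanishing of the boundary terms directly; your check that the eigenvalues $-\lambda^{\ax,\bx}(n-u_F)$, $n\in\sigma_F$, are pairwise distinct (using $\ax\le\bx$ and (\ref{cis})) is a detail the paper asserts without proof, and your argument for it is correct. For the norms, your plan --- substitute $x\to(1-x)N/2$ in the orthogonality of Theorem \ref{th4.5}, use Lemma \ref{tol} and (\ref{lim2}), interpret the rescaled total masses as Riemann sums, and match against the limit of (\ref{nomh}) --- is exactly the paper's proof; the paper additionally uses Hurwitz's theorem to select a subsequence $N_k$ along which $\Omega_{\M,F}^{\ax,\bx,N_k}$ does not vanish at the relevant points, a precaution you should include. (Your worry about non-uniformity of Stirling near $x=\pm1$ is unnecessary: since $\ax+n_F,\bx+n_F$ are nonnegative integers, the two binomial coefficients are polynomials in $(1-x)N/2$, so the limit (\ref{lm3}) is uniform on all of $[-1,1]$.)

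The genuine gap is completeness, and you say so yourself: you describe the map $f\mapsto f/\Omega_{\M,F}^{\ax,\bx}$ and then declare the identification of the closed span to be the main obstacle on which you would spend most effort --- that is a statement of the difficulty, not a proof. Moreover, your reduction starts from the density of \emph{all} polynomials in $L^2(\omega_{\ax,\bx;\M,F})$, which by itself is useless here, because the exceptional family spans only a proper subspace of the polynomials (the degrees $u_F+f$, $f\in F$, are absent). The idea you are missing is the one the paper uses. Let $\Aa$ be the linear span of the $P_n^{\ax,\bx;\M,F}$, $n\in\sigma_F$, and let $\Bb=\{(\Omega_{\M,F}^{\ax,\bx})^2p:\ p\in\PP\}$. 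Then (i) $\Bb$ is dense in $L^2(\omega_{\ax,\bx;\M,F})$: for $f\in L^2(\omega_{\ax,\bx;\M,F})$ the function $f/(\Omega_{\M,F}^{\ax,\bx})^2$ lies in the Jacobi space $L^2((1-x)^{\ax+n_F}(1+x)^{\bx+n_F})$ (here (\ref{ona}) guarantees $\Omega_{\M,F}^{\ax,\bx}$ is bounded above and below on $[-1,1]$), polynomials are dense there, and multiplying such a polynomial approximation by $(\Omega_{\M,F}^{\ax,\bx})^2$ yields an approximation of $f$ by elements of $\Bb$ in $L^2(\omega_{\ax,\bx;\M,F})$; and (ii) $\Bb\subset\Aa$, i.e.\ every polynomial of the form $(\Omega_{\M,F}^{\ax,\bx})^2p$ is a \emph{finite linear combination} of the exceptional polynomials --- this purely algebraic fact about the Wronskian-type determinants is Lemma 1.1 of \cite{duco} (the paper writes the inclusion as $\Aa\subset\Bb$, but the direction that is needed, and that \cite{duco} provides, is $\Bb\subset\Aa$). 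Together (i) and (ii) give density of $\Aa$, i.e.\ completeness. Without step (ii) --- the statement that the factor $(\Omega_{\M,F}^{\ax,\bx})^2$ exactly compensates the missing degrees --- your outline cannot be closed, so as it stands the completeness claim remains unproved in your proposal.
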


\begin{proof}
Note that the assumption (\ref{cis}) implies that $\ax+n_F,\bx+n_F\ge 0$, and then
the positive weight $\omega_{\ax,\bx;\M, F}\in L^1([-1,1])$.

Since the proof is similar to other cases of exceptional polynomials, we only sketch it.

We first prove the identity for the $L^2$-norm of the polynomial $P_{n+u_F}^{\ax,\bx;\M,F}$, $n\not \in F$.

Fixed a nonnegative integer $n\not \in F$, i.e. $n+u_F\in \sigma_F$, since $\ax$, $\bx$, $\M$ and $F$ are admissible, we consider the positive measure defined by
\begin{equation}\label{defmt}
\tau _{N}=\sum _{x=0}^{N-n_F}\frac{\binom{\ax+n_F+x}{x}\binom{\bx+N-x}{N-n_F-x}(h_{n+u_F}^{\ax,\bx,N;\M,F}(x))^2}{\Omega _{\M,F}^{\ax,\bx,N}(x)\Omega_{\M,F}^{\ax,\bx,N}(x+1)}\delta {y_{N,x}},
\end{equation}
where
\begin{equation}\label{defya}
y_{N,x}=1-2x/N.
\end{equation}

We need the following limits
\begin{align}\label{lm1}
\lim _{N\to +\infty}\frac{\Omega _{\M,F}^{\ax,\bx,N}((1-x)N/2)}{N^{u_F+n_F}}&=\upsilon_{F}\Omega _{\M,F}^{\ax,\bx} (x),\\\label{lm11}
\lim _{N\to +\infty}\frac{\Omega _{\M,F}^{\ax,\bx,N}((1-x)N/2+1)}{N^{u_F+n_F}}&=\upsilon_{F}\Omega _{\M,F}^{\ax,\bx} (x),\\\label{lm2}
\lim _{N\to +\infty}\frac{h _{n+u_F}^{\ax,\bx,N;\M,F}((1-x)N/2)}{N^{n+u_F}}&=\upsilon^{F}_{n+u_F}P_{n+u_F}^{\ax,\bx;\M,F}(x),\\\label{lm3}
\lim _{N\to +\infty}\frac{\binom{\ax+n_F+(1-x)N/2}{(1-x)N/2}\binom{\bx+N-(1-x)N/2}{N-n_F-(1-x)N/2}}{N^{\ax+\bx+2n_F}}&=\frac{(1-x)^{\ax+n_F}
(1+x)^{\bx+n_F}}{c},
\end{align}
uniformly in the interval $[-1,1]$, where $\upsilon^{F}_{n}$ and $\upsilon_{F}$ are defined by (\ref{defupn}) and (\ref{upn0}), respectively, and $c$ is given by
\begin{equation}\label{defc}
c=2^{\ax+\bx+2n_F}
(\ax+n_F)!(\bx+n_F)!.
\end{equation}
The  first limit is (\ref{lim2}). The second one is a consequence of the firs step in the proof of Lemma \ref{tol}. The third one is (\ref{lim1t}). The forth one is consequence of the asymptotic behavior of $\Gamma(z+u)/\Gamma(z+v)$ when $z\to\infty$ (see \cite{EMOT}, vol. I (4), p. 47).

Since $\Omega _{\M,F}^{\ax,\bx} $ does not vanish in $[-1,1]$, applying Hurwitz's Theorem to the limits (\ref{lm1}) and (\ref{lm11}) we can choose a sequence $N_k$ of positive integers such that $N_k\to +\infty$ as $k\to \infty$ and  $\Omega _{\M,F}^{\ax,\bx,N_k}((1-x)N_k/2)\Omega _{\M,F}^{\ax,\bx,N_k}((1-x)N_k/2+1)\not =0$, $x\in [-1,1]$.

Hence, using (\ref{defupn}) and (\ref{upn0}), we can combine the limits (\ref{lm1}), (\ref{lm11}), (\ref{lm2}) and (\ref{lm3}) to get
\begin{equation}\label{lm4}
\lim _{k\to +\infty}H_{N_k}(x)=\frac{4^{n_F}}{c}H(x),\quad \mbox{uniformly in $[-1,1]$, where}
\end{equation}

\begin{align*}
H_{N_k}(x)&=\frac{\binom{\ax+n_F+(1-x)N_k/2}{(1-x)N_k/2}\binom{\bx+N_k-(1-x)N_k/2}{N_k-n_F-(1-x)N_k/2}(h_{n+u_F}^{\ax,\bx,N_k;\M,F}((1-x)N_k/2))^2}{N^{\ax+\bx+2n}\Omega _{\M,F}^{\ax,\bx,N_k}((1-x)N_k/2)\Omega_{\M,F}^{\ax,\bx,N_k}((1-x)N_k/2+1)},\\
H(x)&=\frac{(1-x)^{\ax+n_F}(1+x)^{\bx+n_F}(P_{n+u_F}^{\ax,\bx;\M,F})^2(x)}
{(\Omega_{\M,F}^{\ax,\bx})^2(x)}.
\end{align*}

We now prove that
\begin{equation}\label{lm5}
\lim _{k\to +\infty}\frac{2\tau _{N_k}([-1,1])}{N_k^{\ax+\bx+n+1}}=\frac{4^{n_F}}{c}\int_{-1}^1H(x)dx.
\end{equation}
To do that, write $I_{N_k}=\{ l\in \NN: 0\le l\le N_k\}$, ordered in decreasing size.
The numbers $y_{N_k,l}$, $l\in I_{N_k}$, form a partition of the interval $[-1,1]$ with $y_{N_k,l+1}-y_{N_k,l}=2/N_k$ (see (\ref{defya})). Since the function $H$ is continuous  in the interval $[-1,1]$, we get that
$$
\int_{-1}^1H(x)dx=\lim_{k\to +\infty}S_{N_k},
$$
where $S_{N_k}$ is the Cauchy sum
$$
S_{N_k}=\sum_{l\in I_{N_k}}H(y_{N_k,l})(y_{N_k,l+1}-y_{N_k,l}).
$$
On the other hand, since $l\in I_{N_k}$ if and only if $-1\le y_{N_k,l}\le 1$ (\ref{defya}), we get
\begin{align*}
\frac{2\tau _{N_k}([-1,1])}{N_k^{\ax+\bx+n+1}}&=\frac{2}{N_k^{\ax+\bx+n+1}}\sum _{l\in I_{N_k}}\frac{\binom{\ax+n_F+l}{l}\binom{\bx+N_k-l}{N_k-n_F-l}(h_{u_F}^{\ax,\bx,N_k;\M,F})^2(l)}{\Omega _{\M,F}^{\ax,\bx,N_k}(l)\Omega _{\M,F}^{\ax,\bx,N_k}(l)}\\
&=\frac{2}{N_k}\sum _{l\in I_{N_k}}H_{N_k}(y_{N_k,l})=\sum _{l\in I_{N_k}}H_{N_k}(y_{N_k,l})(y_{N_k,l+1}-y_{N_k,l}).
\end{align*}
The limit (\ref{lm5}) now follows from the uniform limit (\ref{lm4}).

The formula for the $L^2$-norm of the polynomial $P_{n+u_F}^{\ax,\bx;\M,F}$ follows now by using
the formula for the $L^2$-norm of the polynomial $h_{n+u_F}^{\ax,\bx,N;\M,F}$ (\ref{nomh}), the definition of the measure $\nu_{a,b,\Nn}^{\M, F}$ (\ref{lanu}), and some careful computations.

\medskip

Write $\Aa $ for the linear space generated by the polynomials $P_n^{\ax,\bx ;\M,F}$, $n\in \sigma _F$.
One can check, using Lemma 2.6 of \cite{duch}, that the second order differential operator $D$ in Theorem \ref{th5.1} is symmetric with respect to the pair $(\omega _{\ax,\bx;\M,F}, \Aa)$ (in the sense that
$$
\langle D(p),q\rangle_{\omega _{\ax,\bx;\M,F}}=\langle p,D(q)\rangle_{\omega _{\ax,\bx;\M,F}},\quad p,q\in \Aa).
$$
Since the polynomials $P_n^{\ax,\bx;\M,F}$, $n\in \sigma _F$, are eigenfunctions of $D$ with different eigenvalues, Lemma 2.4 of \cite{duch} implies that they are orthogonal with respect to $\omega _{\ax,\bx ;\M,F}$.

In order to prove the completeness of $P_n^{\ax,\bx,;\M,F}$, $n\in \sigma _F$, we proceed in two steps.

\bigskip
\noindent
\textsl{Step 1.} Write $\Bb=\{(\Omega _{\M,F}^{\ax,\bx}(x))^2p: p\in \PP\}$. Then $\Bb$ is dense in $L^2(\omega _{\ax,\bx;\M,F})$.

Take a function $f\in L^2(\omega _{\ax,\bx;\M,F})$ and  define the function
$$
g(x)=f(x)/(\Omega _{\M,F}^{\ax,\bx})^2\in L^2((1-x)^{\ax+n_F}(1+x)^{\bx+n_F}).
$$
Given $\epsilon >0$, since the polynomials are dense in $L^2((1-x)^{\ax+n_F}(1+x)^{\bx+n_F})$, there exists a polynomial $p$ such that
\begin{equation}\label{dmp}
\int_{-1}^1 \vert g(x)-p(x)\vert ^2(1-x)^{\ax+n_F}(1+x)^{\bx+n_F}dx<\epsilon.
\end{equation}
We then have
\begin{align*}
\int_{-1}^1& \vert g(x)-p(x)\vert ^2(1-x)^{\ax+n_F}(1+x)^{\bx+n_F}dx\\&=\int_{-1}^1  \left\vert f(x)-(\Omega _{\M,F}^{\ax,\bx}(x))^2p(x)\right\vert ^2\omega _{\ax,\bx;\M,F}dx .
\end{align*}
Using (\ref{dmp}), we can conclude that $\Bb$ is dense in $L^2(\omega _{\ax,\bx;\M,F})$.

\bigskip
\noindent
\textsl{Step 2.} $\Aa\subset \Bb$.

This step can be proved as Lemma 1.1 in \cite{duco}.

The Theorem follows now from the first step.

\end{proof}

We finish this Section comparing the exceptional Jacobi polynomials constructed in this paper with the exceptional Legendre polynomials introduced in \cite{xle}.

In \cite{xle} the authors construct the exceptional Legendre families by the application of a finite number of confluent Darboux transformations to the Legendre second order differential operator. The approach is hence completely different to the one used here. For a $n$-tuple $\mathbf{m}=(m_1,\cdots, m_n)$, they associated $n$ real parameters $\mathbf{t_m}=\{t_1,\cdots ,t_n\}$ (which play the role of our set of parameters $\M$) and define a sequence $P_{\mathbf{m};i}(z;\mathbf{t_m})$, $i\in \NN\setminus J$, of polynomials, where $J$ is certain finite set of nonnegative integers. Their definition of the exceptional Legendre polynomial $P_{\mathbf{m};i}(z;\mathbf{t_m})$ neither use Wronskian nor
the polynomials $\Pp_n^{\ax,\bx;\M,F}$ introduced in Definition \ref{losj}.

Instead of that, they proceed as follows.
Define the $n\times n$ matrix polynomial
$$
\mathcal{R}_{\mathbf{m}}(z;\mathbf{t_m})=\left(
  \begin{array}{@{}c@{}lccc@{}c@{}}
    & &&\hspace{-.4cm}{}_{1\le l\le n} \\
    \dosfilas{\delta_{k,l}+t_{m_l}R_{m_k,m_l}(z) }{1\le k\le n} \\
      \end{array}
  \hspace{-.3cm}\right),
$$
where
$$
R_{m_k,m_l}(z)=\int _{-1}^zP_{m_k}(u)P_{m_l}(u)du,
$$
and $P_i$ denote the classical Legendre polynomials. They denote by $\tau _{\mathbf{m}}(z;\mathbf{t_m})$ the determinant of
$\mathcal{R}_{\mathbf{m}}(z;\mathbf{t_m})$ and define the $n$-tuple of polynomials
$$
\mathbf{Q_m}^T(z;\mathbf{t_m})=\tau _{\mathbf{m}}(z;\mathbf{t_m})\mathcal{R}_{\mathbf{m}}(z;\mathbf{t_m})^{-1}(P_{m_1}(z),\cdots,P_{m_n}(z))^T,
$$
and finally
$$
P_{\mathbf{m};i}(z;\mathbf{t_m})=[\mathbf{Q}_{(\mathbf{m},i)}(z;\mathbf{t}_{(\mathbf{m},i)})]_{n+1},
$$
where $(\mathbf{m},i)=(m_1,\cdots, m_n,i)$ and $\mathbf{t}_{(\mathbf{m},i)}=\{t_1,\cdots ,t_n,t_i\}$
(see \cite[Definition 3]{xle}).

They prove that these polynomials $(P_{\mathbf{m};i}(z;\mathbf{t_m}))_i$ are exceptional Legendre polynomials, that is, they are eigenfunctions of a second order differential operator, and under mild conditions on the parameters $\mathbf{t_m}$, they are also orthogonal in $[-1,1]$ with respect to the positive weight
$$
\frac{1}{\tau _{\mathbf{m}}(z;\mathbf{t_m})^2}.
$$
As one can see this definition is completely different to our Definition \ref{jll}. In fact, we have not been able to prove that the exceptional polynomials constructed in \cite{xle} are particular cases of our exceptional Jacobi polynomials, although we have plenty of computation evidence showing that this is the case.

For instance, using Maple, we have been able to check that for a positive integer $m_1$, the one parametric exceptional Legendre family in \cite[Section 4.1]{xle}
correspond to our polynomials $P_n^{\ax,\bx;\M,F}$, where
\begin{equation}\label{gpe}
\ax=\bx=-m_1-1,\quad F=\{1,\cdots, m_1,2m_1+1\},\quad M_{m_1}=\frac{2m_1+1}{2m_1+1+2t_{m_1}}.
\end{equation}
Notice that the finite set $F$ of positive integers in (\ref{gpe}) does not satisfy (\ref{cis}), but using the Remark \ref{jlp1}, one can check that the exceptional Legendre polynomials associated to $m_1$ are actually  the particular case of
$$
\ax=\bx=-m_1-1,\quad F=\{m_1+1,\cdots,2m_1+1\},
$$
when $M_i=1$, $i=0,\cdots , m_1-1,$ and $M_{m_1}=\frac{2m_1+1}{2m_1+1+2t_{m_1}}$.

\section{The assumption $\{-\bx,\cdots,-\ax-\bx-1\}\subset F$}\label{secu}
When the assumption $\{-\bx,\cdots,-\ax-\bx-1\}\subset F$ (\ref{cis}) on the finite set of positive integers $F$ does not hold, one can still associated to $F$ the sequence of polynomials $h_n^{\ax,\bx,N;\M,F}$ or $P_n^{\ax,\bx;\M,F}$ as in Definitions \ref{dll} or \ref{jll}, respectively. Although we have proved that they are eigenfunctions of a second order difference or differential operator only when $F$ satisfies (\ref{cis}), the result seems to be always true. In fact, using \cite[Remar 5.3]{dundh}, the duality in Lemma \ref{lem3.2} can be extended for many other sets $F$ which do not satisfy (\ref{cis}), and then Theorems \ref{th3.3} and \ref{th5.1} are also true for these sets $F$.

However, there are a number of reasons showing that the case when $F$ does not satisfy (\ref{cis}) is not very much interesting.

\begin{enumerate}
\item The assumption (\ref{cis}) implies that $0\le \ax+n_F\le \bx+n_F$, and this is a necessary condition for defining the measures with respect to which our exceptional Hahn and Jacobi families are orthogonal. Hence if $F$ does not satisfy (\ref{cis}) and either  $\ax+n_F<0$ or $\bx+n_F<0$, these measures can not be defined.
\item Some of the cases when (\ref{cis}) fails show other kind of degenerateness which again imply that the measures with respect to which our exceptional Hahn and Jacobi families are orthogonal are not defined (for instance because $\Omega_{\M,F}^{\ax,\bx,N}(n)=0$, for some $n=0,\cdots , N-n_F+1$, or because $\Omega_{\M,F}^{\ax,\bx}(\pm 1)=0$).
\item And moreover, we guess that when $F$ does not satisfy (\ref{cis}) and none of the above degenerateness happens, then the exceptional families defined from $F$ are particular cases of exceptional families defined from a set $\tilde F$ satisfying (\ref{cis}). For instance, as noticed above (\ref{gpe}), this happens
   for the sets  $F=\{1,\cdots, m_1,2m_1+1\}$, $m_1$ being a positive integer, which corresponds to the exceptional Legendre polynomials introduced in \cite{xle}.
\end{enumerate}

\bigskip

\noindent
\textit{Mathematics Subject Classification: 42C05, 33C45, 33E30}

\noindent
\textit{Key words and phrases}: Orthogonal polynomials. Exceptional orthogonal polynomial.
Hahn polynomials. Jacobi polynomials. Krall discrete polynomials.

 \end{document}